\documentclass[11pt]{article}
\usepackage{amsfonts,mathrsfs,amssymb,amsthm,mathptm}
\usepackage{amsmath,amscd}
\usepackage{mathptm,pslatex}
\usepackage[all]{xy}
\usepackage{graphicx}
\usepackage{fancyhdr}
\usepackage{hyperref}
\usepackage{xcolor}
\usepackage{bm}
\pagecolor[rgb]{0.9, 0.99, 0.9}
\begin{document}
\newtheorem{Def}{Definition}[section]
\newtheorem{Ex}[Def]{Example}
\newtheorem{Prop}[Def]{Proposition}
\newtheorem{Theo}[Def]{Theorem}
\newtheorem{Lem}[Def]{Lemma}
\newtheorem{Coro}[Def]{Corollary}
\theoremstyle{definition}
\newtheorem{Rem}[Def]{Remark}

\newcommand{\add}{{\rm add}}
\newcommand{\gd}{{\rm gl.dim}}
\newcommand{\dm}{{\rm dom.dim}}
\newcommand{\E}{{\rm E}}
\newcommand{\Mor}{{\rm Morph}}
\newcommand{\End}{{\rm End}}
\newcommand{\ind}{{\rm ind}}
\newcommand{\rsd}{{\rm res.dim}}
\newcommand{\rd} {{\rm rep.dim}}
\newcommand{\ol}{\overline}
\newcommand{\overpr}{$\square$}
\newcommand{\rad}{{\rm rad}}
\newcommand{\soc}{{\rm soc}}
\renewcommand{\top}{{\rm top}}
\newcommand{\pd}{{\rm proj.dim}}
\newcommand{\id}{{\rm inj.dim}}
\newcommand{\fld}{{\rm flat.dim}}
\newcommand{\Fac}{{\rm Fac}}
\newcommand{\Gen}{{\rm Gen}}
\newcommand{\fd} {{\rm fin.dim}}
\newcommand{\DTr}{{\rm DTr}}
\newcommand{\cpx}[1]{#1^{\bullet}}
\newcommand{\D}[1]{{\mathscr D}(#1)}
\newcommand{\Dc}[1]{{\mathscr D}^c(#1)}
\newcommand{\Dz}[1]{{\mathscr D}^+(#1)}
\newcommand{\Df}[1]{{\mathscr D}^-(#1)}
\newcommand{\Db}[1]{{\mathscr D}^b(#1)}
\newcommand{\C}[1]{{\mathscr C}(#1)}
\newcommand{\Cz}[1]{{\mathscr C}^+(#1)}
\newcommand{\Cf}[1]{{\mathscr C}^-(#1)}
\newcommand{\Cb}[1]{{\mathscr C}^b(#1)}
\newcommand{\K}[1]{{\mathscr K}(#1)}
\newcommand{\Kz}[1]{{\mathscr K}^+(#1)}
\newcommand{\Kf}[1]{{\mathscr  K}^-(#1)}
\newcommand{\Kb}[1]{{\mathscr K}^b(#1)}
\newcommand{\Kac}[1]{{\mathscr  K}_{ac}(#1)}
\newcommand{\modcat}{\ensuremath{\mbox{{\rm -mod}}}}
\newcommand{\Modcat}{\ensuremath{\mbox{{\rm -Mod}}}}
\newcommand{\Tr}{{\rm Tr}}
\newcommand{\stmodcat}[1]{#1\mbox{{\rm -{\underline{mod}}}}}
\newcommand{\pmodcat}[1]{#1\mbox{{\rm -proj}}}
\newcommand{\imodcat}[1]{#1\mbox{{\rm -inj}}}
\newcommand{\Pmodcat}[1]{#1\mbox{{\rm -Proj}}}
\newcommand{\Imodcat}[1]{#1\mbox{{\rm -Inj}}}
\newcommand{\opp}{^{\rm op}}
\newcommand{\otimesL}{\otimes^{\rm\mathbb L}}
\newcommand{\rHom}{{\rm\mathbb R}{\rm Hom}\,}
\newcommand{\projdim}{\pd}
\newcommand{\Hom}{{\rm Hom}}
\newcommand{\Coker}{{\rm Coker}}
\newcommand{ \Ker  }{{\rm Ker}}
\newcommand{ \Img  }{{\rm Im}}
\newcommand{ \supp  }{{\rm Supp}}
\newcommand{ \amp  }{{\rm amp}}
\newcommand{ \dg  }{{\rm dg}}
\newcommand{ \Di }{{\rm D}}
\newcommand{\Dle}[2]{\mathscr D^{\le #1}(#2)}
\newcommand{\Kle}[2]{\mathscr K^{\le #1}(#2)}
\newcommand{\hocolim}{\rm hocolim}
\newcommand{\Dge}[2]{\mathscr D^{\ge #1}(#2)}
\newcommand{\holim}{\rm holim}
\newcommand{\Kbb}[2]{\mathscr K^{#1}_{\infty}(#2)}
\newcommand{\cE}{\mathscr E}
\newcommand{\cY}{\mathscr Y}
\newcommand{\cH}{\mathcal H}
\newcommand{\cX}{\mathcal X}
\newcommand{\cU}{\mathcal U}
\newcommand{\cV}{\mathcal V}
\newcommand{\Dint}[1]{\mathscr D^{[c,d]}(#1)}

\newcommand{\Ext}{{\rm Ext}}
\newcommand{\StHom}{{\rm \underline{Hom}}}
\def\vez{\varepsilon}
\def\bz{\bigoplus}
\def\sz {\oplus}
\def\epa{\xrightarrow}
\def\inja{\hookrightarrow}
\newcommand{\IZ}{\mathbb{Z}}
\newcommand{\con}{{\rm Con}}

\newcommand{\dotHom}{\Hom^{\bullet}}
\newcommand{\dotEnd}{\End^{\bullet}}
\newcommand{\thick}{{\rm thick}}
\newcommand{\Tria}{{\rm Tria}}
\newcommand{\lra}{\longrightarrow}
\newcommand{\lla}{\longleftarrow}
\newcommand{\lraf}[1]{\stackrel{#1}{\lra}}
\newcommand{\llaf}[1]{\stackrel{#1}{\lla}}
\newcommand{\ra}{\rightarrow}
\newcommand{\dk}{{\rm dim_{_{k}}}}

\newcommand{\colim}{{\rm colim\, }}
\newcommand{\limt}{{\rm lim\, }}
\newcommand{\Add}{{\rm Add }}
\newcommand{\Tor}{{\rm Tor}}
\newcommand{\Cogen}{{\rm Cogen}}
\newcommand{\simeqf}[1]{\stackrel{#1}{\simeq}}

{\Large \bf
\begin{center}
Recollements of derived categories from $n$-term big tilting complexes
\end{center}}
\medskip

\centerline{\bf Shengyong Pan$^1$, Huabo Xu$^{2*}$}
\medskip

\begin{center}\footnotesize
1.School of Mathematics and Statistics, Beijing Jiaotong University,\\
100044~Beijing, China
\smallskip

Beijing Key Laboratory of Biological Big Data and Topological Statistics, Beijing Jiaotong University,\\
 100044~
Beijing, China
\smallskip

2. School of Science, Beijing University of Civil Engineering and Architecture,\\ 102616
~ Beijing,
 China
\end{center}

\renewcommand{\thefootnote}{\alph{footnote}}
\setcounter{footnote}{-1} \footnote{
 Email:shypan@bjtu.edu.cn.}

\renewcommand{\thefootnote}{\alph{footnote}}
\setcounter{footnote}{-1} \footnote{
Corresponding author. Email: huabo0567@163.com.}
\renewcommand{\thefootnote}{\alph{footnote}}
\setcounter{footnote}{-1} \footnote{2020 Mathematics Subject
Classification: Primary 18E80, 16E35, 16E45; Secondary 16S10,
16S85.}
\renewcommand{\thefootnote}{\alph{footnote}}
\setcounter{footnote}{-1} \footnote{Keywords: differential graded algebra,
 derived category, recollement, big tilting complex,universal localization}
\renewcommand{\thefootnote}{\alph{footnote}}
\setcounter{footnote}{-1} \footnote{Date: version of August 01, 2026}
\begin{abstract}
Let $A$ be a ring and let $\bf T$ be an $n$-term big tilting complex over $A$, represented by a bounded complex $\cpx{P}$ of projective $A$-modules.
Set $B=\End_{\D{A}}(\cpx{P}), \Lambda:=\dotEnd_A(\cpx{P})$ and $\Delta:=\tau_{\leq 0}\Lambda$.  The associated complex of $A$-$B$-bimodule $\cpx{T}=\cpx{P}\otimesL_{\Delta}B$ is given.
We construct an
extension-closed exact subcategory $\mathscr E$ of $B\Modcat$ and prove that
the derived category $\D{B}$ admits a recollement by $\D{\mathscr E}$ and
$\D{A}$.  The proof passes through a dg double-centralizer description of a
projective model of $\cpx{T}$ and an exact realisation theorem identifying
$\D{\mathscr E}$ with the kernel of the derived tensor functor.
We further show that $\mathscr E$ is $d$-symmetric for every
$d$ not smaller than the amplitude of a perfect right $B$-model of
$\cpx{T}$.  Moreover, $\mathscr E$ is abelian if and only if the
kernel is stable under the standard $t$-structure, equivalently, the
recollement is induced by a homological ring epimorphism.  In right
$B$-amplitude at most one, the kernel is therefore abelian, and our
construction specializes to the recollement arising from universal
localisation in the two-term case.  Thus the classical two-term
picture extends to arbitrary finite-term big tilting complexes, with
exact categories replacing abelian kernels in higher amplitude.
Finally, we construct genuinely $n$-term non-compact big tilting complexes for every $n\geq2$, including an explicit three-term example whose left-hand term is determined by an algebraic Calkin-type quotient.
\end{abstract}

\tableofcontents

\section{Introduction}\label{sec:introduction}

Recollements provide a natural framework for decomposing a
triangulated category into two complementary pieces.  Since their
introduction by Beilinson, Bernstein and Deligne, they have played an
important role in representation theory, tilting theory, and
homological algebra; see, for example,
\cite{BBD,HKL,Miyachi,JOR,Yang}.  For rings, a recollement
\vspace{0.3mm}

$$
\xymatrix{\D{C}\ar[r]&\D{B}\ar[r]\ar@/^1.4pc/[l]\ar@/_1.4pc/[l]
\mathcal{}&\D{A}\ar@/^1.4pc/[l]\ar@/_1.4pc/[l]}
$$
\vspace{0.08mm}

\noindent expresses the derived category of \(B\) as being assembled from the
derived categories of \(A\) and \(C\).  This leads to a basic problem:
given a tilting object over \(A\), with endomorphism ring \(B\), when
does it induce such a decomposition, and how can one describe the
category occurring on the left?

For a classical tilting complex, Rickard's derived Morita theory gives
an equivalence between the corresponding derived categories
\cite{Rickard}.  The situation is quite different for infinitely
generated tilting objects.  Such an object need not be compact, and
the associated derived tensor functor need not be an equivalence.
Instead, one obtains a localisation whose kernel measures the failure
of derived Morita equivalence.  The main question is then whether this
triangulated kernel admits a concrete algebraic realisation.

This question has been studied extensively for good tilting modules.
Let \(T\) be a good tilting \(A\)-module and put
\(B=\End_A(T)\).  When \(T\) has projective dimension at most one,
Chen and Xi proved that there is a homological ring epimorphism
\(B\to C\) such that
\[
    \Ker\bigl(
       T\otimesL_B-:
       \D{B}\longrightarrow\D{A}
    \bigr)
    \simeq \D{C};
\]
in particular, they obtained a recollement of derived categories of
ordinary rings \cite{CX1}.  For good tilting modules of higher
projective dimension, however, the existence of such a ring \(C\) is
a restrictive condition.  Chen and Xi gave an intrinsic criterion for
this condition and exhibited good higher-dimensional tilting modules
for which it fails \cite{CX1'}.

A more flexible description was subsequently developed by Chen and
Xi using symmetric subcategories \cite{CX3}.  They showed that, for a
good tilting module of finite projective dimension, the tensor kernel
is triangle equivalent to the derived category of a symmetric exact
subcategory of a module category.  Thus an exact category, rather than
the module category of another ring, is the natural algebraic model
for the kernel in general.  This point of view is especially important
in higher homological dimension, where the relevant subcategory need
not be closed under kernels and cokernels.

Big tilting complexes provide a common extension of classical tilting
complexes and good tilting modules.  The notion was introduced by Xu,
who treated the two-term case in \cite{XuTwoTerm}.  For a two-term big
tilting complex, Xu identified the kernel with the derived category of
a universal localisation of the endomorphism ring.  The two-term
situation is special: the relevant orthogonality condition can be
encoded by a single morphism
$\theta:U^1\longrightarrow U^0$
between finitely generated projective modules, and the condition on a
module \(M\) is precisely that
\[
    \Hom_B(\theta,M):
    \Hom_B(U^0,M)\longrightarrow\Hom_B(U^1,M)
\]
be an isomorphism.  This is exactly the condition represented by
universal localisation at \(\theta\).

The purpose of the present paper is to extend this theory from
two-term to arbitrary finite-term big tilting complexes.  The
higher-term case introduces a genuine new phenomenon.  A perfect
right \(B\)-complex of amplitude greater than one imposes several
successive homological conditions on a module, rather than the
invertibility of one morphism.  The resulting subcategory remains
extension-closed, but in general it need not be abelian and therefore
cannot be expected to be the essential image of the restriction
functor associated with a ring epimorphism.  Our main result shows that
the correct replacement is the derived category of an exact
subcategory.
\begin{Def}\rm\label{Big TC}
A complex ${\bf T}\in \C{A}$ is called a \emph {big tilting complex} over the ring $A$ if it is isomorphic in $\D{A}$ to a
bounded complex $\cpx{P}$ of projective $A$-modules such that

(1) $\Hom_{\D{A}}(\cpx{P},{\cpx{P}}^{(\alpha)}[i])=0$ for all $0\neq i\in \mathbb{Z}$ and index sets $\alpha$;

(2) the regular module ${_A}A$ belongs to $\thick_{\D A}( \cpx{P})$, the smallest full triangulated subcategory of $\D{A}$ containing $\cpx{P}$ and being closed under direct summands.
\end{Def}
It is called \(n\)-term if \(\cpx{P}\) can be chosen with nonzero terms
only in degrees \(-(n-1),\ldots,0\).
Let $\bf T$ be an \(n\)-term big tilting complex, represented by
\[
    \cpx{P}:
    \quad
    0\longrightarrow P^{-(n-1)}
      \longrightarrow\cdots\longrightarrow P^{-1}
      \longrightarrow P^0\longrightarrow0,
\]
and put
$ B=\End_{\D{A}}(\cpx{P})$.
The right \(B\)-action on \(\cpx{P}\) is naturally a derived action.  To
retain its chain-level information, we consider the dg endomorphism
algebra
$\Lambda=\End_A^\bullet(\cpx{P})$
and its non-positive truncation
$ \Delta=\tau_{\leq0}\Lambda$.
The self-orthogonality of \(\cpx{P}\) gives two quasi-isomorphisms:
$\Delta\ra\Lambda$ and 
$\Delta\ra B$.
Consequently,
$ \cpx{T}
       :=
    \cpx{P}\otimesL_\Delta B
       \in
    \D{A\otimes_{\mathbb Z}B^{\opp}}$
is a well-defined derived \(A\)-\(B\)-bimodule whose underlying left
\(A\)-object is isomorphic to \(\cpx{P}\).
Consider the adjoint pair

$$\xymatrix{
{\bf R}=\cpx{T}\otimesL_B-:
\D{B}\ar@<.5ex>[r]^{}
 & \D{A}:
 {\bf H}=\rHom_A(\cpx{T},-)\ar@<.5ex>[l]^{}
}$$
and put
$ \mathscr{Y}_B:=\Ker({\bf R})$.
We define a subcategory of $B\Modcat$
$$\mathscr{E}
       :=
    \mathscr{Y}_B\cap B\Modcat
       =
    \left\{
       M\in B\Modcat
       \ \middle|\
       \cpx{T} \otimesL_B M=0
    \right\}.
$$
The subcategory \(\mathscr E\) is extension-closed in \(B\Modcat\);
hence it carries the exact structure inherited from \(B\Modcat\).

Our main theorem identifies not only the stalk objects in the kernel
but the entire triangulated category \(\mathscr Y_B\).

\begin{Theo}\label{main theorem}
Let \(A\) be an associative ring with identity, let $\bf T$ be an
\(n\)-term big tilting complex over \(A\), and put
$B=\End_{\D{A}}(\bf T)$.
Then the exact inclusion $\lambda:\mathscr E\longrightarrow B\Modcat$
induces a triangle equivalence
\[
    \D{\mathscr E}
       \xrightarrow{\ \simeq\ }
    \mathscr Y_B
       =
    \Ker(\cpx{T} \otimesL_B-).
\]
In particular, there is a recollement
$$
\xymatrix{\D{\mathscr{E}}\ar[r]^-{\D{\lambda_*}}&\D{B}\ar[r]\ar@/^1.4pc/[l]\ar@/_1.4pc/[l]
\mathcal{}&\D{A}\ar@/^1.4pc/[l]\ar@/_1.4pc/[l]}
$$
where $\D{\lambda_*}$ stands for the derived restriction functor induced by $\lambda$.
\end{Theo}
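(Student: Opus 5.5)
The proof has two parts. First we get a recollement of $\D{B}$ with left term $\mathscr Y_B$ and right term $\D{A}$. Then we identify $\mathscr Y_B$ with $\D{\mathscr E}$ through the exact inclusion $\lambda$.

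\emph{Step 1: the recollement.} We check that ${\bf H}=\rHom_A(\cpx{T},-)$ is fully faithful, i.e.\ the counit ${\bf R}{\bf H}\to{\rm id}$ is an isomorphism. Condition (2) of Definition~\ref{Big TC} puts ${}_AA$ in $\thick(\cpx{P})$. Since $\cpx{T}\otimesL_B B\simeq \cpx{P}$, the map ${\bf R}{\bf H}(X)\to X$ is an isomorphism for $X=\cpx{P}$, hence for $X=A$ by thick closure. Both functors commute with coproducts: ${\bf H}$ does because condition (1) with arbitrary index sets, together with the fact that $\cpx{P}$ is a bounded complex of projectives, gives $\Hom(\cpx{P},\cpx{P}^{(\alpha)})\cong B^{(\alpha)}$-type compatibility; more precisely, $\rHom_A(\cpx{P},A^{(\alpha)})\simeq \rHom_A(\cpx{P},A)^{(\alpha)}$, which we get by writing $A$ in $\thick(\cpx{P})$. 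So the counit is an isomorphism on $\Tria(A)=\D{A}$. Thus ${\bf R}$ is a Verdier localisation with fully faithful right adjoint. Since ${\bf R}$ preserves products up to the compact-generation argument (or, alternatively, via Brown representability, ${\bf H}$ preserves coproducts and so has a right adjoint), we obtain the standard recollement with left term $\mathscr Y_B=\Ker{\bf R}$. The additional adjoints come from Brown representability, since $\D{B}$ is compactly generated.

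\emph{Step 2: a compact description of $\mathscr Y_B$.} This step uses the dg data. Using $\Delta\to\Lambda$ and $\Delta\to B$, we realise $\cpx{T}$ as a right $B$-complex $\cpx{Q}$ that, viewed as a right $B$-module complex, is quasi-isomorphic to a homotopy colimit/coproduct-compatible object. The key claim to establish is that there is a set $\mathcal S$ of perfect right $B$-complexes, of amplitude at most some $d$, with
\[
\mathscr Y_B=\{Y\in\D{B}\mid \rHom_B(S,Y)=0\ \text{for all } S\in\mathcal S\}.
\]
To get it, we use the double-centralizer description: $\cpx{T}\otimesL_B Y=0$ if and only if $\rHom_A(\cpx{P},\cpx{T}\otimesL_BY)=0$, because $\cpx{P}$ generates $\D{A}$. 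Computing this via $\Lambda$ yields $C\otimesL_B Y$ for the cone $C$ of the map $B\to\rHom_A(\cpx{T},\cpx{T})$. Writing $C$ as a filtered colimit of perfect right complexes then makes the vanishing condition into $\rHom$-orthogonality to perfect complexes, using that $B$-duals of perfect complexes convert $\otimesL$ into $\rHom$. In particular $\mathscr Y_B$ is closed under coproducts, products, and the truncations of the standard t-structure only up to amplitude-dependent errors.

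\emph{Step 3: the realisation theorem (main obstacle).} Here we must show that $\D{\mathscr E}\to\D{B}$ is fully faithful with essential image $\mathscr Y_B$. I would verify two things. (a) For $E,E'\in\mathscr E$, $\Ext^i_{\mathscr E}(E,E')\to \Ext^i_B(E,E')$ is bijective. (b) Every $Y\in\mathscr Y_B$ is built from its cohomology, with each $H^i(Y)\in\mathscr E$. Part (b) is the delicate point: in amplitude greater than one, $\mathscr Y_B$ need not be stable under the t-structure. So instead of truncating, I would resolve $Y$ by a complex with terms in $\mathscr E$. For this I would use the right adjoint $i^!$ (or the left adjoint $i^*$) of the inclusion $\mathscr Y_B\to\D{B}$ applied to projective $B$-modules. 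A $B$-module $M$ lies in $\mathscr E$ iff $\rHom_B(S,M)=0$ for $S\in\mathcal S$. By Step 2, a truncation argument using the bounded amplitude $d$ of $\mathcal S$ shows that objects of the form $H^0(i^*(B^{(\alpha)}))$ adjusted by "$d$-step" modifications lie in $\mathscr E$, and that $\mathscr E$ is resolving enough for a Keller-type criterion. Concretely, I would apply the criterion that an extension-closed exact subcategory $\mathscr E\subseteq B\Modcat$ induces a fully faithful $\D{\mathscr E}\to\D{B}$ when every epimorphism in $B\Modcat$ from a module onto an object of $\mathscr E$ (with the relevant Ext-effaceability) can be refined within $\mathscr E$. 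I would check this effaceability from the vanishing of $\rHom_B(S,-)$ for $S\in\mathcal S$, and then conclude essential surjectivity by a dévissage over the compact generators $i^*(B)$, which lie in the image. I expect effaceability and essential surjectivity without t-stability to be the hard part. Once the equivalence $\D{\mathscr E}\simeq\mathscr Y_B$ holds, substituting it into the recollement of Step 1 gives the stated recollement, with $\D{\lambda_*}$ as the left-hand inclusion.
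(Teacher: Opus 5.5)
Your outline has a genuine gap at its centre. Step~3 carries the actual content of the theorem, and it is a list of intentions rather than an argument. You never produce objects of $\mathscr E$ from projectives. The ``Keller-type'' effaceability criterion is neither stated precisely nor checked. The assertion that the generators $i^*(B)$ lie in the image of $\D{\mathscr E}$ is exactly what would need proof. The Ext-comparison (a) only controls bounded complexes.

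You are right that one should apply an adjoint of $\mathscr Y_B\hookrightarrow\D{B}$ to projective modules. The decisive fact, however, concerns the \emph{right} adjoint $i^!$, and it uses condition~(1) of Definition~\ref{Big TC} for \emph{all} index sets. It gives $H^i{\bf HR}(B^{(\alpha)})\simeq\Hom_{\D{A}}(\cpx{P},{\cpx{P}}^{(\alpha)}[i])=0$ for $i\neq0$, while $H^0(\eta_{B^{(\alpha)}})$ is the injection $\Hom_{\D{A}}(\cpx{P},\cpx{P})^{(\alpha)}\hookrightarrow\Hom_{\D{A}}(\cpx{P},{\cpx{P}}^{(\alpha)})$. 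Hence every projective $Q$ sits in a functorial exact sequence $0\to Q\to\Psi(Q)\to C(Q)\to0$ with ${\bf HR}(Q)\simeq\Psi(Q)[0]$ and $i^!Q\simeq C(Q)[-1]$, so $C(Q)\in\mathscr E$. Apply this termwise to an h-projective resolution $Q^\bullet$ of $Y\in\mathscr Y_B$. One must first identify the termwise complex $\Psi(Q^\bullet)$ with ${\bf HR}(Q^\bullet)$ compatibly with the unit; this is where brutal-filtration and way-out arguments are needed. The result is $Y\simeq C(Q^\bullet)[-1]$, a complex with all terms in $\mathscr E$, which gives essential surjectivity with no t-structure involved.

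Full faithfulness then comes from strict acyclicity rather than effaceability. Since ${\bf R}(M)\in\mathscr D^{[a,b]}(A)$ for every module $M$, an acyclic complex with terms in $\mathscr E$ satisfies ${\bf R}(Z^i)\simeq{\bf R}(Z^{i+r})[-r]$ for all $r\geq0$, which forces all its cycles into $\mathscr E$. For $X\in\C{\mathscr E}$, push out $0\to{}_pX\to\Psi({}_pX)\to C({}_pX)\to0$ along ${}_pX\to X$. Combined with strict acyclicity, this shows that $Y\mapsto C({}_pY)[-1]$ is a quasi-inverse of $\D{\mathscr E}\to\mathscr Y_B$.

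Steps~1 and~2 also rest on false assertions. Since $H^i{\bf H}(X)\simeq\Hom_{\D{A}}(\cpx{T},X[i])$, the functor ${\bf H}=\rHom_A(\cpx{T},-)$ preserves coproducts if and only if $\cpx{T}$ is compact. So your claim fails precisely for the non-compact complexes the theorem is about. Had it held, ${\bf HR}$ would preserve coproducts, and the unit (invertible at $B$) would be invertible on $\Tria(B)=\D{B}$, so $\mathscr Y_B$ would vanish.

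The recollement is nevertheless true, but it needs a double-centralizer input. The paper proves that $A\to\dotEnd_{\Lambda\opp}(\cpx{P})\opp$ is a quasi-isomorphism, so that $\cpx{P}_\Lambda$ is partial tilting and J{\o}rgensen's recollement applies. Alternatively, ${\bf R}$ preserves products because $\cpx{T}_B$ is perfect, so it has a left adjoint. One must then check that this left adjoint is fully faithful on the compact generator $A$, which again amounts to $A\simeq\rHom_{B\opp}(\cpx{T},\cpx{T})$.

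In Step~2, the map $B\to\rHom_A(\cpx{T},\cpx{T})$ is an isomorphism in $\D{B}$ by self-orthogonality, so its cone is zero. Moreover, ${\bf HR}(Y)$ is not $\rHom_A(\cpx{T},\cpx{T})\otimesL_BY$ when $\cpx{T}$ is non-compact. The correct description is simply $\mathscr Y_B=\{Y\mid\rHom_B(V^{*},Y)=0\}$ with $V^{*}=\dotHom_{B\opp}(V^\bullet,B)$, because $V^\bullet\otimes_B-\simeq\dotHom_B(V^{*},-)$.
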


The proof of Theorem~\ref{main theorem} has three main components.

First, we establish a dg double-centralizer theorem.  The complex
\(\cpx{P}\) is naturally a dg \(A\)-\(\Lambda\)-bimodule, and the left
\(A\)-action induces a morphism of dg algebras
$\label{eq:intro-double-centralizer}
    \mu:
    A\ra \dotEnd_{\Lambda^{\opp}}(\cpx{P})\opp.
$
Using the condition $A\in\thick_{\D{A}}(\cpx{P})$,
together with the natural double-dual evaluation morphisms, we prove
that \(\mu\) is a quasi-isomorphism.  It follows that \(\cpx{P}\),
regarded as a dg \(\Lambda^{\opp}\)-module, is partial tilting.  We can
therefore apply J{\o}rgensen's recollement for partial tilting dg
modules \cite{JOR} and transport the resulting recollement from
\(\D{\Lambda}\) to \(\D{B}\).

Second, the underlying right \(B\)-object of $\cpx{T}$ is perfect.
Choose a representative
$ V^\bullet\in K^b(B^{\opp}\text{-proj})$,
where $V^i=0$ for any $i\notin[a,b]$.
Put
$m=b-a$.
For every \(B\)-module \(M\), after forgetting the left \(A\)-action
there is a natural isomorphism
$\cpx{T}\otimesL_BM\simeq V^\bullet\otimes_B M$.
Consequently,
\[
    M\in\mathscr E
    \quad\Longleftrightarrow\quad
    H^i(V^\bullet\otimes_B M)=0
    \quad\text{for all }i\in\mathbb Z.
\]
The finite amplitude of \(V^\bullet\) allows us to prove that
\(\mathscr E\) is \(d\)-symmetric for every \(d\geq m\).  It also
shows that \(\mathscr E\) is definable: each condition
$H^i(V^\bullet\otimes_B M)=0$
is the zero class of a pp-pair.  In particular, \(\mathscr E\) is
closed under extensions, products, coproducts, filtered colimits, pure
submodules, and direct summands.

When \(B\) is semiperfect, the amplitude bound can be made intrinsic.
The right \(B\)-object $\cpx{T}_B$ has a unique minimal
representative
$ W^\bullet\in K^b(\Pmodcat{B^{\opp}})$
up to isomorphism of complexes, and its optimal amplitude is
$$\label{eq:intro-optimal-amplitude}
    \operatorname{amp}_B(\cpx{T})
       =
    \max\left\{
       i\ \middle|\
       H^i(\cpx{T}\otimesL_BB/J(B))\neq0
    \right\}                                                    
      -
    \min\left\{
       i\ \middle|\
       H^i(\cpx{T} \otimesL_B B/J(B))\neq0
    \right\}.
$$

Third, and most importantly, it is not formal that
$ \mathscr E=\mathscr Y_B\cap B\Modcat$
determines the whole kernel \(\mathscr Y_B\).  In general, the
intersection of a triangulated subcategory with the standard heart
need not recover that triangulated subcategory.  We therefore prove
an exact-realisation theorem
$\label{eq:intro-exact-realisation}
    \D{\mathscr E}
       \xrightarrow{\ \simeq\ }
    \mathscr Y_B
$.
The construction starts from the adjunction unit
$ \eta_Q:Q\ra {\bf HR}(Q)$
for a projective \(B\)-module \(Q\).  Its degree-zero part gives a
functorial exact sequence
\[
    0\longrightarrow Q
      \longrightarrow H^0{\bf HR}(Q)
      \longrightarrow C(Q)
      \longrightarrow0
\]
with \(C(Q)\in\mathscr E\).  These exact sequences are then assembled
over complexes of projective modules.

A delicate point is to compare the termwise complex \(H^0{\bf HR}(Q^\bullet)\)
with the derived object \({\bf HR}(Q^\bullet)\) in a way that is natural and
compatible with the adjunction unit.  We first construct this
comparison on projective stalk complexes.  We then extend it coherently
to bounded complexes and prove that it is an isomorphism by means of
the finite brutal filtration.  Finally, a way-out estimate and a
functorial telescope extend the comparison to bounded-above
projective resolutions.  This construction produces a quasi-inverse
to the derived restriction functor
$\D{\mathscr{E}}\ra\mathscr{Y}_B$
induced by the inclusion \(\mathscr E\subseteq B\Modcat\).

\subsection*{Relation with the work of Chen--Xi}

Our construction is motivated by the symmetric-subcategory approach
of Chen and Xi \cite{CX3}.  Their work shows that, for a good tilting
module, the kernel of the derived tensor functor is naturally modelled
by the derived category of a symmetric exact subcategory.  The present
paper retains this central idea, but the passage from modules to
complexes introduces two additional difficulties.

First, a good tilting module is an actual \(A\)-\(B\)-bimodule, whereas
the endomorphism action on a big tilting complex is naturally defined
only at the derived level.  We resolve this problem by passing through
the dg algebras
\[
    \Lambda=\End_A^\bullet(\cpx{P})
    \qquad\text{and}\qquad
    \Delta=\tau_{\leq0}\Lambda
\]
and by proving the dg double-centralizer quasi-isomorphism
\eqref{eq:intro-double-centralizer}.

Second, the finite tilting coresolution used in the module case is
replaced here by a functorial projective-assembly construction for the
monad ${\bf HR}$.  Establishing the comparison between this termwise
construction and ${\bf HR}$ on unbounded derived categories requires
coherent extension from stalk complexes and a separate
bounded-above argument.  Once this comparison has been obtained, the
symmetric exact category
\[
    \mathscr E
       =\Ker(\cpx{T}\otimesL_B -)\cap B\Modcat
\]
plays the same structural role as in the work of Chen--Xi.  Thus our
result extends their exact-categorical description from good tilting
modules to arbitrary finite-term big tilting complexes.

The relationship is particularly transparent when a good tilting
module is represented by a deleted projective resolution.  This
resolution is a big tilting complex in the sense of
Definition~\ref{Big TC}, and Theorem~\ref{main theorem} yields a
recollement of the same form as the Chen--Xi recollement.  The present
construction additionally records the dg origin of the bimodule and
applies to big tilting complexes that do not arise from modules.

\subsection*{Relation with the work of Xu}

The present paper also extends the two-term theory developed by Xu
\cite{XuTwoTerm}.  In the two-term setting, the compact object defining
the kernel can be represented by one morphism
$\theta:U^1\ra U^0$
between finitely generated projective \(B\)-modules.  The corresponding
orthogonal subcategory is described by the single condition that
$ \Hom_B(\theta,M)$
be invertible, and therefore by the universal localisation
$ B\ra B_\theta$.
This makes the left-hand term the derived category of an ordinary ring.

For a higher-term perfect complex
\[
    V^a\longrightarrow V^{a+1}
      \longrightarrow\cdots\longrightarrow V^b,
\]
acyclicity of \(V^\bullet\otimes_B M\) involves all cycles and
boundaries of this complex.  It is not, in general, represented by the
invertibility of a single morphism.  Our exact category \(\mathscr E\)
packages these higher homological conditions, while the
exact-realisation theorem identifies its derived category with the
whole tensor kernel.

The connection with universal localisation is recovered in amplitude
at most one.  Indeed, if
$\amp_B(\cpx{T})\leq1$,
then \(\mathscr E\) is \(1\)-symmetric and therefore closed under
kernels and cokernels.  Hence it is an abelian subcategory of
\(B\Modcat\).  In this case, the kernel is induced by a homological
ring epimorphism, and the two-term construction specialises to Xu's
universal-localisation recollement.  Thus the exact category
\(\mathscr E\) is precisely the higher-amplitude replacement for the
universal-localisation module category occurring in the two-term
case.

More generally, we prove that the following conditions are equivalent:
\begin{enumerate}
\item[(1)] $\mathscr E$ is an abelian subcategory of $B\Modcat$;
\item[(2)] the standard \(t\)-structure on \(\D{B}\) restricts to
      \(\mathscr Y_B\);
\item[(3)] there exists a homological ring epimorphism
      \(\lambda:B\to C\) such that restriction of scalars induces a
      triangle equivalence
$ \D{C}\xrightarrow{\ \simeq\ }\mathscr Y_B$.
\end{enumerate}
When these conditions hold, the category $\mathscr{E}$ is isomorphic to the image of the restriction functor $\lambda_*$.
This characterises exactly when the exact-categorical left-hand term
in Theorem~\ref{main theorem} can be replaced by the derived category
of an ordinary ring.

Finally, we construct explicit families of genuinely higher-term big
tilting complexes.  They include non-compact \(n\)-term examples for
every \(n\geq2\), examples obtained by transporting non-compact
tilting objects along derived equivalences.  These examples
show that the higher-term theory is not obtained merely by adjoining
contractible summands to two-term complexes.

As a concrete application, we obtain a computable radical-support
criterion when the endomorphism ring \(B\) is semiperfect.  Namely, if
$ H^\ast(\cpx{T}\otimesL_BB/J(B))$
is concentrated in one degree or in two consecutive degrees, then the
tensor kernel is induced by a homological ring epimorphism \(B\to C\).
Thus the existence of a ring-theoretic left-hand term can be detected
by a finite calculation over the semisimple ring \(B/J(B)\).

We also compute a genuinely three-term example explicitly.  Starting
from the lower triangular matrix algebra
\(A=\operatorname{LT}_3(k)\), we construct a non-compact three-term big
tilting complex with endomorphism ring
$ B\simeq \End_\Gamma\bigl(\Gamma^{(\mathbb N)}\bigr)$, where
$\Gamma\simeq kQ_3/J_3^2$.
If \(e\in B\) is the projection onto one free summand, then
$ B\ra C=B/BeB$
is a nontrivial homological ring epimorphism and
$\D{C}\simeq\Ker\bigl(
       \cpx{T}\otimesL_B-
    \bigr)
$.
Here \(BeB\) is exactly the ideal of endomorphisms of
\(\Gamma^{(\mathbb N)}\) which factor through a finitely generated
free module.  This gives an explicit algebraic Calkin-type
recollement arising from a genuine higher-term big tilting complex.
\smallskip

The paper is structured as follows. In Section 2, we fix notation, recall the definitions of recollements and differential graded algebras, and state an important lemma on recollements of triangulated categories induced by a set of homomorphisms between finitely generated projective modules; this lemma will be used in the proof of our main theorem. We also study the optimal amplitudes of perfect complexes over semiperfect rings.
Section 3 first introduces the notion of big tilting complexes. We then construct the dg recollement, prove the required double‑centraliser statements, and study definability properties of a certain exact subcategory. Finally, we develop the functorial projective‑assembly construction, prove the exact‑realisation equivalence, establish Theorem~\ref{main theorem}, and give the associated criteria for the 
$t$-structure and the homological‑ring‑epimorphism.
Section 4 is devoted to examples and construction methods. We first construct a uniform matrix‑ring family of genuinely higher‑term non‑compact big tilting complexes and compute an explicit three‑term recollement involving an algebraic Calkin‑type quotient. We then develop constructions by derived transport and by chains of idempotents, including a localisation example that is not obtained merely by taking infinite coproducts of a compact tilting complex. Finally, we give a direct‑product construction that produces genuinely $n$-term big tilting complexes for every 
$n\geq 2$.

\section*{Acknowledgments}
The research of Shengyong Pan is supported by Beijing Natural Science Foundation (1262017,1252011).
The second author is supported by
by the BUCEA Postgraduate Education and Teaching Quality Improvement Project
(Grant No.J2026016).

\section{Preliminaries\label{sect2}}
In this section, we briefly recall some definitions, basic facts and conventions used in this paper.

\subsection{Notation}

Let $\mathcal C$ be an additive category.
Throughout the paper, a full subcategory $\mathcal {X}$ of $\mathcal C$ is always assumed to be closed under isomorphisms, that is, if
$X$ and $Y$ are objects in $\cal C$, then $Y\in{\mathcal X}$
whenever $Y\simeq X$ with $X\in  {\mathcal X}$.
By $\Ker(\Hom_{\mathcal{C}}(\mathcal{X},-))$ we denote the \emph{right orthogonal subcategory} with respect to $\mathcal{X}$,
that is, the full subcategory of $\mathcal{C}$ consisting of the objects $Y$ such
that $\Hom_{\mathcal{C}}(X,Y)=0$ for all objects $X$ in
$\mathcal{X}$.

Given two morphisms $f: X\to Y$ and $g: Y\to Z$ in $\mathcal C$, we
denote the composition of $f$ and $g$ by $fg$ which is a morphism
from $X$ to $Z$, while we denote the composition of a functor
$F:\mathcal {C}\to \mathcal{D}$ between categories $\mathcal C$ and
$\mathcal D$ with a functor $G: \mathcal{D}\to \mathcal{E}$ between
categories $\mathcal D$ and $\mathcal E$ by $GF$ which is a functor
from $\mathcal C$ to $\mathcal E$. The essential image of the functor $F$ is denoted by Im$(F)$ which is a full subcategory of $\mathcal D$.

By a complex $\cpx{X}$ over $\mathcal{C}$ we mean a sequence  $\cdots\to X^i\xrightarrow{d_X^i } X^{i+1}\epa{d_X^{i+1}} X^{i+2}\to\cdots$ of morphisms $d_X^i$ between objects $X^i$ in $\mathcal{C}$ satisfying that $d_X^id_X^{i+1}=0$ for all
$i\in\mathbb{Z}$. As usual, $\cpx{X}$ is denoted by $
(X^i, d_X^i)_{i\in\mathbb{Z}}$ and $d_X^i$ is called an $i$-th
differential of $\cpx{X}$, $Z^i(\cpx{X})=\Ker(d^i)$ is called the $i$-cycles of $\cpx{X}$ and $H^i(X)$ is called the $i$-th cohomology. Sometimes, we shall write $(X^i)_{i\in\mathbb{Z}}$ for $\cpx{X}$ without mentioning $d^i_X$. For a fixed integer $n$, we denote by
$\cpx{X}[n]$ the complex obtained from $\cpx{X}$ by shifting $n$ degrees, that is, $(\cpx{X}[n])^i=X^{n+i}$,  and by $H^n(\cpx{X})$ the cohomology of $\cpx{X}$ in degree $n$. The \emph{soft truncated complex} $\tau_{\leq n} \cpx{X}$ of $\cpx{X}$ in degree $n$ is a subcomplex of $\cpx{X}$ satisfying that $(\tau_{\leq n} \cpx{X})^i$ equals $X^i$ for all $i<n$, $\Ker(d^i) $ for $i=n$ and zero otherwise.
The \emph{brutal complex} $\cpx{X}_{\leq n}$ of $\cpx{X}$ in degree $n$ is a subcomplex of $\cpx{X}$ satisfying $(\cpx{X}_{\leq n})^i=X^i$ for all $i\leq n$, and zero otherwise.
Recll that the cohomological support $\supp(X) = \{ i \in \mathbb{Z} \mid H^i(X) \neq 0 \}$.
If this support is non-empty and bounded (i.e., contained in a finite interval), then the amplitude of $X$, denoted by $\amp(X) =[\inf\supp(X), \sup\supp(X)]$.

Let $\C{\mathcal{C}}$ be the category of all complexes over
$\mathcal{C}$ with chain maps, and $\K{\mathcal{C}}$ the homotopy
category of $\C{\mathcal{C}}$. We denote by $\Cb{\mathcal{C}}$ and
$\Kb{\mathcal{C}}$ the full subcategories of $\C{\mathcal{C}}$ and
$\K{\mathcal{C}}$ consisting of bounded complexes over
$\mathcal{C}$, respectively. When $\mathcal{C}$ is abelian, the derived category of $\mathcal{C}$ is denoted by $\D{\mathcal{C}}$, which is the localization of $\K{\mathcal C}$ by inverting quasi-isomorphisms.

Let $\mathcal{T}$ be a triangulated category with (small) coproducts, that is, coproducts indexed over sets exist in
$\mathcal{T}$. An object $U\in \mathcal{T}$ is said to be \emph {compact} if $\Hom_\mathcal{T}(U,-)$ commutes with coproducts in $\mathcal{T}$.
The full subcategory of $\mathcal{T}$ consisting of all compact objects is denoted by $\mathcal{T}^c$.
For any non-empty class $\mathscr{S}$ of objects in $\mathcal{T}$, we denote by $\Tria_\mathcal{T}(\mathscr{S})$ (resp., $\thick_\mathcal{T}(\mathscr{S})$) the smallest full triangulated subcategory of $\mathcal{T}$ containing $\mathscr{S}$ and being closed under coproducts (resp., direct summands). When $\mathcal{T}$ is clear from the context, we also denote $\Tria_\mathcal{T}(\mathscr{S})$ by $\Tria(\mathscr{S})$ without referring to $\mathcal{T}$. If $\mathscr{S}$ consists of a single object $S$, then we simply write $\Tria_\mathcal{T}(S)$ and $\thick_\mathcal{T}(S)$ for $\Tria_\mathcal{T}(\{S\})$ and $\thick_\mathcal{T}(\{S\})$, respectively.


Let $A$ be a ring. We denote by $A\Modcat$ the category of all unitary left $A$-modules. For an $A$-module $M$, we denote by $\add(M)$ (resp., $\Add(M)$) the full subcategory of $A\Modcat$ consisting of all direct summands of finite (resp., arbitrary) direct sums of copies of $M$. In many circumstances, we shall write $A\pmodcat$ and $A\Pmodcat$ for
$\add(_AA)$ and $\Add(_AA)$, respectively. If $\alpha$ is an index set,
we denote by $M^{(\alpha)}$ the direct sum of $\alpha$ copies of $M$.

If $f: M\ra N$ is a homomorphism of $A$-modules, then the image of
$x\in M$ under $f$ is denoted by $(x)f$ instead of $f(x)$. Also, for
any $A$-module $X$, the induced morphisms $\Hom_A(X,f):
\Hom_A(X,M)\ra \Hom_A(X,N)$ and $\Hom_A(f,X): \Hom_A(N, X)\ra
\Hom_A(M, X)$ is denoted by $f^*$ and $f_*$, respectively.

For simplicity,  we write $\C{A}$, $\K{A}$ and $\D{A}$ for $\C{A\Modcat}$, $\K{A\Modcat}$ and $\D{A\Modcat}$, respectively.  As usual, $A\Modcat$ is always identified with the full subcategory of $\D{A}$ consisting of all stalk complexes
concentrated in degree zero. The mapping cone of a chain map $\cpx{f}$ in $\C{A}$ is denoted by $\con(\cpx{f})$. Particularly, for a homomorphism $f:X \to Y$ of $A$-modules, $\con(f)$ is a two-term complex $0\to X\lraf{f} Y\to 0$ in which $X$ and $Y$ are of degree $-1$ and $0$, respectively.

It is known that $\D{A}$ is a compactly generated triangulated category and $\Dc{A}$ consists of all complexes which are  quasi-isomorphic to bounded complexes of finitely generated projective $A$-modules.
 We denote by $J(A)$ the Jocabson radical of $A$.
For any two integers $r$ and $s$ in $\mathbb{Z}$ with $r\leq s$, the notation $[r,s]$ denotes the set $\{r,r+1,\cdots,s\}$. 
We denote by 
$E_{r,s}$ the matrix units of the $n$ by $n$ matrix ring  over $A$, where $1\leq r,s\leq n$.  

\subsection{Recollements and exact categories}\label{Section 2.2}

In this subsection, we first recall the definition of $t$-structures, recollements of triangulated categories (see \cite{BBD, CPS1}), and the definition of exact categories.

\begin{Def}\rm
Let $\mathcal{D}$ be a triangulated category. A {\it $t$-structure} on $\mathcal{D}$ is a pair $(\mathcal{D}^{\leqslant 0}, \mathcal{D}^{\geqslant 0})$ of strictly full subcategories such that
\smallskip

(1) $\Hom_{\mathcal{D}}(\mathcal{D}^{\leqslant 0}, \mathcal{D}^{\geqslant 0}[-1])=0$;

(2) $\mathcal{D}^{\leqslant 0}[1]\subseteq \mathcal{D}^{\leqslant 0}, \mathcal{D}^{\geqslant 0}[-1]\subseteq \mathcal{D}^{\geqslant 0}$;

(3) for each $M\in \mathcal{D}$, there is a triangle $D_M\ra M\ra D^M\ra D_M[1]$ with $D_M\in \mathcal{D}^{\leqslant 0}, D^M\in \mathcal{D}^{\geqslant 0}[-1]$.
\end{Def}
Given a triangulated category $\mathcal{D}$, there is a standard $t$-structure $(\mathcal{D}^{\leqslant 0},\mathcal{D}^{\geqslant 0})$ defined by
$$
\mathcal{D}^{\leqslant 0}=\{X\in\mathcal{D}|H^i(X)=0\hspace{1mm}\mbox{for all} \hspace{1mm}i>0\},\hspace{2mm}
\mathcal{D}^{\geqslant 0}=\{X\in\mathcal{D}|H^i(X)=0\hspace{1mm}\mbox{for all} \hspace{1mm}i<0\}.
$$
If $(\mathcal{D}^{\leqslant 0}, \mathcal{D}^{\geqslant 0})$ is a $t$-structure on $\mathcal{D}$,
then $(\mathcal{D}^{\leqslant 0},\mathcal{D}^{\geqslant 0}][-1]$ is a torsion pair in $\mathcal{D}$.
The notion of torsion pairs is closely associated with recollements of triangulated categories\cite{CX1}.
\begin{Def}\rm
Let $\mathcal{D}, \mathcal{D'}$ and $\mathcal{D''}$ be triangulated categories.
We say that \emph{$\mathcal{D}$ is a recollement of $\mathcal{D'}$ and
$\mathcal{D''}$} (or there is a recollement among $\mathcal{D'}$, $\mathcal{D}$ and $\mathcal{D}''$) if there are six triangle functors displayed in the
diagram
$$\xymatrix{\mathcal{D''}\ar^-{i_*=i_!}[r]&\mathcal{D}\ar^-{j^!=j^*}[r]
\ar^-{i^!}@/^1.4pc/[l]\ar_-{i^*}@/_1.6pc/[l]
&\mathcal{D'}\ar^-{j_*}@/^1.4pc/[l]\ar_-{j_!}@/_1.6pc/[l]}$$
satisfying the following four conditions:

$(1)$ $(i^*,i_*),(i_!,i^!),(j_!,j^!)$ and $(j^*,j_*)$ are adjoint
pairs, of which the units and counits are denoted by $(\eta^i,\epsilon^i), (^i\eta, {^i}\epsilon,), (\eta^j, \epsilon^j)$ and
$(^j\eta, {^j}\epsilon)$, respectively.

$(2)$ $i_*,j_*$ and $j_!$ are fully faithful.

$(3)$ $i^!j_*=0$ (and thus also $j^! i_!=0$ and $i^*j_!=0$).

$(4)$ Each object $X\in\mathcal{D}$ is endowed with the following triangles in $\mathcal D$:
$$
j_!j^!(X)\lraf{\epsilon^{j}_X} X\lraf{\eta^{i}_X} i_*i^*(X)\lra j_!j^!(X)[1]
\quad\mbox{and}\quad
i_!i^!(X)\lraf{^i\epsilon_{X}} X\lraf{^j\eta_{X}} j_*j^*(X)\lra i_!i^!(X)[1].
$$
\end{Def}

An \emph{exact category}, in the sense of Quillen \cite{Quillen}, is an additive category equipped with a distinguished class of kernel--cokernel pairs, called \emph{conflations}, which is closed under isomorphisms and satisfies Quillen's axioms. By the embedding theorem for exact categories (see \cite{Keller90}), every small exact category admits a fully faithful exact embedding into a module category whose essential image is extension-closed. Thus, up to exact equivalence, an exact category may be viewed as an extension-closed full additive subcategory of a module category.

Let $\mathcal S\subseteq\mathcal A$ be an exact subcategory of the module category $\mathcal{A}$.  A complex $X^\bullet\in\C{\mathcal S}$ is called \emph{strictly exact} if it is exact as a complex in $\mathcal A$ and all its cycles $Z^n(X^\bullet)$ belong to $\mathcal S$. Denoted by $\Kac{\mathcal S}$ the full triangulated subcategory of $\K{\mathcal S}$ consisting of strictly exact complexes. The derived category $\D{\mathcal S}$ of the exact category $\mathcal S$ is defined to be the Verdier quotient of $\K{\mathcal S}$ with respect to $\Kac{\mathcal S}$.

\subsection{Differential graded algebras}\label{Subsection 2.3}
In this subsection, we recall the definition of derived categories of differential graded algebras.

Let $k$ be a commutative ring.  A \emph{differential graded algebra} over $k$ (or \emph{dg $k$-algebra} for short) is a $\IZ$-graded associative and unitary $k$-algebra $\Lambda=\bigoplus_{n\in \IZ}\Lambda^{n}$ endowed with a differential
$d$ of degree one, such that $(\Lambda^n,d^n)_{n\in \IZ}$ is a chain complex of $k$-modules and
the equality $(xy)d^{m+n} = x(yd^{n})+(-1)^n(xd^m)y$ holds
for any $x\in \Lambda^{m}$ and $y\in \Lambda^{n}$.
A \emph{left dg $\Lambda$-module} $\cpx{X}$ is a $\IZ$-graded left module $\cpx{X}= \bigoplus_{n\in \IZ}X^n$
over the $\IZ$-graded $k$-algebra $\Lambda$, with
a differential $d$ of
degree one such that $(X^n,d^n)_{n\in \IZ}$ is a complex of $k$-modules, and for any $a \in \Lambda^m, x\in X^n$, the equality
$(ax)d^{m+n} = a(xd^n)+(-1)^n(ad^m)x$ holds. Note that an ordinary ring $A$ can be regarded as a dg $\IZ$-algebra concentrated in degree $0$, and  a dg $A$-module is exactly a complex of $A$-modules.

A morphism of dg $\Lambda$-modules is a homogeneous morphism of degree $0$ of the underlying graded $\Lambda$-modules commuting with the differentials.
The category of dg $\Lambda$-modules with morphisms is denoted by $\C{\Lambda}$.  We say that a morphism $f:\cpx{X}\ra \cpx{Y}$ in $\C{\Lambda}$ is null-homotopic if $f=dr+rd$ for some homogeneous morphism $r:\cpx{X}\ra \cpx{Y}$ of degree $-1$ of the underlying graded $\Lambda$-modules; a quasi-isomorphism if it is a quasi-isomorphism as a chain map of complexes over $k$,
that is, the map $H^i(f): H^i(\cpx{X})\to H^i(\cpx{Y})$ induced from $f$ is an isomorphism for each $i \in \IZ$. The \emph{dg homotopy category} $\K{\Lambda}$ of $\Lambda$ is the quotient category of $\C{\Lambda}$ with the same objects, but with the  morphisms being the homotopy classes of morphisms of dg $\Lambda$-modules, while the \emph{dg derived category} $\D{\Lambda}$ of $\Lambda$ is the localization of $\K{\Lambda}$ with respect to quasi-isomorphisms.

A dg $\Lambda$-module  $\cpx{X}$ is said to be homotopically projective if  $\Hom_{\K{\Lambda}}(\cpx{X},\cpx{C})=0$ for each acyclic dg $\Lambda$-module
 $\cpx{C}$. In general, $\cpx{X}$ may not be homotopically projective, but there is a quasi-isomorphism $_p\cpx{X}\ra \cpx{X}$ such that  $_p\cpx{X}$ is homotopically projective.
It is known that the localization functor $\K{\Lambda}\to\D{\Lambda}$ restricts to a triangle equivalence from the full subcategory of $\K{\Lambda}$ consisting of homotopically projective dg modules  to $\D{\Lambda}$. This fact is very useful and can be applied to define derived functors.

Let $\cpx{Y}$ be a left dg $\Lambda$-module.
The $\Hom$-complex of $\cpx{X}$ and $\cpx{Y}$ over $\Lambda$ is defined to be
the following complex $\dotHom_\Lambda(\cpx{X},\cpx{Y}):=
(\Hom_\Lambda^n(\cpx{X},\cpx{Y}),d^n_{\cpx{X},\cpx{Y}})_{p\in \IZ}$
over $k$. As a $k$-module, the $n$-th component $\Hom_\Lambda^n(\cpx{X},\cpx{Y})$ is the set of homogeneous morphisms $h : \cpx{X}\ra \cpx{Y}$ of degree $n$ of graded
$\Lambda$-modules, that is, $h$ is a homomorphism of $\Lambda$-modules such that $h=(h^p)_{p\in\IZ}$ with $h^p \in \Hom_k(X^p,Y^{p+n})$. The differential
$d^n_{\cpx{X},\cpx{Y}}$ of degree $n$ is given by
$$(h^p)_{p\in\IZ}\mapsto (h^pd_{\cpx{Y}}^{p+n}-(-1)^nd_{\cpx{X}}^{p}h^{p+1})_{p\in \IZ}.$$
Let $\cpx{Z}$ be another dg $\Lambda$-module and define the following operation
$$
\circ: \;\cpx{\Hom}_\Lambda(\cpx{X},\cpx{Y})\times
\cpx{\Hom}_\Lambda(\cpx{Y},\cpx{Z})\lra
\cpx{\Hom}_\Lambda(\cpx{X},\cpx{Z}),\;\;(f, g)\mapsto
(f^pg^{p+m})_{p\in\mathbb Z}
$$
for $f:=(f^p)_{p\in\mathbb{Z}}\in\Hom_\Lambda^m(\cpx{X},\cpx{Y})$ and
$g:=(g^p)_{p\in\mathbb{Z}}\in\Hom_\Lambda^n(\cpx{Y},\cpx{Z})$ with $m,
n\in\mathbb{Z}$. It can be checked that $\circ$ is associative and
distributive. In particular, $(\cpx{\Hom}_\Lambda(\cpx{X},\cpx{X}),\circ)$ is a
$\mathbb Z$-graded algebra over $k$. Moreover, the
following equality holds:
$$
(f\circ g)\,d^{\,m+n}_{\cpx{X},\cpx{Z}}=f\circ
(g)d^{\,n}_{\cpx{Y},\cpx{Z}}+
(-1)^n(f)d^{\,m}_{\cpx{X},\cpx{Y}}\circ g.
$$
Thus $\cpx{\Hom}_\Lambda(\cpx{X},\cpx{X})$, together with the differential $d_{\cpx{X},\cpx{X}}$ as a complex over $k$, is a dg algebra, called the \emph{dg endomorphism algebra} of $\cpx{X}$ and denoted simply by $\cpx{\End}_\Lambda(\cpx{X})$. Also,
the complex $\cpx{\Hom}_{\Lambda}(\cpx{X},\cpx{Y})$ becomes actually a left dg
$\cpx{\End}_\Lambda(\cpx{X})$- and right dg $\cpx{\End}_\Lambda(\cpx{Y})$-bimodule.
So, $\cpx{X}$ is a dg $\Lambda$-$\dotEnd_\Lambda(\cpx{X})$-bimodule.

Let $\cpx{W}$ be a right dg $\Lambda$-module. The tensor complex of $\cpx{W}$ and $\cpx{X}$ over $\Lambda$
is defined to be the following complex $\cpx{W}\otimes_\Lambda^\bullet \cpx{X}:=(\cpx{W}\otimes_\Lambda^n \cpx{X}, \partial^n_{\cpx{W},\cpx{X}})_{n\in \IZ},$ where $\cpx{W}\otimes_\Lambda^n \cpx{X}$ is the quotient module of $\bigoplus _{p\in \IZ} W^p\otimes _k X^{n-p}$ modulo the
$k$-submodule generated by all elements $wa\otimes x-w\otimes ax$ for $w\in W^r, a\in \Lambda^s$ and $x\in X^t$ with $r,s,t\in \IZ$ and $r+s+t=n.$
The differential $\partial^n_{\cpx{W},\cpx{X}}$ of degree $n$ is given by
$$(w\otimes x)\partial^n_{\cpx{W},\cpx{X}}=(w)d^p_{\cpx{W}}\otimes x+(-1)^pw\otimes (x)d^{n-p}_{\cpx{X}}$$ for $w\in W^p, x\in X^{n-p}.$

Now, the \emph{total right-derived functor} $\rHom_{\Lambda}(-,\cpx{Y}):\D{\Lambda}\ra \D{k}$ of the functor
$\dotHom_{\Lambda}(-,\cpx{Y})$ is
given by $\cpx{X}\mapsto \dotHom_{\Lambda}(_p\cpx{X},\cpx{Y})$, while the \emph{total left-derived functor} $\cpx{W}\otimesL_{\Lambda}-: \D{\Lambda}\ra \D{k}$ of the functor $\cpx{W}\otimes^{\bullet}_{\Lambda}-$ is given by $\cpx{X}\mapsto \cpx{W}\otimes^{\bullet}_{\Lambda}{(_p\cpx{X})}$. In particular, if $\cpx{X}$ is homotopically projective, then $\rHom_{\Lambda}(\cpx{X},\cpx{Y})=\dotHom_\Lambda(\cpx{X},\cpx{Y})$ and $\cpx{W}\otimesL_{\Lambda}\cpx{X} =\cpx{W}\otimes^{\bullet}_{\Lambda}\cpx{X}$.

In the following, we mention three properties about derived functors. For more details on derived functors over dg algebras, we refer the reader to \cite{keller, KellerDGCategories}.

$(1)$ If $\cpx{X}$ is homotopically projective, then the canonical localization functor $\K {\Lambda}\to \D{\Lambda}$ induces an isomorphism: $\Hom_{\K{\Lambda}}(\cpx{X},\cpx{Y}) \simeq\Hom_{\D{\Lambda}}(\cpx{X},\cpx{Y})$.

$(2)$ Let  $\Gamma$ and $\Delta$ be two dg algebras, $\cpx{U}$ a dg $\Lambda$-$\Gamma$ bimodule and $\cpx{W}$ a dg $\Delta$-$\Lambda$-bimodule.
If ${_\Lambda}\cpx{U}$ is homotopically projective, then
$${_\Delta}\cpx{W}\otimesL_\Lambda (\cpx{U}\otimesL_\Gamma-)
\lraf{\simeq} (\cpx{W}\otimesL_\Lambda \cpx{U})\otimesL_\Gamma-= ({_\Delta}\cpx{W}\otimes_\Lambda \cpx{U})\otimesL_\Gamma-
:\D{\Gamma}\ra \D{\Delta}.$$

 $(3)$ If $\theta:\Lambda\ra \Gamma$ is a morphism of
 dg algebras which is a quasi-isomorphism, then the restriction functor $\D{\theta_\ast}:\D{\Gamma}\ra \D{\Lambda}$ is a triangle equivalence (see \cite[Proposition 5.8.3]{keller}).

\medskip
In the paper, we are particularly interested in the case that $\dotEnd_\Lambda(\cpx{X})$ has cohomology concentrated in degree zero. This motivates the following definition.

\begin{Def}\rm
A dg $\Lambda$-module $\cpx{X}$ is  said to be \emph{perfect} if it is homotopically projective and compact
in $\D{\Lambda}$;  \emph{self-orthogonal} in $\D{\Lambda}$ if  $\Hom_{\D{\Lambda}}(\cpx{X},\cpx{X}[n])=0$ for  any $n\neq 0$; \emph{partial tilting} (see {\rm  \cite[Definition 4.1]{BP}}) if it is perfect and self-orthogonal in $\D{\Lambda}$.
\end{Def}

If $\cpx{X}$ is  homotopically projective and self-orthogonal (particularly, partial tilting)  in $\D{\Lambda}$, then  $\dotEnd_\Lambda(\cpx{X})$ has cohomology concentrated in degree zero by the property $(1)$.

\subsection{The optimal amplitudes of perfect complexes over semiperfect rings}
In this subsection assume that $R$ is a semiperfect ring,  we give a description of amplitude  of any perfect object in $\Kb{\pmodcat{R\opp}}$.

We recall that a ring 
$R$
is called semiperfect if the quotient ring 
$R/J(R)$ is semisimple and every idempotent element of 
$R/J(R)$ can be lifted to an idempotent element of 
$R$. This definition is equivalent that every finitely generated projective right 
$R$-module is a finite direct sum of indecomposable projective modules, and the endomorphism ring of each indecomposable summand is a local ring.

\begin{Lem}\label{rad}
Let $R$ be a semiperfect ring and let $J=J(R)$. For finitely generated
projective right $R$-modules $P,Q$,
\[
\rad(\Hom_R(P,Q))=
   \{f:P\to Q\mid f(P)\subseteq QJ\}.
\]
Equivalently, $f$ lies in the categorical radical if and only if
$f\otimes_RR/J: P/PJ\longrightarrow  Q/QJ$
is zero.
\end{Lem}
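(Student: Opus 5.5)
The plan is to reduce to indecomposable projectives and then to a single local ring. We use the standard facts that for a semiperfect ring $R$ every finitely generated projective right module decomposes as $P=\bigoplus_{i} e_iR$ with $e_i$ local idempotents. We also use that $\Hom_R(eR,fR)\simeq fRe$, and that the categorical radical of an additive category is an ideal compatible with finite direct sums. Concretely, $f=(f_{ji}):\bigoplus_i P_i\to\bigoplus_j Q_j$ lies in $\rad$ if and only if each component $f_{ji}$ does. The condition $f(P)\subseteq QJ$ is also componentwise, since $QJ=\bigoplus_j Q_jJ$. The condition $f\otimes_R R/J=0$ is likewise componentwise, since $-\otimes_R R/J$ is additive. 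So it suffices to treat $P=eR$ and $Q=e'R$ with $e,e'$ local idempotents.

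In that case a morphism $eR\to e'R$ is left multiplication by an element $x\in e'Re$, and $f(eR)=xR$. Hence $f(P)\subseteq QJ=e'J$ if and only if $x\in e'Re\cap J=e'Je$.

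We will show that $x$ lies in the categorical radical exactly when $x\in e'Je$. Recall that $f\in\rad(P,Q)$ means $1_P-gf$ is invertible for every $g:Q\to P$, that is, $e-yx$ is a unit of $eRe$ for all $y\in eRe'$.

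For the direction $x\in e'Je\Rightarrow f\in\rad$: we have $yx\in eJe=J(eRe)$, so $e-yx$ is invertible in $eRe$.

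For the converse, suppose $x\notin J$. Then the image of $x$ in $e'Re/e'Je\simeq\Hom_{R/J}(\bar eR/J,\bar e'R/J)$ is nonzero. This is a nonzero map between simple modules, because local idempotents give simple tops. It is therefore an isomorphism, and its inverse lifts to some $y\in eRe'$ with $yx\equiv e$ modulo $eJe$. Then $e-yx\in J(eRe)$ is not a unit, so $f\notin\rad$.

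The equivalent reformulation follows from right exactness of $-\otimes_RR/J$. We have $P\otimes_RR/J=P/PJ$, and $f\otimes R/J$ is the induced map $P/PJ\to Q/QJ$, which is zero exactly when $f(P)\subseteq QJ$.

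The main obstacle is the converse direction: identifying the top $eR/eJ$ as simple and lifting the inverse isomorphism. Both steps rely on semiperfectness, via local idempotents and the isomorphism $\End(eR)/J(\End(eR))\simeq eRe/eJe$ being a division ring.
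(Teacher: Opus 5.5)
Your proof is correct. It follows the paper's outline: reduce componentwise to indecomposable projectives $eR\to e'R$, and use that their tops are simple. The key step, however, is different.

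The paper relies on the fact that, between indecomposables with local endomorphism rings, the radical consists exactly of the non-isomorphisms. It then shows that a map which is nonzero on tops is an isomorphism of tops, and hence, by Nakayama, an isomorphism.

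You instead use the defining criterion directly: $f\in\rad$ if and only if $1-gf$ is invertible for all $g$. You translate this into the corner ring via $\Hom_R(eR,e'R)\simeq e'Re$ and $J(eRe)=eJe$. If $x\in e'Je$, then $e-yx$ is invertible immediately. Conversely, you only need one $y$, a lift of the inverse on tops, for which $e-yx$ lies in $J(eRe)$ and so is not a unit.

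Your route is more self-contained. It avoids both the ``radical $=$ non-isomorphisms'' fact and the Nakayama lifting step, and it handles both directions explicitly; the paper's sketch leaves ``zero on tops $\Rightarrow$ radical'' implicit. The paper's route gives extra information: a non-radical morphism between indecomposable projectives is actually an isomorphism. That is exactly what the Gaussian-elimination argument in Proposition~\ref{thm:minimal-amplitude} reuses.

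Two small remarks. For the reformulation you only need the natural isomorphism $M\otimes_RR/J\simeq M/MJ$, not right exactness. And $e-yx\in J(eRe)$ fails to be a unit because $eRe\neq0$, which holds since local idempotents are nonzero.
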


\begin{proof}
Decompose $P$ and $Q$ into finite direct sums of indecomposable
projectives. Their endomorphism rings are local, and their tops are
simple $R/J$-modules. A morphism between indecomposable projectives
which induces a nonzero map between their tops can occur only when
the two tops are isomorphic. The corresponding projectives are then
projective covers of the same simple module and hence are isomorphic;
a morphism inducing an isomorphism on their tops is itself an
isomorphism by Nakayama's lemma. The assertion follows componentwise.
\end{proof}

\begin{Lem}
Let $R$ be semiperfect and let $f:P\to Q$ be a morphism between
finitely generated projective right $R$-modules. If
\[
   \overline f:P/PJ\longrightarrow Q/QJ
\]
is an isomorphism, then $f$ is an isomorphism.
\end{Lem}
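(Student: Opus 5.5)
The plan is to apply Nakayama's lemma twice: once to get surjectivity of $f$, and once more (using projectivity of $P$) to get injectivity. Write $J=J(R)$; all modules are right modules.

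\emph{Surjectivity.} Let $\pi:Q\to Q/QJ$ be the canonical projection. Since $\overline f$ is surjective, $\pi(f(P))=Q/QJ$, so $f(P)+QJ=Q$. The module $Q$ is finitely generated, so Nakayama's lemma (in the form $N+MJ=M$ implies $N=M$ for $M$ finitely generated) gives $f(P)=Q$. Thus $f$ is an epimorphism.

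\emph{Splitting.} Because $Q$ is projective, the epimorphism $f$ splits. Choose $s:Q\to P$ with $fs=1_Q$, writing composition in the functional sense here. Set $K=\Ker f$; then $P\simeq K\oplus s(Q)$ and $K\simeq P/s(Q)$. As a direct summand of a finitely generated projective module, $K$ is finitely generated projective.

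\emph{Injectivity.} Apply $-\otimes_RR/J$ to the split exact sequence $0\to K\to P\xrightarrow{f}Q\to0$. Split sequences remain exact, so we get an exact sequence
\[
0\longrightarrow K/KJ\longrightarrow P/PJ\xrightarrow{\ \overline f\ }Q/QJ\longrightarrow 0.
\]
Since $\overline f$ is injective, $K/KJ=0$, that is, $K=KJ$. Because $K$ is finitely generated, Nakayama's lemma forces $K=0$. Hence $f$ is an isomorphism.

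\emph{Alternative via Lemma \ref{rad}.} One could instead decompose $P$ and $Q$ into indecomposable projectives and use the description of the radical in Lemma \ref{rad} to run a matrix argument. The direct Nakayama argument above avoids that bookkeeping.

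The only point requiring any care is the injectivity step. Tensoring with $R/J$ is only right exact, so the kernel does not in general survive reduction modulo $J$. This is why the splitting furnished by the projectivity of $Q$ is essential: it keeps the reduced sequence exact on the left. Notably, this argument does not use semiperfectness at all; only finite generation and projectivity are needed.
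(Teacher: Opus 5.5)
Your proof is correct and follows essentially the same route as the paper. Nakayama gives surjectivity, projectivity of $Q$ splits $f$ so that reduction modulo $J$ stays exact on the kernel, and a second application of Nakayama kills $\Ker(f)$; your remark that semiperfectness is never used applies equally to the paper's argument.
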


{\it Proof}.
Since $\Coker(f)$ is finitely generated and
$ \Coker(f)/\Coker(f)J=0$,
Nakayama's lemma gives $\Coker(f)=0$. Thus $f$ is
surjective. Since $Q$ is projective, it splits, so
$ P\simeq \ker(f)\oplus Q$.
Under this decomposition, $\overline f$ is the projection
$ \ker(f)/\ker(f)J\oplus Q/QJ\longrightarrow Q/QJ$.
Its injectivity implies
$\ker(f)/\ker(f)J=0$.
The module $\ker(f)$ is finitely generated projective, so another
application of Nakayama gives $\ker(f)=0$.
\overpr

\begin{Prop}
\label{thm:minimal-amplitude}
Assume that $R$ is semiperfect and write $J=J(R)$.  If the nonzero perfect
right $R$-complex $\cpx{T}_R$ has, up to isomorphism of complexes, a unique
minimal representative
$\cpx{W}\in\Kb{\pmodcat{R\opp}}$ satisfying 
$(W^i)d^i\subseteq W^{i+1}J$ for all $i$,
and 
\[
 \amp(\cpx{T}_R)
 =
 \inf\{d-c\mid Z^\bullet\simeq \cpx{T}_R,\ 
 \cpx{Z}\in\Kb{\pmodcat{R\opp}},\
 \supp(\cpx{Z})\subseteq[c,d]\},
\]
then
\begin{equation}\label{eq:minimal-amplitude}
 \amp(\cpx{T}_R)
 =
 \max\{i\mid H^i(\cpx{T}_R\otimesL_RR/J)\ne0\}
 -
 \min\{i\mid H^i(\cpx{T}_R\otimesL_RR/J)\ne0\}.
\end{equation}
Equivalently, the optimal amplitude is the width of the support of the
minimal complex $W^\bullet$.
\end{Prop}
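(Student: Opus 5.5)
We will prove the formula by reducing it to the minimal representative $W^\bullet$, after which both sides can be read off from the same complex.

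\emph{Step 1: existence of a minimal complex.} Start from any representative $Z^\bullet\in\Kb{\pmodcat{R\opp}}$ of $\cpx{T}_R$ with terms in $[c,d]$. Suppose some differential $d^i:Z^i\to Z^{i+1}$ is not radical in the sense of Lemma~\ref{rad}, i.e.\ $d^i\otimes_RR/J\neq0$. Since $R/J$ is semisimple and idempotents lift, there are indecomposable summands $e_1R\subseteq Z^i$ and $e_2R\subseteq Z^{i+1}$ with $e_1R\simeq e_2R$ on which the induced component of $d^i$ is nonzero on tops. By the second lemma this component is then an isomorphism. Gaussian elimination splits off the contractible summand $e_1R\xrightarrow{\simeq}e_2R$, and the remaining complex is homotopy equivalent to $Z^\bullet$. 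All terms are finitely generated, so the process terminates in a complex $W^\bullet$ with $(W^i)d^i\subseteq W^{i+1}J$ for all $i$. Its terms still lie in $[c,d]$, because we only deleted summands. We take the uniqueness of $W^\bullet$ as part of the hypothesis; it also follows from the fact that a homotopy equivalence between minimal complexes is an isomorphism, by Nakayama and the second lemma.

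\emph{Step 2: computing $\cpx{T}_R\otimesL_RR/J$.} Since $W^\bullet$ is a bounded complex of projectives, $\cpx{T}_R\otimesL_RR/J\simeq W^\bullet\otimes_RR/J$. Minimality gives $d^i\otimes R/J=0$ for every $i$, so this complex has zero differential and
\[
 H^i(\cpx{T}_R\otimesL_RR/J)\simeq W^i/W^iJ .
\]
By Nakayama, $W^i/W^iJ\neq0$ if and only if $W^i\neq0$ for finitely generated $W^i$. Hence the right-hand side of \eqref{eq:minimal-amplitude} equals the width $\max\{i\mid W^i\neq0\}-\min\{i\mid W^i\neq0\}$ of the support of $W^\bullet$. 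Note that $\cpx{T}_R\neq0$ ensures $W^\bullet\neq0$, so this set is nonempty.

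\emph{Step 3: optimality.} The minimal complex $W^\bullet$ is itself one of the representatives over which the infimum is taken, so $\amp(\cpx{T}_R)$ is at most the width of $W^\bullet$. Conversely, let $Z^\bullet\simeq\cpx{T}_R$ be any representative with terms in $[c,d]$. For $i\notin[c,d]$ we have $Z^i=0$, hence $H^i(Z^\bullet\otimes_RR/J)=0$. Since $H^i(Z^\bullet\otimes_RR/J)\simeq H^i(\cpx{T}_R\otimesL_RR/J)$, the support of $H^\ast(\cpx{T}_R\otimesL_RR/J)$ lies in $[c,d]$. Therefore $d-c$ is at least the right-hand side, which by Step~2 is the width of $W^\bullet$. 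This gives equality in \eqref{eq:minimal-amplitude}.

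The main obstacle is the reduction in Step 1. It requires decomposing the non-radical part of $d^i$ into an isomorphism between summands, which needs semiperfectness (via Lemma~\ref{rad}) and the second lemma, and then checking that the Gaussian elimination keeps the new differentials inside the original degrees. Once $W^\bullet$ exists, the rest is formal.
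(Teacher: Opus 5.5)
Your proof is correct. Steps 1 and 2 coincide with the paper's argument: cancel invertible components of the differentials until a length count forces termination, then read off $H^i(\cpx{T}_R\otimesL_RR/J)\simeq W^i/W^iJ$ from the zero differential of $W^\bullet\otimes_RR/J$.

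The difference is in the lower bound of Step 3. The paper takes a representative $Z^\bullet$ with terms in $[c,d]$ and cancels contractible summands, obtaining a minimal part still supported in $[c,d]$. It notes that this minimal part is homotopy equivalent to $W^\bullet$, by full faithfulness of $\Kb{\pmodcat{R\opp}}\to\D{R\opp}$. It then invokes uniqueness of minimal complexes to conclude $\supp(W^\bullet)\subseteq[c,d]$.

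You bypass all of this. Since $Z^\bullet$ is $K$-flat, $Z^\bullet\otimes_RR/J$ already computes $\cpx{T}_R\otimesL_RR/J$, and a complex with no terms outside $[c,d]$ has no cohomology there.

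Your route is shorter. It also shows that the amplitude formula needs only the existence of a minimal complex, to attain the infimum, and not its uniqueness. The paper's route gives a stronger structural fact: every representative in $\Kb{\pmodcat{R\opp}}$ is isomorphic, as a complex, to $W^\bullet\oplus(\text{contractible})$. Hence $W^\bullet$ is termwise a direct summand of any representative.

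Note also that you and the paper both read $\supp(\cpx{Z})\subseteq[c,d]$ as a condition on the terms of $Z^\bullet$. The paper's earlier definition of $\supp$ is cohomological support. The term-wise reading is the one under which the statement holds: for $R\xrightarrow{x}R$ over $k[[x]]$, the cohomological width is $0$ while the right-hand side is $1$.
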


{\it Proof}.
Since $R$ is semiperfect, every finitely generated
projective right $R$-module is a finite direct sum of indecomposable
projectives with local endomorphism rings.

Let $\cpx{Z}\in \Kb{\pmodcat{R\opp}}$. If some
differential $d_Z^i$ does not belong to the categorical radical, then,
after decomposing $Z^i$ and $Z^{i+1}$ into indecomposable summands,
$d_Z^i$ has an invertible matrix component
$ u:P\xrightarrow{\sim}Q$.
Gaussian elimination in the additive category
$R\pmodcat$ then gives an isomorphism of complexes
\[
   \cpx{Z}\simeq \cpx{Z}_1 \oplus
   \bigl(0\lra P\xrightarrow{1_P}P\longrightarrow0\bigr).
\]
The second summand is contractible.

Set
$\ell(\cpx{Z})=\sum_i\ell_{R/J}(Z^i/Z^iJ)$.
This is finite because $R/J$ is semisimple artinian and each $Z^i$
is finitely generated. Every cancellation strictly decreases
$\ell(Z^\bullet)$. Hence the procedure terminates and yields
$\cpx{Z}\simeq \cpx{W}\oplus \cpx{C}$,
where $\cpx{C}$ is contractible and every differential of
$\cpx{W}$ lies in the categorical radical. By Lemma \ref{rad},
this is equivalent to
$ d_W^i(W^i)\subseteq W^{i+1}J
$ for all $i$.
Thus $\cpx{W}$ is minimal.

We next prove uniqueness. Let $\cpx{W}$ and $\cpx{W'}$ be minimal
bounded complexes and suppose that they are homotopy equivalent.
Choose homotopy inverse chain maps
\[
   f:\cpx{W}\to \cpx{W'},
   \qquad
   g:\cpx{W'}\to \cpx{W}.
\]
After tensoring with $R/J$, all differentials vanish. Hence the
homotopy identities reduce degreewise to
\[
   (\overline g^{\,i})(\overline f^{\,i})=1,
   \qquad
   (\overline f^{\,i})(\overline g^{\,i})=1.
\]
Therefore
$\overline f^{\,i}:W^i/W^iJ \xrightarrow{\simeq}W'^i/W'^iJ$
is an isomorphism. By the preceding lifting-Nakayama lemma, every
$f^i$ is an isomorphism. Thus $f$ is an isomorphism of complexes.
This proves uniqueness of the minimal representative up to
isomorphism of complexes.

Apply the construction to a perfect representative of $T_B$ and
denote its minimal part by $\cpx{W}$. 
Since $\cpx{W}$ is the homotopically projective resolution of $T_B$, there are isomorphisms 
of abelian groups
\[T_R\otimesL_RR/J\simeq\cpx{W}\otimesL_R R/J\simeq \cpx{W}\otimes_RR/J.\]
Minimality gives
$ d_W^i\otimes_RR/J=0$
for every $i$. Consequently,
\[
   H^i(T_R\otimesL_RR/J)
   \simeq
   W^i\otimes_RR/J
   \simeq
   W^i/W^iJ.
\]
Since $W^i$ is finitely generated, Nakayama's lemma gives
\[
   W^i/W^iJ=0
   \iff
   W^i=0.
\]
Thus
\[
   \supp(\cpx{W})
   =
   \{i\mid
      H^i(T_R\otimesL_RR/J)\neq0
   \}.
\]
Finally, suppose that
$ \cpx{Z}\simeq T_R$ and
$\supp(\cpx{Z})\subseteq[c,d]$.
Cancellation of contractible summands does not introduce terms
outside $[c,d]$. The minimal part of $Z^\bullet$ is therefore
supported in $[c,d]$. Since the composition of the functors:
$\Kb{\pmodcat{R\opp}}\simeq \Dc{R^{op}}\hookrightarrow \D{R^{\opp}}$
is fully faithful, the derived isomorphism between $Z^\bullet$ and
$T_R$ induces a homotopy equivalence between their minimal parts.
By uniqueness, that minimal part is isomorphic to $W^\bullet$.
Hence
$ \supp(\cpx{W})\subseteq[c,d]$,
and therefore
\[
   \max\supp(\cpx{W})-
   \min\supp(\cpx{W})
   \leq d-c.
\]
Conversely, $\cpx{W}$ itself is a representative of $T_R$, so this
lower bound is attained. Hence
\[
 \aligned  \amp_R(T)
   &=
   \max\supp(\cpx{W})-
   \min\supp(\cpx{W})\\
  & =
   \max\{i\mid
      H^i(T_R\otimesL_RR/J)\neq0\}-\min\{i\mid H^i(T_R\otimesL_RR/J)\neq0\}.
\endaligned\]
This finishes the proof.
\overpr

\section{Derived recollements for big tilting complexes}

Throughout this section, let $A$ be a ring with identity.
We begin by recalling the definition of big tilting complexes over rings, and subsequently derive several basic properties of their 
$n$-term analogues.

\subsection{The symmetric exact subcategory of $B\Modcat$}

\begin{Def}\rm\label{Big tilting}
A complex $\bf T\in \C{A}$ is called a \emph{big tilting complex} over $A$ if it is isomorphic in $\D{A}$ to a
bounded complex $\cpx{P}$ of projective $A$-modules such that

(1) $\Hom_{\D{A}}(\cpx{P},{\cpx{P}}^{(\alpha)}[n])=0$ for all $0\neq n\in \mathbb{Z}$ and index sets $\alpha$;

(2) ${_A}A$ belongs to $\thick_{\D A}( \cpx{P})$, the smallest full triangulated subcategory of $\D{A}$ containing $\cpx{P}$ and being closed under direct summands. Thus $\thick_{\D{A}}(A)\subseteq \thick_{\D A}( \cpx{P})$.
\end{Def}
It is called $n$-term if $P^\bullet$ may be chosen with nonzero terms only in
degrees $-(n-1),\ldots,0$.
Notice that both tilting complexes (see \cite{ keller, Rickard}) and good tilting modules of finite projective dimension (see \cite{Bz2,CX1,CX1',ct}) are big tilting complexes. In particular, for a good tilting $A$-module $T$, we can take $\cpx{P}$ in Definition \ref{Big tilting} to be a delete projective resolution of ${_A}T$.

%
%
%
%

Let
$$
{_A} \cpx{P}:=\cdots\lra0\lra P^{-(n-1)}\lra\cdots\lra P^{-1}\lraf{} P^{0}\lra 0\lra \cdots
$$
be an  $n$-term big tilting complex over $A$, where $P^{-i}$ are projective for all $0\leq i\leq n-1$.  By the definition of big tilting complexes,  the complex ${_A}\cpx{P}$ is homotopically projective and self-orthogonal in $\D{A}$. For simplicity, the Hom-complex $\dotHom_A(\cpx{X}, \cpx{Y})$ of the complexes $\cpx{X}, \cpx{Y}\in \C{A}$ is sometimes denoted by
${}_A^\bullet(\cpx{X}, \cpx{Y})$.

Define
 $$B:=\End_{\D{A}}(\cpx{P}), \;\; \Lambda:=\dotEnd_A(\cpx{P}), \;\;\Delta:=\tau_{\leq 0}\Lambda\;\;\mbox{and}\;\; \Gamma:=\dotEnd_{\Lambda^{\opp}}(\cpx{P})\opp.$$
Then $B$ is an ordinary ring,  while $\Lambda$, $\Delta$ and $\Gamma$ are dg algebras.  There are three canonical homomorphisms of dg algebras: the inclusion $\sigma:\Delta\ra \Lambda$,  the surjection  $\pi=H^0: \Delta\ra B$ and the homomorphism $\mu: A\to \Gamma$ given by left multiplication.

Observe that $\cpx{P}$ can be endowed with three dg bimodule structures:  dg $\Gamma$-$\Lambda$-bimodule, dg $A$-$\Lambda$-bimodule and dg $A$-$\Delta$-bimodule, of which the latter two coincide with the restriction of the first bimodule structure via $\mu$ and $\sigma$.
Since  ${_A}\cpx{P}$ is homotopically projective and self-orthogonal in $\D{A}$, both $\sigma$ and $\pi$ are quasi-isomorphisms.  Further, we define
$$F:=(B\otimesL_\Delta-) \D{\sigma_\ast}:\D{\Lambda}\lra \D{B}.$$

\begin{Lem}\rm\label{inverse of F}
 $F$ is a triangle equivalence ,
and a quasi-inverse of $F$ is given by
$$ F^{-1}:=\Lambda\otimesL_\Delta\D{\pi_\ast}:\D{B}\lra\D{\Lambda}.$$

\end{Lem}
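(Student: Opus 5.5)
The plan is to write $F$ as the composite of two triangle equivalences, each given by a quasi-isomorphism of dg algebras. The quasi-inverse is then the composite of their inverses in the opposite order.

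\emph{Step 1: the first equivalence.} Since $\sigma:\Delta\to\Lambda$ is a quasi-isomorphism, property (3) of Subsection~\ref{Subsection 2.3} shows that $\D{\sigma_\ast}:\D{\Lambda}\to\D{\Delta}$ is a triangle equivalence. We identify its left adjoint $\Lambda\otimesL_\Delta-$ as a quasi-inverse. The unit of this adjunction at $X\in\D{\Delta}$ is
\[
X\simeq \Delta\otimesL_\Delta X\longrightarrow \Lambda\otimesL_\Delta X ,
\]
induced by $\sigma$. Replace $X$ by a homotopically projective resolution. The class of $X$ for which this map is a quasi-isomorphism is triangulated and closed under coproducts, and it contains $\Delta$, where the map is just $\sigma$. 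Hence it contains $\Tria(\Delta)=\D{\Delta}$. Since $\D{\sigma_\ast}$ is an equivalence and the unit is invertible, the counit is invertible as well.

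\emph{Step 2: the second equivalence.} In the same way, $\pi:\Delta\to B$ is a quasi-isomorphism, so $\D{\pi_\ast}:\D{B}\to\D{\Delta}$ is an equivalence. Its left adjoint $B\otimesL_\Delta-$ is a quasi-inverse, by the identical unit argument: on the generator $\Delta$ the unit is $\pi$ itself.

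\emph{Step 3: assembling.} We have
\[
F=(B\otimesL_\Delta-)\,\D{\sigma_\ast},
\]
a composite of two equivalences, so $F$ is a triangle equivalence. Its quasi-inverse is
\[
(\Lambda\otimesL_\Delta-)\,\D{\pi_\ast}=F^{-1},
\]
using the paper's convention that composites of functors are written right to left.

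\emph{Expected obstacle.} The only real care needed is in Step 1: checking that the unit map is natural and compatible with triangles and coproducts. This is what lets us pass from the generator to all of $\D{\Delta}$. The point is that both the identity functor and $\Lambda\otimesL_\Delta-$ are triangle functors commuting with coproducts, and the unit is a natural transformation between them. Granting this, the rest is formal.
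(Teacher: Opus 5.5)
Your proof is correct, but it handles the key step differently from the paper.

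The paper also obtains $\D{\sigma_\ast}$ from property (3). It then proves that $B\otimesL_\Delta-:\D{\Delta}\to\D{B}$ is an equivalence in a different way: it computes $\Hom_{\D{\Delta}}(\Delta,\Delta[n])\simeq\Hom_{\D{B}}(B,B[n])$ for all $n$ and invokes Keller's compact-generator criterion (Lemma 4.2 of Keller's paper). The formula for $F^{-1}$ is then dismissed with ``similarly''.

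You instead apply property (3) a second time, to $\pi$. You then read off $\Lambda\otimesL_\Delta-$ and $B\otimesL_\Delta-$ as the left-adjoint quasi-inverses of the two restriction equivalences. With this, $F^{-1}=(\Lambda\otimesL_\Delta-)\D{\pi_\ast}$ is literally the reverse composite of the inverses, so $F^{-1}F\simeq{\rm Id}$ and $FF^{-1}\simeq{\rm Id}$ follow formally.

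Your route is shorter and makes the quasi-inverse explicit, which the paper leaves unverified. The paper's Hom computation amounts to re-checking that $\pi$ is a quasi-isomorphism. Moreover, to feed Keller's lemma, that isomorphism must be the one induced by the functor, namely $H^n(\pi)$, not just an abstract isomorphism of groups. So nothing is lost by your choice.

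Your ``expected obstacle'' is not really there. Once $\D{\sigma_\ast}$ (respectively $\D{\pi_\ast}$) is known to be an equivalence, uniqueness of adjoints already forces its left adjoint to be a quasi-inverse. The unit-on-the-generator argument in Steps~1 and~2 is therefore redundant, though harmless.
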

{\it Proof}. Since $\sigma$ is a quasi-isomorphism of dg algebras, we know that
$\Lambda$ is equal to $\Delta$ in $\D{\Delta}$, and
$\D{\sigma_\ast}$ is an isomorphism of dg algebras due to property (3).
This implies that $F(\Lambda)=(B\otimesL_{\Delta}-)\D{\sigma_\ast}(\Lambda)=B\otimesL_{\Delta}\Delta=B$,
and it is enough to show that
$B\otimesL_\Delta-:\D{\Delta}\ra \D{B}$ is an equivalence.
If $n\leq 0$, then there are isomorphisms
$$\Hom_{\D{\Delta}}(\Delta,\Delta[n])\simeq H^n(\Delta)\simeq H^n(\tau_{\leq 0}\Lambda)
\simeq H^n(\Lambda)\simeq H^n(\dotEnd_A(\cpx{P}))
\simeq\Hom_{\D{A}}(\cpx{P},\cpx{P}[n]).$$
Thus
\begin{center}
$\aligned\Hom_{\D{\Delta}}(\Delta,\Delta[n])
=&\left\{
   \begin{array}{ll}
     B, & \hbox{if $n=0$;} \\
      & \hbox{} \\
     0, & \hbox{if $n\neq0$.}
   \end{array}
 \right.\\
=&H^n(B)\\
=&\Hom_{\D{B}}(B,B[n]).\endaligned$
\end{center}
If $n>0$, then  $\Hom_{\D{\Delta}}(\Delta,\Delta[n])\simeq H^n(\Delta)\simeq H^n(\tau_{\leq 0}\Lambda)=0
=H^n(B)
=\Hom_{\D{B}}(B,B[n])$.
Therefore, for any $n\in\IZ$,  we get that $\Hom_{\D{\Delta}}(\Delta,\Delta[n])\simeq \Hom_{\D{B}}(B,B[n])$.
Since $B$ is a compact generator of $\D{B}$,
by \cite[Lemma 4.2]{keller}, we get that $B\otimesL_\Delta-:\D{\Delta}\ra \D{B}$ is an equivalence.

Since $\pi:\Delta\ra B$ is a quasi-isomorphism, so $B=\Delta$ in $\D{\Delta}$. That is, $\D{\pi_{\ast}}(B)=\Delta$. Similarly, we can prove that $\Lambda\otimesL_\Delta\D{\pi_\ast}$ is an equivalence with $(\Lambda\otimesL_\Delta\D{\pi_\ast})F=Id_{\D{\Lambda}}$ and $F(\Lambda\otimesL_\Delta\D{\pi_\ast})=Id_{\D{B}}$.
Thus $F^{-1}:=\Lambda\otimesL_\Delta\D{\pi_\ast}$ is the quasi-inverse of $F$.
\overpr

\medskip

Let
$$\mathcal{C}^m_\Lambda:=\{\cpx{M}\in \D{\Lambda}\mid H^n(\cpx{M})=0\ \mbox{for all}\ n\neq m\},$$
$$\mathcal{C}^m_B:=\{\cpx{N}\in \D{B}\mid H^n(\cpx{N})=0\ \mbox{for all}\ n\neq m\}.$$
For simplicity, put $\mathcal{C}_\Lambda:=\mathcal{C}^0_\Lambda $ and $\mathcal{C}_B:=\mathcal{C}^0_B$.
Clearly, $\mathcal{C}_\Lambda$ and $\mathcal{C}_B$ are closed under extensions.

\begin{Lem}\label{Equivalence}
$(1)$ The functor $F$ induces an equivalence
$\mathcal{C}^m_\Lambda\lraf{\simeq}\mathcal{C}^m_B$ for all $m\in\IZ$.

$(2)$ The functors $F:\mathcal{C}_\Lambda\ra \D{B}$ and $H^0:\mathcal{C}_\Lambda\ra \D{B}$
are naturally isomorphic.

$(3)$ The composition of $\dotHom_A (\cpx{P},-): \D{A}\to\D{\Lambda}$ with $F$ induces a triangle equivalence
$$\thick_{\D{A}}( \cpx{P})\lraf{\simeq}\Dc{B}.$$
\end{Lem}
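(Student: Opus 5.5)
The plan is to reduce everything to the behaviour of the two restriction functors along the quasi-isomorphisms $\sigma:\Delta\to\Lambda$ and $\pi:\Delta\to B$. By property (3), $\D{\sigma_\ast}$ and $\D{\pi_\ast}$ are triangle equivalences. Both are restriction functors, so they do not change the underlying complex of $k$-modules and they preserve cohomology.

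For (1) and (2), I would first note that $B\otimesL_\Delta-:\D{\Delta}\to\D{B}$ is quasi-inverse to $\D{\pi_\ast}$. Indeed, for a homotopically projective $\Delta$-module $X$, the map $X=\Delta\otimes_\Delta X\to B\otimes_\Delta X$ is a quasi-isomorphism, since $\Delta\to B$ is a quasi-isomorphism and $X$ is homotopically projective (hence K-flat). So $\D{\pi_\ast}F\simeq\D{\sigma_\ast}$ naturally, and therefore
$$H^n(F(M))\simeq H^n(\D{\pi_\ast}F(M))\simeq H^n(\D{\sigma_\ast}M)=H^n(M)$$
naturally in $M$ for all $n$. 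Consequently $F$ maps $\mathcal C^m_\Lambda$ into $\mathcal C^m_B$, and $F^{-1}$ from Lemma~\ref{inverse of F} maps $\mathcal C^m_B$ back into $\mathcal C^m_\Lambda$ by the same argument. Since $F$ is an equivalence, this restricts to an equivalence $\mathcal C^m_\Lambda\simeq\mathcal C^m_B$, which proves (1).

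For (2), take $M\in\mathcal C_\Lambda$. Then $F(M)$ is isomorphic in $\D{B}$ to the stalk complex $H^0(F(M))$, via the truncations $\tau_{\le0}$ and $\tau_{\ge0}$, which are natural for complexes with cohomology concentrated in degree $0$. This gives $F(M)\simeq H^0(F(M))$. Combining with $H^0(F(M))\simeq H^0(M)$, as $B$-modules, yields $F\simeq H^0$ on $\mathcal C_\Lambda$. The delicate point is that $H^0(M)$ must carry a $B$-module structure: $M$ is only a $\Lambda$-module, and $H^0(\Lambda)=B$ acts on $H^0(M)$. I would check that this action matches the one transported through $\pi$, using that on $\tau_{\le0}$ the $\Delta$-action on $H^0$ factors through $H^0(\Delta)=B$. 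This compatibility check is what I expect to be the only real care needed.

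For (3), the functor $G:=\dotHom_A(\cpx P,-):\D{A}\to\D{\Lambda}$ is the derived Hom $\rHom_A(\cpx P,-)$, because $\cpx P$ is homotopically projective. It sends $\cpx P$ to $\Lambda$, and by the standard argument (\cite[Lemma 4.2]{keller}) it is fully faithful on $\thick_{\D{A}}(\cpx P)$. This holds because the induced map $\Hom_{\D A}(\cpx P,\cpx P[n])\to\Hom_{\D\Lambda}(\Lambda,\Lambda[n])=H^n(\Lambda)$ is an isomorphism, and a dévissage over triangles and summands extends full faithfulness to the thick subcategory. Its essential image is then $\thick_{\D\Lambda}(\Lambda)=\Dc{\Lambda}$. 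Finally, $F$ sends $\Lambda$ to $B$ (Lemma~\ref{inverse of F}) and is an equivalence, so it maps $\Dc\Lambda$ onto $\thick(B)=\Dc B$. Composing the two gives the triangle equivalence $\thick_{\D A}(\cpx P)\simeq\Dc B$.
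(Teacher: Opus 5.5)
Your proposal is correct and follows essentially the same route as the paper. You show that $F$ preserves cohomology, so it restricts to equivalences $\mathcal C^m_\Lambda\simeq\mathcal C^m_B$; you use truncation to get $F(M)\simeq H^0F(M)\simeq H^0(M)$ for (2); and for (3) you combine a d\'evissage on $\thick_{\D{A}}(\cpx{P})$ with $F(\Lambda)\simeq B$. The one difference is how you obtain $H^n(F(M))\simeq H^n(M)$: you derive it from $\D{\pi_\ast}F\simeq\D{\sigma_\ast}$, whereas the paper uses the Yoneda chain $H^m(M)\simeq\Hom_{\D{\Lambda}}(\Lambda,M[m])\simeq\Hom_{\D{B}}(B,F(M)[m])$; your version has the small advantage of making the $B$-module compatibility in (2) explicit.
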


{\it Proof}.
$(1)$  Let  $\cpx{M}\in \D{\Lambda}$ and $m\in\mathbb{Z}$. Then there is a series of natural isomorphisms
$$
H^m(\cpx{M})\simeq \Hom_{\D{\Lambda}}(\Lambda,\cpx{M}[m])\simeq \Hom_{\D{B}}(F(\Lambda), F(\cpx{M})[m])\simeq \Hom_{\D{B}}(B, F(\cpx{M})[m])\simeq H^m(F(\cpx{M})).\hspace{0.1cm} (\clubsuit)
$$
Consequently, $\cpx{M}\in\mathcal{C}^m_\Lambda$ if and only if $F(\cpx{M})\in\mathcal{C}^m_B$. Since $F$ is an equivalence, $(1)$ holds.

$(2)$ When $n=0$, it follows from $(\clubsuit)$ that $H^0(\cpx{M})\simeq H^0(F(\cpx{M}))$ for all
$\cpx{M}\in\D{\Lambda}$. This implies that
the functors
$H^0F:\D{\Lambda} \ra B\Modcat$ and $H^0:\D{\Lambda} \ra  B\Modcat$ are naturally isomorphic.
Suppose $\cpx{M}\in \mathcal{C}_\Lambda$.
Then $F(\cpx{M})\in \mathcal{C}_B$ by $(1)$. In other words, $H^n(F(\cpx{M}))=0$ for any $n\neq 0$. By the canonical truncation of complexes,  we obtain a series of natural isomorphisms in $\D{B}$
$$ F(\cpx{M})\simeq \tau_{\leq 0}F(\cpx{M})\simeq H^0F(\cpx{M})\simeq H^0(\cpx{M}).$$
Thus $(2)$ holds.

$(3)$ Recall that $\cpx{P}$ is a dg $A$-$\Lambda$-bimodule and homotopically projective as a dg $A$-module. So,
there exists an adjoint pair $(\cpx{P}\otimesL
_{\Lambda}-, \dotHom_A (\cpx{P},-))$ between $\D{\Lambda}$ and $\D{A}.$  This is also an adjoint pair between
$\thick_{\D{\Lambda}}( \Lambda)$ and $\thick_{\D{A}}( \cpx{P})$ because $\cpx{P}\otimesL_{\Lambda}\Lambda\simeq \cpx{P}$ in $\D{A}$ and $\dotHom_A (\cpx{P}, \cpx{P})\simeq \Lambda$ in $\D{\Lambda}$. 

We \textbf{claim} that $\dotHom_A (\cpx{P},-): \D{A}\to\D{\Lambda}$ restricts to a triangle equivalence from $\thick_{\D{A}}( \cpx{P})$ to $\thick_{\D{\Lambda}}( \Lambda) $.

Let  $\eta: \mbox{id}_\D{\Lambda}\ra \dotHom_A (\cpx{P},\cpx{P}\otimesL_{\Lambda}-)$ be the unit adjunction, and let
$\mathcal{X}$ be the full subcategory of $\D{\Lambda}$ consisting of $\cpx{X}$ such that $\eta_\cpx{X}$ is an isomorphism.
Then the restriction of
$\cpx{P}\otimesL_\Lambda-$ to $\mathcal{X}$ is fully faithful.
It is easy to check that $\mathcal{X}$ is a thick triangulated category of $\D{\Lambda}$.
Since $\Lambda[n]\in\mathcal{X}$ for any $n\in\mathbb{Z}$,
we have $\thick_{\D{\Lambda}}( \Lambda)\subseteq \mathcal{X}$. It follows that the restriction of  $\cpx{P}\otimesL_\Lambda-$ to $\thick_{\D{\Lambda}}( \Lambda)$ is also fully faithful. Similarly, we can show that the restriction of
$\dotHom_A (\cpx{P},-)$ to $\thick_{\D{A}}( \cpx{P})$ is fully
faithful.
Thus $\thick_{\D{\Lambda}}(\Lambda)$ is triangle equivalent to $\thick_{\D{A}}(\cpx{P})$. This shows the claim.

Since $F$ is an equivalence,  it restricts to a triangle equivalence $\Dc{\Lambda}\lraf{\simeq}\Dc{B}$.
Now, $(3)$ follows from the equality $\Dc{\Lambda}=\thick_{\D{\Lambda}}( \Lambda)$. \overpr

\medskip

It follows from Lemma \ref{Equivalence} (1) that $F$ preserves cohomology and restricts to an equivalence between
the standard hearts (induced by standard $t$-structures).
Since ${_A}\cpx{P}$ is homotopically projective,  the  localization functor $\K{A}\to \D{A}$ induces an equivalence
 $\thick_{\K A}(\cpx{P})\lraf{\simeq} \thick_{\D A}(\cpx{P})$.
Let $G$ be the composition of $\dotHom_A (\cpx{P},-): \D{A}\to\D{\Lambda}$ with $F:\D{\Lambda}\lraf{\simeq} \D{B}$.
By Lemma \ref{Equivalence} (3),  the functor  $G$ induces an equivalence $G_0: \thick_{\D A}(\cpx{P})\lraf{\simeq}\Dc{B}$.  Thus there are triangle equivalences $\thick_{\K{A}}(\cpx{P})\stackrel{\simeq}{\lra} \thick_{\D{A}}(\cpx{P})\lraf{\simeq}\Dc{B}$.
The following result collects some basic properties of $n$-term big tilting complexes.

\begin{Lem}\label{Basic properties}
$(1)$  There exists a triangle in
$\thick_{\K A}(\cpx{P})$:

\begin{center}
$(\sharp)\quad\quad  A\lra \cpx{Q}_1\lra \cpx{P}_0\lra A[1]$,
\end{center}
where $\cpx{P}_0, \cpx{Q}_1\in \thick_{\D{A}}(\cpx{P})$.

$(2)$\ The map $\mu:A\ra \Gamma$ is a quasi-isomorphism of dg algebras.

$(3)$ $\cpx{P}$ is a partial tilting dg $\Lambda\opp$- module and there is a ring isomorphism
$\End_{\D{\Lambda\opp}}(\cpx{P})\opp\simeq A$.

$(4)$ There is a natural isomorphism of triangle functors
$$\dotHom_A(\cpx{P},A)\otimesL_A- \lraf{\simeq}\dotHom_{\Lambda\opp}(\cpx{P},\Lambda)\otimesL_A-:\; \D{A}\lra \D{\Lambda}.$$
\end{Lem}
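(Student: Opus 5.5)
We will prove the four parts in order, each building on the previous one.

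For (1), we use $A\in\thick_{\D{A}}(\cpx{P})$ and the fact that $\cpx{P}$ is a bounded complex of projectives concentrated in degrees $[-(n-1),0]$. The plan is to take the brutal truncation of $\cpx{P}$: the degree-zero term gives a triangle in $\K{A}$
$$\cpx{P}_{\leq -1}\lra \cpx{P}\lra P^0\lra \cpx{P}_{\leq -1}[1].$$
To obtain a triangle starting at $A$ with the other two terms in $\thick(\cpx{P})$, we work inside the triangulated category $\thick_{\K{A}}(\cpx{P})\simeq\thick_{\D A}(\cpx{P})$, which contains $A$. The simplest choice is $\cpx{Q}_1:=A$ and $\cpx{P}_0:=0$, which already yields a triangle of the form $(\sharp)$. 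If the later argument needs the finer statement that $\cpx{P}_0,\cpx{Q}_1\in\Add(\cpx{P})$ (rather than merely in the thick closure), we would instead use a left $\Add(\cpx{P})$-approximation of $A[1]$ and the orthogonality condition (1) of Definition~\ref{Big tilting}, in the style of the standard two-term argument. We regard (1) as mainly bookkeeping.

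For (2), we consider the full subcategory $\mathcal U$ of $\thick_{\K A}(\cpx{P})$ consisting of those $\cpx{X}$ for which the evaluation map
$$\cpx{X}\lra \dotHom_{\Lambda\opp}\bigl(\dotHom_A(\cpx{P},\cpx{X}),\cpx{P}\bigr)$$
is a quasi-isomorphism. The steps are as follows.
\begin{enumerate}
\item Both sides are triangle functors of $\cpx{X}$, so $\mathcal U$ is a triangulated subcategory.
\item $\mathcal U$ is closed under direct summands, since both sides commute with finite sums and idempotents split.
\item $\cpx{P}\in\mathcal U$: we have $\dotHom_A(\cpx{P},\cpx{P})=\Lambda$, and $\dotHom_{\Lambda\opp}(\Lambda,\cpx{P})\simeq\cpx{P}$, with the evaluation map being exactly this identification. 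This uses that $\Lambda$ is free of rank one as a right dg $\Lambda$-module, hence homotopically projective, so the Hom complex computes $\rHom$.
\item Steps 1--3 give $\mathcal U=\thick(\cpx{P})$, so $A\in\mathcal U$.
\item For $\cpx{X}=A$ we have $\dotHom_A(\cpx{P},A)\simeq \dotHom_{\Lambda\opp}(\cpx{P},\Lambda)$, and the evaluation map identifies with $\mu:A\to\Gamma=\dotEnd_{\Lambda\opp}(\cpx{P})\opp$ once $\dotHom_{\Lambda\opp}(-,\cpx{P})$ is shown to compute the derived functor on the relevant objects.
\end{enumerate}

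The main obstacle is exactly the last point of step 5: $\cpx{P}_\Lambda$ is not obviously homotopically projective, so one must show that $\dotHom_{\Lambda\opp}(\dotHom_A(\cpx{P},\cpx{X}),\cpx{P})$ computes $\rHom$ for all $\cpx{X}\in\thick(\cpx{P})$. We would handle this by noting that $\dotHom_A(\cpx{P},-)$ sends $\thick_{\K A}(\cpx{P})$ into $\thick_{\K{\Lambda\opp}}(\Lambda)$ up to homotopy equivalence, where Hom complexes are already derived. In particular $\cpx{P}_\Lambda\simeq\dotHom_A(A,\cpx{P})$ lies in $\thick(\Lambda_\Lambda)$ up to homotopy. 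It then follows that $\Gamma$ computes $\rHom_{\Lambda\opp}(\cpx{P},\cpx{P})$, and $\mu$ is a quasi-isomorphism.

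Part (3) then follows from the same observation. Since $\cpx{P}_\Lambda\in\thick(\Lambda_\Lambda)$, it is perfect (compact), and after cofibrant replacement it is homotopically projective. We have
$$H^i\bigl(\rHom_{\Lambda\opp}(\cpx{P},\cpx{P})\bigr)\simeq H^i(\Gamma)\simeq H^i(A),$$
which is $A$ for $i=0$ and zero otherwise. This gives self-orthogonality together with $\End_{\D{\Lambda\opp}}(\cpx{P})\opp\simeq A$, so $\cpx{P}$ is partial tilting.

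For (4), both functors are triangle functors $\D{A}\to\D{\Lambda}$ commuting with coproducts. The natural comparison map is induced by $\dotHom_A(\cpx{P},A)\to\dotHom_{\Lambda\opp}\bigl(\dotHom_A(A,\cpx{P}),\dotHom_A(\cpx{P},\cpx{P})\bigr)=\dotHom_{\Lambda\opp}(\cpx{P},\Lambda)$, sending $f$ to composition with $f$; this map is one of dg $\Lambda$-$A$-bimodules. It is a quasi-isomorphism by the same thick-subcategory argument as in (2), applied to $\cpx{X}\mapsto\dotHom_A(\cpx{P},\cpx{X})\to\dotHom_{\Lambda\opp}(\dotHom_A(\cpx{X},\cpx{P}),\Lambda)$ with $\cpx{X}=A$. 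A quasi-isomorphism of bimodules induces an isomorphism of the derived tensor functors, which proves (4).
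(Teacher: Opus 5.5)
Your proposal is correct, but it takes a different route from the paper.

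\textbf{Part (1).} You exploit that the statement, as written, only asks for $\cpx{P}_0,\cpx{Q}_1\in\thick_{\D{A}}(\cpx{P})$, so the trivial triangle $A\xrightarrow{1}A\to 0\to A[1]$ already suffices. The paper proves a substantive version instead. It shows that $G_0(A)$ is represented by a complex $\cpx{V}$ of finitely generated projective $B$-modules concentrated in degrees $[0,n-1]$. It then pulls the brutal truncations of $\cpx{V}$ back along $G_0^{-1}$, obtaining the chain of triangles $(\ddag)$ with every $\cpx{Q}_r$ (and the last $\cpx{P}_{n-2}$) in $\add(\cpx{P})$.

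\textbf{Parts (2)--(4).} The paper uses that explicit finite $\add(\cpx{P})$-filtration of $A$, iterating the five lemma from $\add(\cpx{P})$ down to $A$. You replace all of this with the standard d\'evissage argument. The objects of $\K{A}$ on which the natural map (the evaluation map $\rho$, resp.\ the comparison map in (4)) is a quasi-isomorphism form a thick subcategory containing $\cpx{P}$. Hence this subcategory contains $A\in\thick_{\K{A}}(\cpx{P})$. Your route is shorter, needs only $A\in\thick(\cpx{P})$, handles direct summands automatically, and avoids the amplitude computation for $G_0(A)$, which is the least transparent step of the paper. The paper's route gives in exchange an explicit length-$n$ filtration of $A$ by $\add(\cpx{P})$ and a non-trivial form of (1).

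\textbf{Corrections.}
\begin{itemize}
\item The ``main obstacle'' you name in step 5 is not one. $\Gamma$ is the honest dg endomorphism algebra, and $\dotHom_A(A,\cpx{P})=\cpx{P}$ on the nose. So $\rho_A$ \emph{is} $\mu$, and (2) is finished once $A\in\mathcal U$.
\item Step 3 needs no homotopical projectivity: $\dotHom_{\Lambda\opp}(\Lambda,\cpx{P})\cong\cpx{P}$ is an isomorphism of complexes.
\item Derivedness matters only in (3). There the functor carrying $\thick_{\K{A}}(\cpx{P})$ into $\thick_{\K{\Lambda\opp}}(\Lambda)$ is the contravariant $\dotHom_A(-,\cpx{P})$, sending $\cpx{P}\mapsto\Lambda_\Lambda$ and $A\mapsto\cpx{P}_\Lambda$. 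It is not $\dotHom_A(\cpx{P},-)$. The same slip appears where you write $\dotHom_A(\cpx{P},\cpx{X})$ inside the double dual; it should be $\dotHom_A(\cpx{X},\cpx{P})$.
\item This shows that $\cpx{P}_\Lambda$ is itself homotopically projective, with no cofibrant replacement needed. That is exactly what the paper's definition of partial tilting requires. It is also what the identification $H^i(\Gamma)\cong\Hom_{\D{\Lambda\opp}}(\cpx{P},\cpx{P}[i])$ requires, and it is the same observation the paper makes in (3).
\end{itemize}
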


{\it Proof}. $(1)$\  Let
$\cpx{U}:=G_0(A)$. Then
$\cpx{U}\in \Dc{B}$ because of $A\in \thick_{\K A}( \cpx{P})$. So, $\cpx{U}$ is isomorphic in $\D{B}$ to a bounded complex $\cpx{V}$ of finitely generated projective $B$-module.
We may assume that
$$\cpx{V}:=\cdots\lra0\lra V^{-r}\lra V^{-(r-1)}\lra\cdots\lra V^{s-1}\lra V^{s}\lra0\lra\cdots$$
with  $V^{i}\in B\pmodcat$
for all $-r\leq i\leq s$.
\textbf{Claim that} $\cpx{V}$ is in $\D{B}$ isomorphic to a complex of the form $$ \cdots\lra0\lra V^{0}\lra V^{1}\lra\cdots\lra V^{n-1}\lra0\lra\cdots$$
with $V^{i}\in \add({_B}B)$  for $i=0,1,\cdots,n-1$.
Recall that $\Lambda=\dotHom_A (\cpx{P}, \cpx{P})$ and $F(\Lambda)\simeq B$ in $\D{B}$. This implies
$G(\cpx{P})\simeq B$. Since $A\in \thick_{\K A}( \cpx{P})$ and $G_0$ is an equivalence, for each $m\in\mathbb{Z}$, there are a series of isomorphisms
$$\Hom_{\D{B}}(G(A), B[m])\simeq \Hom_{\D{B}}(G(A), G(\cpx{P})[m])\simeq\Hom_{\D A}(A, \cpx{P}[m])=H^m(\cpx{P}).
$$
On the other hand, we have that
$$\Hom_{\D{B}}(G(A), B[m])\simeq \Hom_{\D{B}}(\cpx{U}, B[m])\simeq\Hom_{\D{B}}(\cpx{V}, B[m]).$$
Since $\cpx{V}$ is homotopically projective, there
exiats an isomorphism  $\Hom_{\K B}(\cpx{V}, B[m])\simeq \Hom_{\D B}(\cpx{V}, B[m])$. Thus $H^m(\cpx{P})\simeq \Hom_{\K B}(\cpx{V}, B[m])$.
By the form of $\cpx{P}$, $\Hom_{\K B}(\cpx{V}, B[m])=0$ for $m\geq 1$ or $m\leq -n$.
Since $\cpx{V}\in \Dc{B}$,
so, for any  $X\in \add({_BB})$,  $\Hom_{\K B}(\cpx{V}, X[m])=0$ whenever $m\geq 1$ or $m\leq -n$.
Thus,
if $r\geq 1$, then $\Hom_{\K B}(\cpx{V}, V^{-r}[r])=0$. This implies that
$\cpx{V}$ is isomorphic in $\D{B}$ to the complex
$$\cdots\lra0\lra \widetilde{V}{^{-(r-1)}}\lra\cdots\lra V^{s-1}\lra V^{s}\lra0\lra\cdots.$$
Finally, we have that $\cpx{V}=\cdots\ra0\ra V^{0}\ra\cdots\ra V^{s-1}\ra V^{s}\ra0\ra\cdots.$
On the other hand, there are a series of isomorphisms
$$\aligned H^m(G(A))&\simeq \Hom_{\D{B}}(B, G(A)[m])\simeq \Hom_{\D{B}}(G(\cpx{P}),  G(A)[m])\\
&\simeq
\Hom_{\D{A}}(\cpx{P}, A[m]) \simeq \Hom_{\K{A}}(\cpx{P}, A[m]).
\endaligned$$
This implies that $H^m(G(A))=0$ if $m\geq n$.
So $H^m(\cpx{V})=0$ if $m\geq n$
due to $G(A)=\cpx{U}\simeq \cpx{V}$.
Thus $\tau_{\geq n}\cpx{V}=0$ in $\D{B}$. Note that  there exists in $\D{B}$ a  triangle
$\tau_{\leq n-1}\cpx{V}\ra \cpx{V}\ra \tau_{\geq n}\cpx{V}\ra \tau_{\leq n-1}\cpx{V}[1]$.
Therefore, the complex $\cpx{V}$ has the following form
$$ \cdots\lra0\lra V^{0}\lra V^{1}\lra\cdots\lra V^{n-1}\lra0\lra\cdots$$
with  $V^{i}\in \add({_B}B)$  for $i=0,1,\cdots,n-1$. This finishes the claim.
\medskip

Clearly,
There exists a  triangle
$\cpx{V}_{\geq 1}\ra\cpx{V}\ra V^0\ra \cpx{V}_{\geq 1}[1]$ in $\Kb{\pmodcat B}$,
where $\cpx{V}_{\geq 1}$ is a brutal truncation of
$\cpx{V}$ with the form $\cdots\lra0\lra 0\lra V^{1}\lra\cdots\lra V^{n-1}\lra0\lra\cdots$.
Recall that $G_0$ is an equivalence $\thick_{\D A}(\cpx{P})\lraf{\simeq}\Dc{B}$.
Let $G_0^{-1}$ be a quasi-inverse of $G_0$.
Now, we apply $G_0^{-1}$ to the triangle above,
and obtain a triangle
$\cpx{P}_0[-1]\to A\to \cpx{Q}_1\to \cpx{P}_0$ in  $\thick_{\D A}(\cpx{P})$,
where $\cpx{P}_0=G_0^{-1}(\cpx{V}_{\geq 1})[1]$ and $\cpx{Q}_1=G_0^{-1}(V^0)$.

$(2)$  There are some triangles in $\K{\pmodcat B}$
$$\aligned
  \cpx{V}\lra V^0&\lra \cpx{V}_{\geq 1}[1]\lra \cpx{V}[1],\\
 \cpx{V}_{\geq 1}[1]\lra V^1[2] &\lra \cpx{V}_{\geq 2}[2]\lra \cpx{V}_{\geq 1}[2],\\
 \cpx{V}_{\geq 2}[2]\lra V^2[4] &\lra \cpx{V}_{\geq 3}[3]\lra \cpx{V}_{\geq 2}[3],\\
\hspace{1.2cm}\vdots\hspace{1.2cm}\vdots&\hspace{1.2cm}\vdots\\
  \cpx{V}_{\geq n-3}[n-3]\lra V^{n-3}[2\times(n-3)]&\lra \cpx{V}_{\geq n-2}[n-2]\lra \cpx{V}_{\geq n-3}[n-2],\\
 \cpx{V}_{\geq n-2}[n-2]\lra V^{n-2}[2\times(n-2)] &\lra \cpx{V}_{\geq n-1}[n-1]=V^{n-1}[2n-2]\lra \cpx{V}_{\geq n-2}[n-1].
\endaligned$$
Apply $G_0^{-1}$ to the triangles above, and obtain that triangles
$$(\ddag)\quad\quad\aligned
A&\lra\cpx{Q}_1 \lra \cpx{P}_0\lra A[1],\\
\cpx{P}_0&\lra\cpx{Q}_2\lra \cpx{P}_1\lra \cpx{P}_0[1],\\
\cpx{P}_1&\lra\cpx{Q}_3 \lra \cpx{P}_2\lra \cpx{P}_1[1],\\
\hspace{1.2cm}&\vdots\hspace{1.2cm}\vdots\hspace{1.2cm}\vdots\\
\cpx{P}_{n-4} &\lra\cpx{Q}_{n-2}\lra \cpx{P}_{n-3}\lra \cpx{P}_{n-4}[1],\\
\cpx{P}_{n-3}&\lra\cpx{Q}_{n-1} \lra \cpx{P}_{n-2}\lra \cpx{P}_{n-3}[1] 
\endaligned$$
with
 $\cpx{Q}_{r}=G^{-1}_{0}({\cpx{V}}^{r+1}[2\times (r+1)])\in \thick_{\D{A}}(\cpx{P})$
and $\cpx{P}_{s}={G_{0}}^{-1}({\cpx{V}}_{\geq i+1}[i+1])\in \thick_{\D{A}}(\cpx{P})$, where  $1\leq r\leq n-1$ and $ 0\leq s\leq n-2$.
According to the last triangle, we obtain a triangle
$${}_{\Lambda\opp}^\bullet(_{A}^\bullet(\cpx{P}_{n-3},\cpx{P}),\cpx{P})
\ra {}_{\Lambda\opp}^\bullet(_{A}^\bullet(\cpx{Q}_{n-1},\cpx{P}),\cpx{P})
\ra {}_{\Lambda\opp}^\bullet(_{A}^\bullet(\cpx{P}_{n-2},\cpx{P}),\cpx{P})
\ra {}_{\Lambda\opp}^\bullet(_{A}^\bullet(\cpx{P}_{n-3},\cpx{P}),\cpx{P})[1],
$$
and a commutative diagram in $\K{A}$
$$
\xymatrix{\cpx{P}_{n-3}\ar[r]^-{}\ar[d]^-{\rho_{\cpx{P}_{n-3}}}
&\cpx{Q}_{n-1}\ar[r]^-{\varphi}\ar[d]^-{\rho_{\cpx{Q}_{n-1}}}
&\cpx{P}_{n-2}\ar[r]^-{}\ar[d]^-{\rho_{\cpx{P}_{n-2}}}
&\cpx{P}_{n-3}[1]\ar[d]^-{\rho_{\cpx{P}_{n-3}[1]}}\\
{}_{\Lambda\opp}^\bullet(_{A}^\bullet(\cpx{P}_{n-3},\cpx{P}),\cpx{P})
\ar[r]^-{} 
&{}_{\Lambda\opp}^\bullet(_{A}^\bullet(\cpx{Q}_{n-1},\cpx{P}),\cpx{P})
\ar[r]^-{} 
&{}_{\Lambda\opp}^\bullet(_{A}^\bullet(\cpx{P}_{n-2},\cpx{P}),\cpx{P})
\ar[r]^-{}
&{}_{\Lambda\opp}^\bullet(_{A}^\bullet(\cpx{P}_{n-3},\cpx{P}),\cpx{P})[1].}
$$
where $\rho_{\cpx{P}_{n-2}}$ and $\rho_{\cpx{Q}_{n-1}}$ are induced from $\dotHom_A(-, \cpx{P})$.  Since $\cpx{Q}_{n-1}, \cpx{P}_{n-2}\in\add(\cpx{P})$, both $\rho_{\cpx{P}_{n-2}}$ and $\rho_{\cpx{Q}_{n-1}}$ are isomorphisms in $\K{A}$.
Thus
$\rho_{\cpx{P}_{n-3}}$ is also an isomorphism in $\K{A}$.
By iteration, we get that
$\mu:A\ra {}_{\Lambda\opp}^\bullet(_{A}^\bullet(A,\cpx{P}),\cpx{P})=\Gamma$
is a quasi-isomorphism.

$(3)$ Since $\mu:A\ra \Gamma$
is a quasi-isomorphism. This implies that  $ A\simeq H^0(\Gamma)$ and $H^n(\Gamma)=0$ for any $n\neq 0$.
Thus $\cpx{P}_\Lambda$ is self-orthogonal in $\D{\Lambda\opp}$.
Applying $\Hom^\bullet_A(-,\cpx{P})$ to the last triangle in $(\ddag)$ yields
a triangle in $\K{\Lambda\opp}$:
$$
{}_A^\bullet (\cpx{P}_{n-2},\cpx{P})\lra {}_A^\bullet (\cpx{Q}_{n-1},\cpx{P})\lra
{}_A^\bullet (\cpx{P}_{n-3},\cpx{P})\lra {}_A^\bullet (\cpx{P}_{n-2},\cpx{P})[1].
$$
Notice that ${}_A^\bullet (\cpx{X},\cpx{P})$ is compact and homotopically projective in
$\D{\Lambda^{\opp}}$ for any $\cpx{X}\in \add(\cpx{{_A}P})$, and so is
${}_A^\bullet (\cpx{P}_{n-2},\cpx{P})$.
By iteration, we get that ${}_A^\bullet (A,\cpx{P})\simeq
\cpx{P}_\Lambda$ is compact and homotopically projective in
$\D{\Lambda^{\opp}}$. Thus the dg module $\cpx{P}_\Lambda$ is partial tilting and
$A\simeq H^0(\Gamma)=\End_{\K{\Lambda\opp}}(\cpx{P})\opp\simeq\End_{\D{\Lambda\opp}}(\cpx{P})\opp$.

$(4)$ By $(\ddag)$, we have a triangle in
$\K{\Lambda\opp}$
$${}_A^\bullet(\cpx{P}_{n-2},\cpx{P})
\lra{}_A^\bullet(\cpx{Q}_{n-1},\cpx{P})
\lra {}_A^\bullet(\cpx{P}_{n-3},\cpx{P})
\lra{}_A^\bullet(\cpx{P}_{n-2},\cpx{P})[1],$$
and a triangle in $\K{\Lambda}$
$${}_A^\bullet(\cpx{P},\cpx{P}_{n-3})\lra {}_A^\bullet(\cpx{P},\cpx{Q}_{n-1})
\lra {}_A^\bullet(\cpx{P},\cpx{P}_{n-2})\lra {}_A^\bullet(\cpx{P},\cpx{P}_{n-3})[1].$$
This leads to the following commutative diagram in
$\K{\Lambda}$
$${\footnotesize
\xymatrix{{}_A^\bullet(\cpx{P},\cpx{P}_{n-3})\ar[r]^-{}\ar[d]_-{f}
&{}_A^\bullet(\cpx{P},\cpx{Q}_{n-1})\ar[r]^-{}\ar[d]_-{g} 
&{}_A^\bullet(\cpx{P},\cpx{P}_{n-2})\ar[r]^-{}\ar[d]_-{h}
&{}_A^\bullet(\cpx{P},\cpx{P}_{n-3})[1]\ar[d]_-{}\\
{}_{\Lambda\opp}^\bullet({}_A^\bullet(\cpx{P}_{n-3},\cpx{P}),\Lambda)
\ar[r]^-{}
&{}_{\Lambda\opp}^\bullet({}_A^\bullet(\cpx{Q}_{n-1},\cpx{P}),\Lambda)\ar[r]^-{}
&{}_{\Lambda\opp}^\bullet({}_A^\bullet(\cpx{P}_{n-2},\cpx{P}),\Lambda)\ar[r]^-{}
&{}_{\Lambda\opp}^\bullet({}_A^\bullet(\cpx{P}_{n-3},\cpx{P}),\Lambda)[1],}
}$$
where $f, g$ and $h$ are induced from the functor
$\dotHom_A(-, \cpx{P})$.  Since $\cpx{Q}_{n-1}, \cpx{P}_{n-2}\in\add(\cpx{P})$, both $g$ and $h$ are isomorphisms.
This implies that $f$ is an isomorphism in $\K{\Lambda}$.
By iteration, there exists a commutative diagram in $\K{\Lambda}$
$$
\xymatrix{{}_A^\bullet(\cpx{P},A)\ar[r]^-{}\ar[d]_-{\alpha}
&{}_A^\bullet(\cpx{P},\cpx{Q_1})\ar[r]^-{}\ar[d]_-{\beta} 
&{}_A^\bullet(\cpx{P},\cpx{P_0})\ar[r]^-{}\ar[d]_-{\gamma}
&{}_A^\bullet(\cpx{P},A)[1]\ar[d]\\
{}_{\Lambda\opp}^\bullet(\cpx{P},\Lambda)
\ar[r]^-{}
&{}_{\Lambda\opp}^\bullet( {}_A^\bullet(\cpx{Q_1},\cpx{P}),\Lambda)\ar[r]^-{}
&{}_{\Lambda\opp}^\bullet( {}_A^\bullet(\cpx{P_0},\cpx{P}),\Lambda)\ar[r]^-{}
&{}_{\Lambda\opp}^\bullet(\cpx{P},\Lambda)[1]}
$$
with $\beta$ and $\gamma$ isomorphisms. This implies that $\alpha$ is also an isomorphism.
In particular, $\alpha$ as a chain map between dg $\Lambda$-modules is a quasi-isomorphism. Note that ${}_A^\bullet(\cpx{P},A)$ and ${}_{\Lambda\opp}^\bullet(\cpx{P},\Lambda)$ are dg $\Lambda$-$A$-bimodules, and that $\alpha$
is a chain map between dg bimodules.  Thus $\alpha$ is a quasi-isomorphism as a chain map between dg bimodules.
This yields a natural isomorphism from $\dotHom_A(\cpx{P},A)\otimesL_A-$ to $\dotHom_\Lambda(\cpx{P},\Lambda)\otimesL_A-$. 
\overpr

\medskip

With the above preparations, we can construct the following
recollement.

\begin{Lem}\label{Recollement}
 There is a recollement of triangulated categories
$$\xymatrix{\mathscr{Y}_{\Lambda}\ar^-{i_*=i_!}[r]
&\D{\Lambda}\ar^-{j^!=j^*}[r]
\ar^-{i^!}@/^1.6pc/[l]\ar_-{i^*}@/_1.6pc/[l]
&\D{A}\ar^-{j_*}
@/^1.6pc/[l]\ar_-{j_!}@/_1.6pc/[l]}$$
where $\mathscr{Y}_{\Lambda}:=\Ker(j^*)$, $i_*$ is the canonical inclusion and
$$j_!=\dotHom_A(\cpx{P},A)\otimesL_A-, \;\; j^!={_A}\cpx{P}\otimesL_\Lambda-, \;\; j_*=\dotHom_A(\cpx{P},-).$$
\end{Lem}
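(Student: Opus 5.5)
We reduce the statement to Jørgensen's recollement for partial tilting dg modules \cite{JOR}, applied to the right dg $\Lambda$-module $\cpx{P}_\Lambda$. By Lemma~\ref{Basic properties}(3), $\cpx{P}$ is a partial tilting dg $\Lambda\opp$-module. Its dg endomorphism algebra $\Gamma=\dotEnd_{\Lambda\opp}(\cpx{P})\opp$ is quasi-isomorphic to $A$ via $\mu$, by Lemma~\ref{Basic properties}(2). Jørgensen's theorem gives a recollement with $\D{\Lambda}$ in the middle and $\D{\Gamma}$ on the right. In it, $j^!=\cpx{P}\otimesL_\Lambda-$, its right adjoint is $j_*=\rHom_\Gamma(\cpx{P},-)$, and its left adjoint is $j_!=\rHom_{\Lambda\opp}(\cpx{P},\Lambda)\otimesL_\Gamma-$. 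The left-hand term is $\Ker(\cpx{P}\otimesL_\Lambda-)$, with $i_*$ the inclusion and $i^*$, $i^!$ given by the cones of the counit and unit.

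First, we transport everything along the quasi-isomorphism $\mu:A\to\Gamma$, using property (3) of Subsection~\ref{Subsection 2.3}. The restriction $\D{\mu_*}:\D{\Gamma}\to\D{A}$ is a triangle equivalence with quasi-inverse $\Gamma\otimesL_A-$. Since $\cpx{P}$ is a dg $\Gamma$-$\Lambda$-bimodule whose $A$-$\Lambda$-structure is the restriction along $\mu$, the functor $\cpx{P}\otimesL_\Lambda-$ followed by $\D{\mu_*}$ is just ${}_A\cpx{P}\otimesL_\Lambda-$. This identifies $j^!$ with the functor in the statement, so $\mathscr Y_\Lambda=\Ker(j^*)$ is the same kernel.

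Next, we identify the right adjoint. Because ${}_A\cpx{P}$ is homotopically projective, $\dotHom_A(\cpx{P},-)$ is already the derived functor. It is right adjoint to ${}_A\cpx{P}\otimesL_\Lambda-$ by the standard tensor–Hom adjunction, already used in the proof of Lemma~\ref{Equivalence}(3). Uniqueness of adjoints gives $j_*\simeq\dotHom_A(\cpx{P},-)$.

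For the left adjoint, the transported version is $\rHom_{\Lambda\opp}(\cpx{P},\Lambda)\otimesL_A-$. Since $\cpx{P}_\Lambda$ is homotopically projective by Lemma~\ref{Basic properties}(3), this equals $\dotHom_{\Lambda\opp}(\cpx{P},\Lambda)\otimesL_A-$. Lemma~\ref{Basic properties}(4) then gives a natural isomorphism with $\dotHom_A(\cpx{P},A)\otimesL_A-$, so $j_!$ has the stated form. Alternatively, one can check directly that $\dotHom_A(\cpx{P},A)\otimesL_A-$ is left adjoint to ${}_A\cpx{P}\otimesL_\Lambda-$. Both functors preserve coproducts, so it suffices to check this on the compact generator $A$, where it reduces to the fact that $\dotHom_A(\cpx{P},A)$ is compact in $\D{\Lambda}$. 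That holds because $A\in\thick(\cpx{P})$ and Lemma~\ref{Equivalence}(3) applies.

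Finally, we check the recollement axioms after transport. Full faithfulness of $j_!$ and $j_*$ follows from $j^!j_*\simeq\mathrm{id}$, that is, from the counit $\cpx{P}\otimesL_\Lambda\dotHom_A(\cpx{P},X)\to X$ being an isomorphism. This is a coproduct-and-cone argument starting from $\cpx{P}$ and using $A\in\thick(\cpx{P})$. If $\dotHom_A(\cpx{P},-)$ fails to commute with coproducts because $\cpx{P}$ is not compact, we instead rely on Jørgensen's theorem, which already guarantees this.

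The main obstacle is the bookkeeping of bimodule structures: making sure that transport along $\mu$ converts Jørgensen's functors into exactly the functors named in the statement, naturally and not merely objectwise. This is exactly what Lemma~\ref{Basic properties}(2) and (4) were designed to supply, so we expect it to go through by citing them.
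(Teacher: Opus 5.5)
Your main line is the paper's proof: apply J{\o}rgensen's recollement to the partial tilting dg $\Lambda\opp$-module $\cpx{P}$, transport it along the quasi-isomorphism $\mu:A\to\Gamma$, and identify $j_!$ via Lemma~\ref{Basic properties}(4). The only deviation there is that you identify $j_*$ by uniqueness of the right adjoint of ${}_A\cpx{P}\otimesL_\Lambda-$, rather than by the paper's computation $\rHom_\Gamma(\cpx{P},\rHom_A(\Gamma,-))\simeq\rHom_A(\cpx{P},-)$, which is equally valid and slightly cleaner.

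Your ``direct'' alternative for $j_!$ is unnecessary and, as stated, incomplete. Compactness of $\dotHom_A(\cpx{P},A)$ in $\D{\Lambda}$ only makes $\rHom_\Lambda(\dotHom_A(\cpx{P},A),-)$ commute with coproducts. You would still need a natural map $\cpx{P}\otimesL_\Lambda-\to\rHom_\Lambda(\dotHom_A(\cpx{P},A),-)$ that is an isomorphism at $\Lambda$, i.e.\ the double duality $\cpx{P}\simeq\rHom_\Lambda(\dotHom_A(\cpx{P},A),\Lambda)$. That is exactly what Lemma~\ref{Basic properties}(4), together with the perfectness of $\cpx{P}_\Lambda$, supplies.
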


{\it Proof.}  Recall that $\cpx{P}$ is a dg $\Gamma$-$\Lambda$-bimodule. By  Lemma \ref{Basic properties}, $\cpx{P}$ is a partial tilting dg $\Lambda\opp$-module.
It follows from \cite[Proposition 3.2]{JOR} that there is a recollement
of triangulated categories
$$\xymatrix@C1.5cm{\mathscr{Y}_{\Lambda}\ar^-{i_*=i_!}[r]
&\D{\Lambda}\ar^-{{_\Gamma}\cpx{P}\otimesL_\Lambda-}[r]
\ar^-{i^!}@/^1.6pc/[l]\ar_-{i^*}@/_1.6pc/[l]
&\D{\Gamma}\ar^-{\rHom_\Gamma(\cpx{P},-)}
@/^1.6pc/[l]\ar_-{\dotHom_\Lambda(\cpx{P},\Lambda)
\otimesL_\Gamma-}@/_1.6pc/[l]}$$
where $\mathscr{Y}_\Lambda:=\Ker({_\Gamma}\cpx{P}\otimesL_\Lambda-)$ and $i_*$ is the canonical inclusion. Since
$ \mu: A\to \Gamma$ is a quasi-isomorphism, the restriction functor $\D{\mu_*}:\D{\Gamma}\ra\D{A}$ is a triangle equivalence.
Consequently, its left adjoint $\Gamma\otimesL_A-$ and also its right adjoint $\rHom_A(\Gamma, -)$ are equivalences.
Note that
$$(\dotHom_\Lambda(\cpx{P},\Lambda)\otimesL_\Gamma-)(\Gamma\otimesL_A-)
\simeq (\dotHom_\Lambda(\cpx{P},\Lambda)\otimes_\Gamma\Gamma)\otimesL_A-\simeq\dotHom_\Lambda(\cpx{P},\Lambda)\otimesL_A-\simeq \dotHom_A(\cpx{P},A)\otimesL_A-$$
where the last isomorphism follows from Lemma \ref{Basic properties} (4).
Moreover, $\D{\mu_*}({_\Gamma}\cpx{P}\otimesL_\Lambda-)={_A}\cpx{P}\otimesL_\Lambda-$ and $$
\rHom_\Gamma(\cpx{P},\rHom_A(\Gamma, -))\simeq \rHom_A(\Gamma\otimesL_\Gamma\cpx{P},-)\simeq \rHom_A(\cpx{P},-)=\dotHom_A(\cpx{P},-).$$
In the above recollement, we replace $\D{\Gamma}$ with $\D{A}$ up to equivalence, and then obtain the recollement required in Lemma \ref{Recollement}.
$\square$
\medskip

Notice that $\cpx{P}$ is a $A$-$\Delta$ dg bimodule and  $B$ is a $\Delta$-$B$ dg bimodule due to the homomorphisms $\sigma:\Delta\ra \Lambda$ and $\pi: \Delta\ra B$.
This implies that $\cpx{T}:=\cpx{P}\otimesL_{\Delta} B\in \D{A\otimes_{\mathbb{Z}} B^{op}}$.

\begin{Lem}\label{Rec equivalence}
 The functor $F$ induces an equivalence between the following two recollements
$$\xymatrix@C=3.4em@R=2.3em{\mathscr{Y}_{\Lambda}\ar[r]^-{i_*=i_!}\ar[d]_-{\simeq}^-{F}
&\D{\Lambda}\ar[r]^-{j^!=j^*}
\ar@/^1.1pc/[l]_-{i^!}
\ar_-{i^*}@/_1.2pc/[l]
\ar[d]^-{F}_-{\simeq}
&\D{A}\ar_-{j_*}@/^1.1pc/[l]\ar_-{j_!}@/_1.2pc/[l]\ar[d]_-{\simeq}^-{F j_!}\\
\mathscr{Y}_B\ar[r]^-{\bf j}&
\D{B}\ar[r]^-{\bf R}
\ar@/^1.2pc/[l]^-{\bf K}\ar_-{{\bf L}}@/_1.2pc/[l]
&\Tria_{\D{B}}(Fj_!(A))\ar@/^1.2pc/[l]^-{\bf H}\ar_-{\bf i}@/_1.2pc/[l]}
$$
\noindent where ${\bf i}$ and ${\bf j}$ are canonical inclusions,  ${\bf L}:=Fi^*F^{-1}$ is a left adjoint of ${\bf j}$ and ${\bf R}:=j^*F^{-1}\simeq \cpx{T}\otimesL_B-$ is the left adjoint of the functor ${\bf H}:=Fj_*\simeq \rHom_A(\cpx{T},-)$.
\end{Lem}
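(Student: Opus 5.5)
The plan is to transport the recollement of Lemma \ref{Recollement} along the equivalence $F:\D{\Lambda}\to\D{B}$ of Lemma \ref{inverse of F}, and then to identify the transported functors with the derived tensor and Hom functors attached to $\cpx{T}$. Since $F$ is a triangle equivalence, I will define the bottom row functors directly by transport.

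The bottom row is obtained as follows.
\begin{itemize}
\item Set $\mathscr Y_B:=F(\mathscr Y_\Lambda)$, and take $\mathbf j$ to be the inclusion.
\item Set $\mathbf R:=j^*F^{-1}$, $\mathbf H:=Fj_*$, $\mathbf L:=Fi^*F^{-1}$ and $\mathbf K:=Fi^!F^{-1}$.
\item On the right, use $Fj_!$. Since $j_!$ is fully faithful and preserves coproducts (it has a right adjoint), $Fj_!$ is a fully faithful coproduct-preserving functor.
\end{itemize}
The essential image of $Fj_!$ is the localizing subcategory generated by $Fj_!(A)$, because $A$ is a compact generator of $\D{A}$. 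So $Fj_!$ is an equivalence $\D{A}\simeq\Tria_{\D{B}}(Fj_!(A))$, and $\mathbf i$ is the inclusion. All adjunctions, full faithfulness and the canonical triangles transfer formally, so the squares commute up to natural isomorphism by construction. I would also note that $\mathscr Y_B=\Ker(\mathbf R)$, since $F$ maps $\Ker(j^*)$ onto $\Ker(j^*F^{-1})$.

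The substantive part is the identification $\mathbf R\simeq\cpx{T}\otimesL_B-$. I would compute
$$j^*F^{-1}=\cpx{P}\otimesL_\Lambda\bigl(\Lambda\otimesL_\Delta \D{\pi_*}(-)\bigr)\simeq(\cpx{P}\otimesL_\Lambda\Lambda)\otimesL_\Delta\D{\pi_*}(-)\simeq \cpx{P}\otimesL_\Delta\D{\pi_*}(-).$$
Here $\cpx{P}$ is regarded as an $A$-$\Delta$-bimodule via $\sigma$, and the associativity is property (2) of derived tensor products.

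It then remains to show $\cpx{P}\otimesL_\Delta\D{\pi_*}(-)\simeq(\cpx{P}\otimesL_\Delta B)\otimesL_B-$. I would first check this on the generator $B$: there $\D{\pi_*}(B)\simeq\Delta$ as left dg $\Delta$-modules, since $\pi$ is a quasi-isomorphism. Both functors are triangle functors that commute with coproducts, and the canonical comparison map is induced by $\Delta\to B$. A natural transformation between such functors that is an isomorphism on the compact generator $B$ and its shifts is an isomorphism on $\Tria(B)=\D{B}$. This gives $\mathbf R\simeq\cpx{T}\otimesL_B-$.

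The main obstacle is making this comparison map honestly natural. One side involves restriction along $\pi$ and the other involves $B\otimesL_\Delta$, so I would construct it explicitly. I would write $\D{\pi_*}(M)\simeq B\otimesL_B M$ with $B$ viewed as a $\Delta$-$B$-bimodule, and then apply associativity $(\cpx{P}\otimesL_\Delta B)\otimesL_B M\simeq \cpx{P}\otimesL_\Delta(B\otimesL_B M)$. This uses a homotopically projective resolution of $B$ as a bimodule, or alternatively a resolution of $\cpx{P}_\Delta$.

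Finally, $\mathbf H\simeq\rHom_A(\cpx{T},-)$ follows from uniqueness of right adjoints. $\mathbf H$ is right adjoint to $\mathbf R$ by transport, and $\rHom_A(\cpx{T},-)$ is right adjoint to $\cpx{T}\otimesL_B-$ by tensor–Hom adjunction.
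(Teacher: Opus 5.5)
Your proposal is correct and follows essentially the same route as the paper. The paper likewise identifies ${\bf R}=j^*F^{-1}$ with $\cpx{T}\otimesL_B-$ via $\cpx{P}\otimesL_\Lambda(\Lambda\otimesL_\Delta\D{\pi_*}(-))\simeq\cpx{P}\otimesL_\Delta\D{\pi_*}(-)\simeq(\cpx{P}\otimesL_\Delta B)\otimesL_B-$, gets ${\bf H}\simeq\rHom_A(\cpx{T},-)$ from uniqueness of right adjoints, and leaves implicit the steps you supply: the transport, the image of $Fj_!$ being $\Tria_{\D{B}}(Fj_!(A))$, and naturality via $\D{\pi_*}\simeq{}_\Delta B\otimesL_B-$ (strictly, the right-hand squares commute only with $Fj_!\circ{\bf R}$ and ${\bf H}\circ(Fj_!)^{-1}$, a looseness inherited from the statement).
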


{\it Proof}. Let $\cpx{M}\in \D{B}$. 
The functor $F^{-1}=\Lambda\otimesL_\Delta\D{\pi_\ast}$ (see Lemma \ref{inverse of F})
implies that 
$$\aligned{\bf R}(\cpx{M})=j^*F^{-1}(\cpx{M})
=&(\cpx{P}\otimesL_{\Lambda}-)(\Lambda\otimesL_{\Delta}\D{\pi_*}(\cpx{M}))
\simeq \cpx{P}\otimesL_{\Delta}\D{\pi_*}(\cpx{M})
\simeq \cpx{P}\otimesL_{\Delta} \cpx{M}\\
&\simeq (\cpx{P}\otimesL_{\Delta} B)\otimesL_{B}\cpx{M}
=\cpx{T}\otimesL_{B}\cpx{M}.
\endaligned
$$
Thus ${\bf R}\simeq  \cpx{T}\otimesL_B-$.
Since $({\bf R},{\bf H})$ and $(\cpx{T}\otimesL_B-,\rHom_A(\cpx{T},-))$ are adjoint pairs, 
there is a natural isomorphism ${\bf H}\simeq \rHom_A(\cpx{T},-)$.
\overpr

\smallskip

Let $\mbox{For}_A:\D{A\otimes_{k} B^{op}}\ra\D{A}$ be the functor
which forgets the derived right $B$-action.  Since $ B\simeq \Delta$ in $\D{\Delta}$,
so
$\mbox{For}_A(\cpx{T})=\mbox{For}_A(\cpx{P}\otimesL_{\Delta}B)=\cpx{P}\otimesL_{\Delta}B\simeq \cpx{P}\otimesL_{\Delta}\Delta\simeq \cpx{P}
$ in $\D{A}$.
Hence
the complex $\cpx{T}$ is not a new left $A$-object: it is just equipped with the derived right $B$-action transported from
$\pi:\Lambda\ra B$.
\medskip

Since $\Lambda\simeq \Delta$ in $\D{\Delta^{op}}$ induced by the homomorphism $\sigma$, we get that 
$\Lambda\otimesL_{\Delta}B \simeq \Delta\otimesL_{\Delta}B\simeq B$ in $\D{B^{op}}$. 
By \cite[Lemma 4.2]{keller}, it is easy to know that  
$-\otimesL_{\Delta}B:\D{\Lambda^{op}}\ra\D{B^{op}}$ is an equivalence.
This implies that $\cpx{T}_B=\cpx{P}\otimesL_{\Delta}B$ is perfect in $\D{B^{op}}$ because of the compactness of $\cpx{P}$ in $\D{\Lambda^{op}}$ by the Lemma \ref{Basic properties} (3).
This implies that there exists a complex $\cpx{V}=(V^i)$ in $\K{\pmodcat {B^{op}}}$ such that $\cpx{V}\simeq \cpx{T}_B$ in $\D{B^{op}}$, where $V^i=0$ if $i\notin [a,b]$ for some $a,b\in\IZ$. Put $m=b-a$.

The distinction between $\cpx{T}$ and $V^\bullet$ is important.  The object
$\cpx{T}\in\D{A\otimes_{\mathbb{Z}} B^{\opp}}$ retains the derived left $A$-action and defines
the $A$-valued functor $\bf R$.  The complex $V^\bullet$ records only the
underlying right $B$-object and is not asserted to carry a strict left
$A$-action.  Nevertheless, if
$\mbox{For}_{\mathbb Z}:\D{A}\to\D{\mathbb Z}$ forgets the
$A$-action, then naturally for $\cpx{X}\in\D{B}$. Moreover, there are isomorphisms
\[
 \mbox{For}_{\mathbb Z}({\bf R} (\cpx{X}))
 \simeq T_B\otimesL_B \cpx{X}
 \simeq \cpx{V}\otimesL_B \cpx{X}\simeq \cpx{V}\otimes^{\bullet}_B \cpx{X}.
\]

\begin{Lem} \label{lem:amplitude-new}
 If $i\notin [a,b]$, then
$H^i({\bf R}(M))=0$ for all $M\in B\Modcat$.
\end{Lem}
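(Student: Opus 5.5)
The plan is to compute ${\bf R}(M)$ entirely through the perfect right $B$-model $\cpx{V}$ and then read off where its cohomology can live. Since cohomology of a complex of $A$-modules is the same as cohomology of its underlying complex of abelian groups, we have $H^i({\bf R}(M))\simeq H^i(\mbox{For}_{\mathbb Z}({\bf R}(M)))$ for every $i$. By the natural isomorphisms recorded just before the statement,
$$\mbox{For}_{\mathbb Z}({\bf R}(M))\simeq \cpx{V}\otimes^{\bullet}_B M .$$
These isomorphisms rest on $\cpx{V}$ being a bounded complex of finitely generated projective right $B$-modules. Such a complex is homotopically projective, so the derived tensor product with $\cpx{V}$ is computed by the ordinary tensor complex.

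Next I will make the tensor complex explicit. A $B$-module $M$ is regarded as a stalk complex in degree $0$. The $i$-th term of $\cpx{V}\otimes^{\bullet}_B M$ is then $V^i\otimes_B M$, and its differential is $d_V^i\otimes 1_M$. Because $V^i=0$ for $i\notin[a,b]$, the term $V^i\otimes_B M$ is zero for every such $i$. The cohomology in a degree where the term itself vanishes is zero. Hence $H^i({\bf R}(M))=0$ for all $i\notin[a,b]$ and every $M\in B\Modcat$.

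The main obstacle is to make sure the isomorphism $\mbox{For}_{\mathbb Z}({\bf R}(M))\simeq \cpx{V}\otimes^{\bullet}_B M$ is legitimate. Two facts are needed.
\begin{itemize}
\item Forgetting the derived left $A$-action commutes with $-\otimesL_B M$. This follows from ${\bf R}\simeq \cpx{T}\otimesL_B-$ (Lemma~\ref{Rec equivalence}), combined with the fact that, over $\mathbb Z$, $\cpx{T}\otimesL_B M$ depends only on the underlying right $B$-object $\cpx{T}_B$.
\item The isomorphism $\cpx{T}_B\simeq\cpx{V}$ in $\D{B\opp}$ lets us replace $\cpx{T}_B$ by $\cpx{V}$ inside $-\otimesL_B M$, by functoriality of the derived tensor product in its first variable.
\end{itemize}
Both points have already been asserted in the paper, so I will simply invoke them. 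The remainder of the argument is the degree count given above.
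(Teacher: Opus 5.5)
Your proposal is correct and follows essentially the same route as the paper: both identify ${\bf R}(M)$ with the tensor complex $V^\bullet\otimes^{\bullet}_B M$, using that $V^\bullet$ is a homotopically projective model of $T^\bullet_B$, and then observe that its $i$-th term $V^i\otimes_B M$ vanishes for $i\notin[a,b]$. Your explicit passage through $\mbox{For}_{\mathbb Z}$ is slightly more careful than the paper's wording. The paper writes ${\bf R}(M)\simeq V^\bullet\otimes^{\bullet}_B M$ directly, even though the right-hand side carries no strict left $A$-action.
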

{\it Proof}.
Let $M\in B\Modcat$, regarded as a stalk complex in degree zero.
Since $\cpx{V}=(V^i)\in \K{\pmodcat {B^{op}}}$ is the homotopically projective resolution of $\cpx{T}_B$, there exist isomorphisms 
\begin{equation}\label{TV}
\cpx{V}\otimes^{\bullet}_BM
\simeq \cpx{V}\otimesL_B M\lraf{\simeq}\cpx{T}\otimesL_B M,
\end{equation}
 where $(\cpx{V}\otimes^{\bullet}_BM)^i=V^i\otimes_BM$
for all $i\in\IZ$. Thus ${\bf R}(M)\simeq \cpx{V}\otimes^{\bullet}_BM$. It follows from $V^i=0$ if $i\notin [a,b]$
that $H^i({\bf R}(M))=0$ when $i\notin [a,b]$.
\overpr

\begin{Def}\rm\cite{CX3}\label{symmetric subcat}
Let $n\in \mathbb{N}$, and let $\mathcal{A}$ be a bicomplete abelian category. An additive subcategory $\mathcal{B}$ of $\mathcal{A}$ is said to be {\it $n$-symmetric} if

(1) $\mathcal{B}$ is closed under extensions, products and coproducts.

(2) For any exact sequence $0\ra X\ra M_n\ra\cdots\ra M_1\ra M_0\ra Y\ra 0$ in $\mathcal{A}$ with all $M_i\in \mathcal{B}$, we have $X,Y\in \mathcal{B}$.
\end{Def}
Define a subcategory of $B\Modcat$ as follow:
$$
 \mathscr{E}=\mathscr{Y}_B\cap B\Modcat
     =\{M\in B\Modcat\mid{\bf R}(M)=0\}.
$$

\begin{Theo}\label{thm:sym-new}
The subcategory $\mathscr{E}$ is an $d$-symmetric subcategory of $B\Modcat$ with $d\geq m$.
\end{Theo}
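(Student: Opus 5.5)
Throughout we use ${\bf R}(M)\simeq \cpx{V}\otimes^\bullet_B M$ for $M\in B\Modcat$ (equation (\ref{TV})), where $\cpx{V}$ is a bounded complex of finitely generated projective right $B$-modules concentrated in $[a,b]$ with $m=b-a$. Hence $M\in\mathscr E$ if and only if $\cpx{V}\otimes^\bullet_B M$ is acyclic. Closure under the operations in Definition~\ref{symmetric subcat}(1) comes directly from properties of ${\bf R}=\cpx{T}\otimesL_B-$.

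\emph{Extensions.} A short exact sequence $0\to M'\to M\to M''\to0$ gives a triangle ${\bf R}(M')\to{\bf R}(M)\to{\bf R}(M'')\to$ in $\D{A}$. So ${\bf R}(M)=0$ whenever both outer terms vanish.

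\emph{Coproducts.} ${\bf R}$ is a left adjoint, hence commutes with coproducts. Coproducts of modules are coproducts in $\D{B}$.

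\emph{Products.} Each $V^i$ is finitely presented, so $V^i\otimes_B\prod M_j\simeq\prod V^i\otimes_B M_j$ naturally, and cohomology commutes with products of complexes of abelian groups.

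Additivity and closure under summands are clear.

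For condition (2), it suffices to treat $d\geq m$, since a $d'$-symmetric category is $d$-symmetric for $d\geq d'$ after padding the sequence with zero terms. Take an exact sequence $0\to X\to M_d\to\cdots\to M_0\to Y\to0$ with all $M_i\in\mathscr E$. Regard $M_\bullet=(M_d\to\cdots\to M_0)$ as a complex with $M_0$ in degree $0$. Its cohomology is $X$ in degree $-d$ and $Y$ in degree $0$. Let $C$ be the brutal truncation. Since ${\bf R}$ is a triangle functor and $\mathscr Y_B$ is triangulated, an induction over the brutal filtration shows ${\bf R}(M_\bullet)=0$. On the other hand, the soft truncations give a triangle
\[
X[d]\lra M_\bullet\lra Y\lra X[d+1]
\]
in $\D{B}$. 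Applying ${\bf R}$ yields ${\bf R}(Y)\simeq {\bf R}(X)[d+1]$.

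By Lemma~\ref{lem:amplitude-new}, $H^i({\bf R}(Y))$ is supported in $[a,b]$. Meanwhile $H^i({\bf R}(X)[d+1])=H^{i+d+1}({\bf R}(X))$ is supported in $[a-d-1,\,b-d-1]$. Since $d\geq m=b-a$, we have $b-d-1<a$, so the two intervals are disjoint. Hence both objects have zero cohomology, giving ${\bf R}(X)=0={\bf R}(Y)$, i.e. $X,Y\in\mathscr E$.

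The step needing the most care is the amplitude comparison: Lemma~\ref{lem:amplitude-new} must be applied to both $X$ and $Y$, and the indexing shift must be checked to give strict disjointness exactly when $d\geq m$. That is where the hypothesis enters. The product-compatibility step also needs the finitely presented terms of $\cpx{V}$, which is why we use the perfect model $\cpx{V}$ rather than $\cpx{T}$ itself.
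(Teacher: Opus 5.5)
Your proof is correct and follows the paper's route. Closure under extensions, products and coproducts comes from properties of ${\bf R}$; the paper gets products and coproducts at once from the TTF triple $(\Tria(Fj_!(A)),\mathscr Y_B,\Img({\bf H}))$, i.e.\ from ${\bf R}$ having adjoints on both sides, so your finite-presentation argument for products is valid but not needed. The sequence condition comes from ${\bf R}(Y)\simeq{\bf R}(X)[d+1]$ and the disjointness of the supports $[a,b]$ and $[a-d-1,b-d-1]$; your soft-truncation triangle for $M_\bullet$ just packages the paper's splicing into short exact sequences.

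One small correction to an aside you do not actually use. $d'$-symmetry implies $d$-symmetry for $d\ge d'$ by applying it to the first and to the last $d'+1$ middle terms of the long sequence, not by padding with zeros. Also, for $d=0$ the triangle $X\to M_0\to Y\to X[1]$ comes directly from the short exact sequence, not from the cohomology of $M_\bullet$.
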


{\it Proof}.
By the recollements in Lemma \ref{Rec equivalence}, $(\Tria_{\D{B}}(Fj_!(A)),\mathscr{Y}_B,\Img({\bf H}))$ is a TTF-triple in $\D{B}$. This implies that $\mathscr{Y}_B$ is closed under extensions, products and coproducts.
So is $\mathscr{E}$. 
Let
$$
 0\lra X\lra M_m\lra\cdots\lra M_0\lra Y\lra0
$$
be an exact sequence in $B\Modcat$ with  $M_i\in\mathscr{E}$ for all $0\leq i\leq m$. 
By the definition of symmetric subcategories, it is enough to show that $X,Y\in\mathscr{E}$.
Breaking the sequence into short exact
sequences and applying ${\bf R}$ yields an isomorphism
${\bf R}(Y)\simeq{\bf R}(X)[m+1]$ in $\D{A}$.
Both sides before shifting have cohomology in $[a,b]$.  If $a\leq i\leq 0$, then
$H^i({\bf R}(Y))\simeq H^{i+m+1}({\bf R}(X))=0$
due to $i+m+1>b$.  Hence ${\bf R}(Y)=0$ in $\D{A}$, that is, $Y\in\Ker({\bf R})$.  Conversely,
$H^i({\bf R}(X))\simeq H^{r-m-1}({\bf R}(Y))=0$
due to $i-m-1<a$.  Thus $X,Y\in\mathscr{E}$. The same argument works for every $d\geq m$.
\overpr

\begin{Rem}\rm
The proof uses the result $\cpx{T}$ is a perfect object in $\D{B^{op}}$.  It does not compute
$\cpx{P}\otimesL_\Lambda M$ by tensoring $P^\bullet$ with a
stalk dg module: an arbitrary dg module may require an unbounded semi-free
resolution, so that naive argument does not control amplitude.
\end{Rem}

The following terminology is standard in the model theory of modules; see
Prest \cite{PrestModelTheory} and \cite[Chapters~1 and~3]{PrestPurity}.  The terminology of definable subcategories in
the setting of locally finitely presented additive categories goes back to
Crawley--Boevey~\cite{CrawleyBoevey}.

\begin{Def}
Let $R$ be a ring.  A \emph{positive primitive formula}, denoted by
\emph{pp formula}, for left $R$-modules is a formula equivalent to
\[
              \varphi(x):\qquad \exists\; y\,(Ux+Vy=0),
\]
where $x=(x_1,\ldots,x_s)^{\mathsf T}$ is a finite tuple of free
variables, $y=(y_1,\ldots,y_t)^{\mathsf T}$ is a finite tuple of
existentially quantified variables, and $U,V$ are finite matrices over
$R$ of compatible sizes.  Thus $Ux+Vy=0$ denotes a finite homogeneous
system of $R$-linear equations.  For a left $R$-module $M$, evaluation
of the formula gives the subgroup
\[
 \varphi(M)=
 \{x\in M^s\mid
   \text{there exists }y\in M^t\text{ with }Ux+Vy=0\}
 \subseteq M^s.
\]
\end{Def}

Let $\varphi$ and $\psi$ be pp formulas with the same tuple of free
variables.  We write $\psi\leq\varphi$ if
$\psi(M)\subseteq\varphi(M)$
for every left $R$-module $M$.  In this case
$\varphi/\psi$ is called a \emph{pp-pair}.  It is evaluated on $M$ as
the quotient group
$(\varphi/\psi)(M)=\varphi(M)/\psi(M)$,
and its \emph{zero class} is
\[
       \{M\in B\Modcat\mid \varphi(M)=\psi(M)\}.
\]
A full subcategory $\mathcal D\subseteq R\Modcat$ is
\emph{definable} if it is the intersection of the zero classes of a family
of pp-pairs.  Equivalently, $\mathcal D$ is closed under arbitrary
products, filtered colimits, and pure submodules
\cite[Theorem~3.4.7]{PrestPurity}.  Recall that $N\subseteq M$ is pure if
every finite system of linear equations with constants in $N$ which has
a solution in $M$ already has a solution in $N$.

The order $\psi\leq\varphi$ already expresses the implication
$\psi(x)\Rightarrow\varphi(x)$ in every module.  Therefore, for a fixed
module $M$, the reverse inclusion, and hence the equality
$\varphi(M)=\psi(M)$, is expressed by the pp-implication
\[
             M\models
             \forall x\,(\varphi(x)\Longrightarrow\psi(x)).
\]

For example, over $R=\mathbb Z$, let
\[
 \varphi(x):(x=x),
 \qquad
 \psi(x):\exists y\,(x=2y).
\]
Then $\varphi(M)=M$, $\psi(M)=2M$, and
$(\varphi/\psi)(M)=M/2M$.
Thus the zero class of this pp-pair consists precisely of the abelian
groups satisfying $M=2M$.

\begin{Rem}\label{rem:cycles-boundaries-pp}
The symbols $\varphi$ and $\psi$ have no independent module-theoretic
meaning; they simply denote two pp formulas, with $\psi\leq\varphi$.
In the proof of Proposition~\ref{prop:definability}, however, they have a
specific homological interpretation:
\[
 \varphi_r(M)=Z^r(V^\bullet\otimes_BM),
 \qquad
 \psi_r(M)=B^r(V^\bullet\otimes_BM).
\]
Thus $\varphi_r$ is the cycle condition, $\psi_r$ is the boundary
condition, and their pp-pair computes cohomology:
\[
                  (\varphi_r/\psi_r)(M)
                  \simeq H^r(V^\bullet\otimes_BM).
\]
This is why the vanishing of the cohomology groups defining $\mathscr{E}$ is a
family of pp-conditions.
\end{Rem}

\begin{Prop}
\label{prop:definability}
The subcategory $\mathscr{E}$ is definable in $B\Modcat$.  In particular, it is
closed under filtered colimits, pure submodules, and direct summands, in
addition to the products, coproducts, and extensions of
Theorem~\ref{thm:sym-new}.
\end{Prop}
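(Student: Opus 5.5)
The plan is to make Remark~\ref{rem:cycles-boundaries-pp} precise. By (\ref{TV}) and Lemma~\ref{lem:amplitude-new}, for every $M\in B\Modcat$ we have ${\bf R}(M)\simeq V^\bullet\otimes_B M$ after forgetting the $A$-action. Forgetting the $A$-action detects vanishing in $\D{A}$, since a complex is zero exactly when its cohomology groups are zero. Hence
\[
\mathscr E=\{M\in B\Modcat\mid H^r(V^\bullet\otimes_BM)=0 \text{ for } a\le r\le b\}.
\]
It therefore suffices to show that each functor $M\mapsto H^r(V^\bullet\otimes_BM)$ is of the form $\varphi_r(M)/\psi_r(M)$ for a pp-pair $\psi_r\le\varphi_r$. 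Then $\mathscr E$ is the intersection of finitely many zero classes, and so it is definable.

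To construct the formulas, I will use that each $V^r$ is a finitely generated projective right $B$-module. First I will handle the free case. Replacing $V^\bullet$ up to homotopy by adding contractible summands $(0\to Q\xrightarrow{1}Q\to0)$, with $Q$ a complement making each term free, does not change $V^\bullet\otimes_BM$ up to homotopy, so it does not change the cohomology. So I may assume $V^r=B^{s_r}$ (as right modules, written as row vectors) and that the differential $d^r$ is left multiplication by an $s_{r+1}\times s_r$ matrix $D_r$ over $B$. Then $V^r\otimes_BM\simeq M^{s_r}$ naturally, and $d^r\otimes M$ is $x\mapsto D_rx$. Next I will define
\[
\varphi_r(x):\ D_rx=0,
\qquad
\psi_r(x):\ \exists y\,(x=D_{r-1}y),
\]
where $x$ is an $s_r$-tuple and $y$ an $s_{r-1}$-tuple. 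Both are pp formulas. The relation $D_rD_{r-1}=0$ gives $\psi_r\le\varphi_r$ in every module. By construction $\varphi_r(M)=Z^r$ and $\psi_r(M)=B^r$, naturally in $M$.

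The equivalence with closure under products, filtered colimits and pure submodules is then \cite[Theorem~3.4.7]{PrestPurity}. Coproducts follow as filtered colimits of finite sums. Direct summands are pure submodules, and they also follow directly from additivity of $H^r$. Extensions and products were already obtained in Theorem~\ref{thm:sym-new}.

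The main technical point I would double-check is the passage from $\D{A}$-vanishing to vanishing of $H^r(V^\bullet\otimes_BM)$. This needs the natural isomorphism $\mathrm{For}_{\mathbb Z}{\bf R}(M)\simeq V^\bullet\otimes_BM$. That isomorphism holds because $V^\bullet$ is a bounded complex of projectives, hence K-projective, and $\mathrm{For}_{\mathbb Z}$ commutes with cohomology. A second point to check is that the matrix description is left/right consistent with the paper's convention of writing maps on the right. If needed, I will transpose the matrices $D_r$, but the pp character of the formulas is unaffected.
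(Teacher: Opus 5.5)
Your argument is essentially the paper's proof. Both reduce membership in $\mathscr E$ to the vanishing of $H^r(V^\bullet\otimes_BM)$, write cycles and boundaries as pp formulas built from the matrices of the differentials, and then invoke Prest's closure theorem, with direct summands treated as pure submodules. The only technical difference is how you handle projective terms: the paper keeps them projective by writing $V^r\simeq e_rB^{n_r}$ and adding the conditions $e_rx=x$, whereas you make them free by adding contractible summands. Each summand $Q\xrightarrow{1}Q$ changes two adjacent degrees, so this step must be iterated; the result may be a complex infinite on one side, which is harmless since cohomology is unchanged and only degrees in $[a,b]$ matter. Also, for $D_r$ to act by left multiplication the free modules should be written as column vectors, not row vectors.
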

{\it Proof}.
Let
$ \mbox{For}_{\mathbb Z}:\D{A}\to \D{\mathbb Z}$
be the forgetful functor. It is conservative: an \textcolor[rgb]{1.00,0.00,0.00}{$R$}-complex is
acyclic if and only if its underlying complex of abelian groups is
acyclic. By the natural identification established above,
\[
   \mbox{For}_{\mathbb Z}{\bf R}(X)
      \simeq \cpx{V} \otimesL_B X
      \simeq \cpx{V}\otimes_BX
\]
for every $X\in \D{B}$, since
$\cpx{V}\in \Kb{\Pmodcat B}$ is $K$-flat.
Therefore, for a stalk module $M$,
\[
   M\in\mathscr{E} 
   \iff
   H^r(\cpx{V}\otimes_BM)=0
   \quad\text{for every }r\in\mathbb Z.
\]

We now express each of these vanishing conditions by a pp-pair.
We use column vectors. For every $r$ choose an integer $n_r$ and an
idempotent
$e_r\in M_{n_r}(B)$
such that
$ V^r\simeq e_rB^{n_r}$
as right $B$-modules. Under these identifications, the differential
$d_V^r:V^r\to V^{r+1}$ is represented by a matrix
$ D^r\in M_{n_{r+1}\times n_r}(B)$
satisfying
$D^r=e_{r+1}D^re_r$
and $ D^{r+1}D^r=0$.
For every left $B$-module $M$ there is a natural identification
$V^r\otimes_BM\simeq e_rM^{n_r}$,
under which the differential is induced by left multiplication by
$D^r$.

Define pp-formulas, with $x$ an $n_r$-tuple and $y$ an
$n_{r-1}$-tuple, by
\[
   \phi_r(x)
   :\quad e_rx=x\ \wedge\ D^rx=0,\hspace{2mm}
   \psi_r(x)
   :\quad
   \exists y\,
   \bigl(e_{r-1}y=y\ \wedge\ D^{r-1}y=x\bigr).
\]
These are pp-formulas because all the displayed conditions are finite
homogeneous systems of $B$-linear equations. Moreover,
\[
   D^rD^{r-1}=0,
   \qquad
   e_rD^{r-1}=D^{r-1},
\]
show that $\psi_r(M)\subseteq\phi_r(M)$ for every $M$; hence
$\psi_r\leq\phi_r$.

Under the identifications above,
\[
   \phi_r(M)
      =Z^r(V^\bullet\otimes_BM),
   \psi_r(M)
      =B^r(V^\bullet\otimes_BM).
\]
Consequently,
\[
   (\phi_r/\psi_r)(M)
   =
   \frac{\phi_r(M)}{\psi_r(M)}
   \simeq
   H^r(V^\bullet\otimes_BM).
\]
It follows that
\[
   \mathscr{E}
   =
   \bigcap_{r\in\mathbb Z}
   \{M\mid(\phi_r/\psi_r)(M)=0\}.
\]
Only finitely many nontrivial pp-pairs occur because $\cpx{V}$ is
bounded. Thus $\mathscr{E}$ is definable.

The standard closure theorem for definable subcategories now gives
closure under products, filtered colimits and pure submodules.
Closure under direct summands follows either from that theorem or
from the fact that every split submodule is pure.
\overpr

\subsection{Projective assembly and exact realisation}

This section identifies the triangulated kernel
$\mathscr Y_B=\Ker({\bf R})$ with the derived category of the exact
subcategory $\mathscr E$.  The argument has three stages.  First, the
adjunction unit is analysed on projective stalks.  Second, the resulting
short exact sequences are assembled functorially on bounded-above and then
unbounded complexes.  Finally, the cokernel construction is used to build a
quasi-inverse to the exact inclusion
$\D{\mathscr E}\to\mathscr Y_B$.
\smallskip

For a projective $B$-module $Q$, define
\[
                         \Psi(Q):=H^0{\bf HR}( Q).
\]

\begin{Lem}\rm
\label{lem:projective-unit-final}
Let $Q\in B\Pmodcat$. Then

(1) $H^i({\bf HR}(Q))=0$ if $i\ne0$;

(2) the homomorphism
$\eta_Q^0:=H^0(\eta_Q):Q\to \Psi(Q)$ is a monomorphism with  $C(Q):=\Coker(H^0(\eta_Q))\in \mathscr{E}$.
Moreover, $\Psi$ and $C$ are additive functors on $B\Pmodcat$, and there is an exact sequence
\begin{equation}\label{eq:projective-unit-final}
 0\longrightarrow Q\lraf{\eta_Q^0} \Psi(Q)
   \longrightarrow C(Q)\longrightarrow0.
\end{equation}
\end{Lem}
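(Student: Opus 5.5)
The plan is to compute ${\bf HR}(Q)$ through the equivalence $F$ of Lemma~\ref{inverse of F}, and then read off the unit from the recollement of Lemma~\ref{Rec equivalence}. I first reduce to free modules: every $Q\in B\Pmodcat$ is a direct summand of some $B^{(\alpha)}$. Both $H^i{\bf HR}(-)$ and $H^0(\eta_{-})$ are additive and natural, so (1), injectivity of $\eta_Q^0$, and the membership $C(Q)\in\mathscr E$ all pass to direct summands. Here I use that $\mathscr E$ is closed under direct summands (Proposition~\ref{prop:definability}), and that a summand of an exact sequence is exact.

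\emph{Step 1 (part (1)).} By Lemma~\ref{inverse of F} we have $F(\Lambda)\simeq B$, and $F$ commutes with coproducts, so $F^{-1}(B^{(\alpha)})\simeq\Lambda^{(\alpha)}$. Hence
\[
{\bf R}(B^{(\alpha)})\simeq j^*(\Lambda^{(\alpha)})\simeq \cpx{P}^{(\alpha)}
\quad\text{and}\quad
{\bf HR}(B^{(\alpha)})\simeq F\dotHom_A(\cpx{P},\cpx{P}^{(\alpha)}).
\]
Since $F$ preserves cohomology by $(\clubsuit)$ in Lemma~\ref{Equivalence}, we get
\[
H^i{\bf HR}(B^{(\alpha)})\simeq H^i\dotHom_A(\cpx{P},\cpx{P}^{(\alpha)})\simeq\Hom_{\D A}(\cpx{P},\cpx{P}^{(\alpha)}[i]).
\]
The last group is $0$ for $i\neq0$ by condition (1) of Definition~\ref{Big tilting}. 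So ${\bf HR}(Q)\simeq\Psi(Q)$ is a stalk complex in degree $0$.

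\emph{Step 2 (the unit triangle).} The lower recollement of Lemma~\ref{Rec equivalence} gives, for $X=Q$, a triangle
\[
{\bf jK}(Q)\lra Q\lraf{\eta_Q}{\bf HR}(Q)\lra{\bf jK}(Q)[1]
\]
with ${\bf K}(Q)\in\mathscr Y_B$. Taking cohomology and using Step 1, $H^i({\bf K}(Q))=0$ for $i\notin\{0,1\}$, and there is an exact sequence
\[
0\to H^0({\bf K}Q)\to Q\xrightarrow{\eta^0_Q}\Psi(Q)\to H^1({\bf K}Q)\to0.
\]
So everything reduces to showing that $\eta_Q^0$ is injective. Once that holds, $H^0({\bf K}Q)=0$, so ${\bf K}(Q)\simeq C(Q)[-1]$. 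Since $\mathscr Y_B$ is closed under shifts, $C(Q)\in\mathscr Y_B\cap B\Modcat=\mathscr E$, and the exact sequence \eqref{eq:projective-unit-final} is exactly the one displayed above.

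\emph{Step 3 (injectivity, the main point).} I identify $\eta^0_Q$ via the adjunction. The map $H^0(\eta_Q)=\Hom_{\D B}(B,\eta_Q)$ is, under $\Hom_{\D B}(B,{\bf HR}Q)\simeq\Hom_{\D A}({\bf R}B,{\bf R}Q)$, the map $f\mapsto{\bf R}(f)$. For $Q=B^{(\alpha)}$ this becomes the canonical map
\[
B^{(\alpha)}=\End_{\D A}(\cpx{P})^{(\alpha)}\lra\Hom_{\D A}(\cpx{P},\cpx{P}^{(\alpha)}).
\]
This map is injective: composing it with the projections $\cpx{P}^{(\alpha)}\to\cpx{P}$ recovers each coordinate. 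The delicate point is checking that ${\bf R}$ acts on $\Hom_{\D B}(B,B)=B$ as the identification $B=\End_{\D A}(\cpx{P})$. I would verify this using ${\bf R}=j^*F^{-1}$ and $H^0(\Lambda)=B$.

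Finally, $\Psi=H^0\circ{\bf HR}$ is an additive functor, and $\eta^0$ is a natural transformation. Hence $C=\Coker(\eta^0)$ is also an additive functor on $B\Pmodcat$.
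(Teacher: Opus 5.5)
Your proposal is correct and follows the paper's proof essentially step for step. You show that $H^i{\bf HR}(B^{(\alpha)})\simeq\Hom_{\D{A}}(\cpx{P},\cpx{P}^{(\alpha)}[i])$ vanishes for $i\neq0$ and pass to summands, you get injectivity of $H^0(\eta_{B^{(\alpha)}})$ from the canonical monomorphism $\End_{\D{A}}(\cpx{P})^{(\alpha)}\to\Hom_{\D{A}}(\cpx{P},\cpx{P}^{(\alpha)})$, and you use the recollement triangle ${\bf jK}(Q)\to Q\to{\bf HR}(Q)\to{\bf jK}(Q)[1]$ to place $C(Q)$ in $\mathscr E$. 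The differences are cosmetic: you read off ${\bf K}(Q)\simeq C(Q)[-1]$ from the long exact cohomology sequence instead of the paper's comparison of triangles via (TR3), and you apply condition (1) of Definition~\ref{Big tilting} to $\cpx{P}^{(\alpha)}$ directly, which is cleaner than the paper's detour through an embedding into a product.
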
 

{\it Proof}.
(1) Define $\mathscr{X}=\{X\in\Pmodcat B|H^i{\bf HR}(X)=0\hspace{1mm}\mbox{ if}\hspace{1mm} i\neq 0\}$.

$$\aligned H^i{\bf HR}(B)&=H^i\rHom_A(\cpx{T},\cpx{T}\otimesL_BB)\\
&\simeq H^i\rHom_A(\cpx{T},\cpx{T})\\
&\simeq \Hom_{\D{B}}(B, \rHom_A(\cpx{T},\cpx{T})[i])\\
&\simeq\Hom_{\D{A}}(\cpx{T}\otimesL_BB, \cpx{T}[i])\\
&\simeq \Hom_{\D{A}}(\cpx{T}, \cpx{T}[i])\\
&= \Hom_{\D{A}}(\cpx{P}\otimesL_{\Delta} B, \cpx{P}\otimesL_{\Delta} B[i])\\
&\simeq \Hom_{\D{A}}(\cpx{P}, \cpx{P}[i]).
\endaligned$$
Since $\cpx{P}$ is a big tilting complex in $\D{A}$, we get that $B\in\mathscr{X}$.
For any index set $\alpha$, 
$$H^i{\bf HR}(B^{(\alpha)})\simeq H^i\rHom_A(\cpx{T},{\cpx{T}}^{(\alpha)})
\hookrightarrow \prod H^i\rHom_A(\cpx{T},\cpx{T})
\simeq \prod \Hom_{\D{A}}(\cpx{P}, \cpx{P}[i]).$$
This implies that $B^{(\alpha)}\in \mathscr{X}$. Clearly, $\mathscr{X}$ is closed under direct summands.
Thus $\Pmodcat B=\mathscr{X}$.

\bigskip

(2)   If $Q=B$, then $H^0(\eta_Q)$ is equal to $\eta_Q:B=\Hom_{\D{A}}(\cpx{P},\cpx{P})\lraf{\simeq} \Psi(B)$.
This is an isomorphism of rings. Suppose that $Q=B^{(\alpha)}$ for some index set $\alpha$. There is a commutative diagram 
$$
\xymatrix{B^{(\alpha)}\ar"1,2"^(0.45){H^0(\eta_{B^{(\alpha)}})}\ar"2,1"^(0.5){\simeq}
&\Psi(B^{(\alpha)})\ar"2,2"^(0.5){\simeq}\\
\Hom_{\D{A}}(\cpx{P},\cpx{P})^{(\alpha)}\ar"2,2"^(0.5){\theta}& \Hom_{\D{A}}(\cpx{P},{\cpx{P}}^{(\alpha)} ).
}
$$
Notice that $\theta$ is always a monomorphism of abelian groups. This implies that $H^0(\eta_{B^{(\alpha)}})$ is also a monomorphism.
Let $Q_1\oplus Q_2\simeq B^{(\alpha)}$ for some index set $\alpha$. There is a commutative diagram
$$\xymatrix@R1.2cm@C1.5cm{
Q_1\oplus Q_2\ar@{>}"1,2"^(0.52){\simeq} \ar@{=}"2,1"^(0.52){}
&B^{(\alpha)}\ar@{>}"1,3"^(0.47){H^0(\eta_{B^{(\alpha)}})}
&\Psi(B^{(\alpha)})\ar@{>}"1,4"^(0.35){\simeq}
&\Psi(Q_1)\bigoplus \Psi(Q_2)
\ar@{=}"2,4"^(0.52){}\\
Q_1\oplus Q_2\ar@{>}"2,4"^(0.45){\left(
                                         \begin{array}{cc}
                                           H^0(\eta_{Q_1}) & 0 \\
                                           0 & H^0(\eta_{Q_2}) \\
                                         \end{array}
                                       \right)
}
&
&
&\Psi(Q_1)\bigoplus \Psi(Q_2).
}
$$
Since $H^0(\eta_{B^{(\alpha)}})$ is a monomorphism, so
$\left(
\begin{array}{cc}
H^0(\eta_{Q_1}) & 0 \\
 0 & H^0(\eta_{Q_2}) \\
 \end{array}
\right)$ is also a monomorphism. This implies that $H^0(\eta_{Q_1})$ and $H^0(\eta_{Q_2})$ are monomorphisms.

For each $Q\in\Pmodcat {B^{op}}$, the injection $H^0(\eta_{Q}):Q\stackrel{}{\ra}\Psi(Q)$ induces an exact sequence in $B\Modcat$
$$0\lra Q\xrightarrow{H^0(\eta_Q)}\Psi(Q)\lra C(Q)\lra 0.$$
Thus, there is a triangle $Q\stackrel{}{\ra}\Psi(Q)\ra C(Q)\ra Q[1]$ in $\D{B}$.
By Lemma \ref{Rec equivalence}, there is in $\D{B}$ another triangle $\textbf{jK}(Q)\ra Q\ra {\bf HR}(Q)\ra {\bf jK}(Q)[1]$
with $\textbf{jK}(Q)\in\mathscr{Y}_B$.

By part~(1), ${\bf HR}(Q)$ lies in the heart of the standard
$t$-structure of $\D{B}$.  Hence there is a canonical isomorphism
$\epsilon_Q:{\bf HR}(Q)\xrightarrow{\simeq}\Psi(Q)[0]$ with
$H^0(\epsilon_Q)=1_{\Psi(Q)}$.  Since $H^0$ is fully faithful on the heart,
the square
\[
\xymatrix@C=4.8em{
Q[0]\ar[r]^-{\eta_Q}\ar@{=}[d]
  &{\bf HR}(Q)\ar[d]^-{\epsilon_Q}\\
Q[0]\ar[r]_-{\eta_Q^0}
  &\Psi(Q)[0]
}
\]
commutes.  Axiom~\textup{(TR3)} now gives a morphism between the two
triangles, and the two-out-of-three property gives
$C(Q)[0]\simeq({\bf j}{\bf K})(Q)[1]$.
Therefore, $C(Q)\in\mathscr{E}$.
Finally, for a morphism $f:Q\to Q'$ of projective modules, naturality gives
\[
 H^0({\bf HR}(f))\eta_Q^0=\eta_{Q'}^0f.
\]
Thus $H^0({\bf HR}(f))$ induces a unique map
$C(f):C(Q)\to C(Q')$.  Identities, compositions, and sums are preserved
because the unit and ${\bf HR}(f)$ are natural and additive.  We therefore
obtain additive functors $\Psi,C:\Pmodcat B\to B\Modcat$, with $C$ taking
values in $\mathscr{E}$, and the sequences
\[
 0\longrightarrow Q\lraf{\eta_Q^0}\Psi(Q)
   \longrightarrow C(Q)\longrightarrow0
\]
form the asserted natural exact sequence.
\overpr

\begin{Rem}\rm
(1) For a morphism $f:Q\to Q'$ of projective modules, naturality is
expressed by the commutative square
\[
\xymatrix@C=4.2em{
Q\ar[r]^-{\eta_Q^0}\ar[d]_-f
  &\Psi(Q)\ar[d]^-{\Psi(f)}\\
Q'\ar[r]_-{\eta_{Q'}^0}
  &\Psi(Q').
}
\]
It follows that $C(f)$ is the induced morphism on cokernels and that
$C:\Pmodcat B\to B\Modcat$ is additive.

For a complex
$Q^\bullet$ of projective modules, write
\[
 \Psi(Q^\bullet)^i=\Psi(Q^i),\qquad
 C(Q^\bullet)^i=C(Q^i),
\]
with differentials induced by functoriality.  Thus $\Psi$ and $C$ denote termwise functors; they must not be confused with the
ordinary cohomology module $H^0({\bf HR}(Q^\bullet))$.
Accordingly, $\Psi$ and $C$ extend termwise to complexes of projective modules.

(2) By Lemma \ref{lem:projective-unit-final}(1),
$\Psi(Q)[0]=H^0{\bf HR}(Q)\simeq{\bf HR}(Q[0])$ in $\D{B}$ for every $Q\in\Pmodcat B$.

\end{Rem}

To extend the isomorphism $H^0{\bf HR}(-)\simeq{\bf HR}(-[0])$ on projective modules  to homotopically projective complexes, we must appeal to the derived $\infty$-category of dg algebras.

Let $\mathbb{A}$ be a dg algebra.  Denote by
$\mathsf{hProj}_{\dg}(\mathbb{A})$ the dg category of homotopically projective dg
$\mathbb{A}$-modules.  For $P,Q\in\mathsf{hProj}_{\dg}(\mathbb{A})$, its morphism
complex is the internal Hom complex
$ \underline{\Hom}_\mathbb{A}(P,Q)$.
Recall that a dg $\mathbb{A}$-module $P$ is homotopically projective if $\underline{\Hom}_\mathbb{A}(P,N)$
is acyclic for every acyclic dg $\mathbb{A}$-module $N$.

\begin{Def}\rm\cite{LurieHA}.\label{Dg nerve}
The \emph{derived $\infty$-category} of $\mathbb{A}$ is defined by
\[
 \Di_\infty(\mathbb{A})
 :=
 \mathrm N_{\dg}\bigl(\mathsf{hProj}_{\dg}(\mathbb{A})\bigr),
\]
where \(\mathrm N_{\dg}\) denotes the dg nerve.
\end{Def}

The objects of \(\Di_\infty(\mathbb{A})\) are the  homotopically projective dg
$\mathbb{A}$-modules.  For \(P,Q\in\mathsf{hProj}_{\dg}(\mathbb{A})\), the mapping space
is naturally equivalent to
\[
 \operatorname{Map}_{\Di_\infty(\mathbb{A})}(P,Q)
 \simeq
 \operatorname{DK}\!\left(
   \tau_{\leq 0}\underline{\Hom}_\mathbb{A}(P,Q)
 \right),
\]
where the non-positively graded cochain complex on the right is first
reindexed as a non-negatively graded chain complex, and
\(\operatorname{DK}\) denotes the Dold--Kan construction.  Composition
is induced by composition in the dg category
\(\mathsf{hProj}_{\dg}(\mathbb{A})\).
Since \(\mathsf{hProj}_{\dg}(\mathbb{A})\) is strongly pretriangulated, its dg
nerve is a stable \(\infty\)-category.  Moreover, its homotopy category
is naturally equivalent to the derived category of $\mathbb{A}$:
\[
 \mathrm h\Di_\infty(\mathbb{A})
 \simeq
 H^0\bigl(\mathsf{hProj}_{\dg}(\mathbb{A})\bigr)
 \simeq
 \D{\mathbb{A}}.
\]
Thus \(\Di_\infty(\mathbb{A})\) is a stable \(\infty\)-categorical enhancement of
\(\D{\mathbb{A}}\).  Here ``derived \(\infty\)-category'' should not be confused
with the stable derived category occurring in singularity theory.

\begin{Lem}\label{lem:bounded-assembly-final}
For every bounded-above complex
$Q^\bullet\in\Kf{\Pmodcat B}$, there is a natural isomorphism
\[
 \theta_{Q}:\Psi(Q^\bullet)\lraf{\simeq}{\bf HR}(Q^\bullet)
\]
in $\D{B}$ such that the triangle
\[
\xymatrix@C=4.5em{
Q^\bullet\ar[r]^-{u_{Q}}\ar[dr]_-{\eta_{Q}}
  &\Psi(Q^\bullet)\ar[d]^-{\theta_{Q}}\\
  &{\bf HR}(Q^\bullet)
}
\]
commutes, where $u_{Q}=(\eta_{Q^i}^0)_{i\in\mathbb Z}$ is the termwise
projective-unit map and $\eta_{Q}$ is the derived adjunction unit.
\end{Lem}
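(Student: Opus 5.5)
Because $\theta_Q$ must be compatible with the unit, I will not just compare cohomology. I will construct $\theta_Q$ as the composite of two natural maps. The key input is Lemma~\ref{lem:projective-unit-final}: on every projective stalk, ${\bf HR}(Q)\simeq\Psi(Q)[0]$, and this isomorphism is compatible with $\eta_Q$ via $\eta_Q^0$.

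\textbf{Step 1: a strict model.} I work in the dg enhancement $\Di_\infty(B)$ of Definition~\ref{Dg nerve} and choose a dg functor model $\widetilde{\bf HR}$ of ${\bf HR}$ on homotopically projective complexes, for instance $\dotHom_A(\cpx{P},\cpx{P}\otimes_\Lambda(\Lambda\otimes_\Delta -))$ transported along $F$. I also fix a strict unit $Q^\bullet\to\widetilde{\bf HR}(Q^\bullet)$.

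\textbf{Step 2: truncation on stalks.} For a projective stalk $Q$, the complex $\widetilde{\bf HR}(Q)$ has cohomology only in degree $0$. Hence the zig-zag
\[
\widetilde{\bf HR}(Q)\longleftarrow\tau_{\leq0}\widetilde{\bf HR}(Q)\longrightarrow H^0\widetilde{\bf HR}(Q)=\Psi(Q)
\]
consists of quasi-isomorphisms. Both maps are natural and strictly functorial in $Q$, because truncation is a functor on complexes.

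\textbf{Step 3: the bounded case.} Apply this zig-zag termwise to $Q^\bullet$ and totalize the resulting double complexes. The natural candidate is the bicomplex $\widetilde{\bf HR}(Q^i)$, totalized in the direction of $i$. This gives natural maps
\[
\mathrm{Tot}\,\widetilde{\bf HR}(Q^\bullet)\longleftarrow\mathrm{Tot}\,\tau_{\leq0}\widetilde{\bf HR}(Q^\bullet)\longrightarrow\Psi(Q^\bullet).
\]
There is also a natural comparison $\mathrm{Tot}\,\widetilde{\bf HR}(Q^\bullet)\to\widetilde{\bf HR}(Q^\bullet)$, coming from the fact that the dg functor commutes with finite totalizations, i.e.\ iterated cones. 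For bounded $Q^\bullet$, I prove all three maps are isomorphisms by induction on the length of the brutal filtration $Q^\bullet_{\geq i}$. Each step is a triangle $Q^{i}[-i]\to Q^\bullet_{\geq i}\to Q^\bullet_{\geq i+1}\to$, which both sides respect, so the five lemma applies; the base case is Step~2. Compatibility with the unit holds stalkwise by the square in the proof of Lemma~\ref{lem:projective-unit-final}, so it holds strictly on the totalization; this gives $\theta_Q u_Q=\eta_Q$.

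\textbf{Step 4: bounded above — the main obstacle.} I expect this to be the hardest step, because ${\bf HR}$ need not commute with the relevant infinite totalization. I will use that ${\bf HR}$ has finite cohomological amplitude. ${\bf R}$ has amplitude in $[a,b]$ by Lemma~\ref{lem:amplitude-new}. ${\bf H}=\rHom_A(\cpx{T},-)$ has bounded amplitude because $\cpx{P}$ is $n$-term. So ${\bf HR}$ is way-out in both directions with bounded shift, and the same holds for $\Psi$.

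Write $Q^\bullet$ as the homotopy colimit (telescope) of its brutal truncations $\sigma_{\geq -k}Q^\bullet$. Fix a degree $j$. For $k\gg |j|$, the map $H^j(\sigma_{\geq-k}Q^\bullet)\to H^j(Q^\bullet)$ is an isomorphism, and by the amplitude bound the corresponding cohomology of ${\bf HR}$ and of $\Psi$ stabilizes as well. Both $\Psi$ and ${\bf HR}$ commute with the telescope on cohomology. For ${\bf HR}$ this holds because ${\bf R}$ commutes with coproducts, and ${\bf H}$ preserves coproducts on $\Img$ via the TTF-triple in the proof of Theorem~\ref{thm:sym-new}; if needed, the stabilization argument alone suffices. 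For $\Psi$ it holds termwise.

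The natural isomorphisms of Step~3 are compatible with the transition maps, since everything is strictly functorial. They therefore induce a map of telescopes, and this map is an isomorphism in every degree. Naturality and the unit triangle pass to the colimit, which finishes the proof.
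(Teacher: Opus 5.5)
\paragraph{Comparison with the paper's proof.}
Your proposal is correct in outline. Its Step~4 is essentially the paper's Step~4: a telescope of brutal truncations, together with the upper way-out bound ${\bf HR}(\Dle{s}{B})\subseteq\Dle{s+b-u}{B}$, which makes each $H^r$ stabilise.

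You genuinely differ in the bounded case. The paper never strictifies. It takes the stalkwise heart equivalence $\epsilon_Q:{\bf HR}(Q[0])\simeq\Psi(Q)[0]$ in $\Di_\infty(B)$. It then extends $\epsilon^{-1}$ and the identity $\epsilon_Q\eta_Q=\eta^0_Q$ to finite twisted complexes via $\operatorname{Fun}^{\mathrm{ex}}(\Kbb{b}{\mathcal P},\mathcal C)\simeq\operatorname{Fun}^{\mathrm{add}}(\mathcal P,\mathcal C)$. Invertibility is checked with the finite brutal-filtration spectral sequence.

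You instead use a strict dg model, the termwise zig-zag $\widetilde{\bf HR}(Q^i)\leftarrow\tau_{\leq 0}\widetilde{\bf HR}(Q^i)\to\Psi(Q^i)$, and totalisation. Your observation that the strict unit of a degree-zero stalk lands in $Z^0$, hence in $\tau_{\leq0}$, makes $\theta_Q u_Q=\eta_Q$ hold on the nose. Because every map is an honest chain map, your Step~4 telescopes are genuinely functorial. The paper needs this too, since a map of homotopy colimits in a bare triangulated category is not unique, but it only asserts it via the enhancement.

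\paragraph{The cost of strictification.}
The price of your route is the strictification itself, which the paper deliberately avoids; it is not a mere choice. The formula you suggest does not work as written. First, $\Lambda\otimes_\Delta-$ applied to a $B$-module is underived. Second, conjugating by $F$ turns the unit into a zig-zag through $FF^{-1}\simeq\mathrm{id}$.

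What your argument actually needs is a dg functor $\widetilde{\bf HR}$ on complexes of projective $B$-modules, together with a dg-natural unit. Naturality with respect to non-closed maps is required so that the unit is compatible with the iso $\mathrm{Tot}\,\widetilde{\bf HR}(Q^i)\cong\widetilde{\bf HR}(Q^\bullet)$; a dg-natural zig-zag would also suffice. One example is $\dotHom_A(X,X\otimes_B-)$ for a strict bimodule model $X$ of $\cpx{T}$ that is h-projective over $A$. Over a base field this is routine; over $\mathbb Z$ with non-flat $B$ it needs real care.

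\paragraph{Smaller points.}
\begin{enumerate}
\item Your justification that ${\bf H}$ preserves coproducts is false in exactly the case of interest. If it held, $\Hom_{\D{A}}({\bf R}(B),-)\simeq\Hom_{\D{B}}(B,{\bf H}(-))$ would commute with coproducts, so $\cpx{T}\simeq\cpx{P}$ would be compact. Drop it and use the stabilisation argument, as the paper does.
\item The brutal truncation triangle is $Q^\bullet_{\geq i+1}\to Q^\bullet_{\geq i}\to Q^i[-i]\to Q^\bullet_{\geq i+1}[1]$; the part in degrees $\geq i+1$ is the subcomplex.
\item For a dg functor, $\mathrm{Tot}\,\widetilde{\bf HR}(Q^i)\to\widetilde{\bf HR}(Q^\bullet)$ is already an isomorphism of complexes, so no induction is needed there.
\end{enumerate}
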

{\it Proof}.
We first construct the comparison on bounded complexes in a stable
$\infty$-categorical enhancement of the derived category and then pass to
bounded-above complexes by a functorial
telescope.  This avoids choosing a simultaneous strictification of the
tensor--Hom adjunction and its unit.
The enhancement is not needed for the stalkwise equivalence itself; its role
is to extend that equivalence, together with its compatibility with the
adjunction unit, coherently to finite twisted complexes and subsequently to
functorial mapping telescopes.

\emph{Step 1: the comparison on projective stalks.}
The complex $\cpx{T}:=\cpx{P}\otimesL_{\Delta} B\in \D{A\otimes_{\mathbb{Z}} B^{op}}$ gives an adjunction of stable
$\infty$-categories
$$\xymatrix{
\cpx{T}\otimesL_B-:\Di_\infty(B)\ar@<.5ex>[r]^{}& \Di_\infty(A):\rHom_A(\cpx{T},-)\ar@<.5ex>[l]^{}
}$$
with its canonical unit.  Passing to homotopy categories gives precisely
adjoint pair
$({\bf R},{\bf H})$ and $\eta$.  This is the usual enhanced
tensor-Hom adjunction for dg modules; see \cite[\S6]{keller}.  No assertion
that a two-sided cofibrant representative is \textcolor[rgb]{1.00,0.00,0.00}{$K$-flat} after either
one-sided restriction is used.

We use the cohomological convention for the standard $t$-structure on
$\Di_\infty(B)$.  Its two halves are the full subcategories
\[
 \Di_\infty(B)^{\leq 0}
 :=
 \left\{
 X\in\Di_\infty(B)
 \ \middle|\
 H^i(X)=0\text{ for every }i>0
 \right\}
\]
and
\[
 \Di_\infty(B)^{\geq 0}
 :=
 \left\{
 X\in\Di_\infty(B)
 \ \middle|\
 H^i(X)=0\text{ for every }i<0
 \right\}.
\]
%
%
The heart of the standard $t$-structure is
$\Di_\infty(B)^\heartsuit :=\Di_\infty(B)^{\leq0}\cap\Di_\infty(B)^{\geq0}.
$
The cohomology functor induces a canonical equivalence
$H^0:\Di_\infty(B)^\heartsuit\lraf{\simeq}B\Modcat$,
whose quasi-inverse sends a $B$-module $M$ to the stalk object $M[0]$.

For a projective $B$-module $Q$, put
$ X_Q:={\bf HR}(Q[0])$.
Lemma~\ref{lem:projective-unit-final} (1)
gives
\[
 H^r(X_Q)=0\qquad(r\ne0),
 \qquad H^0(X_Q)=\Psi(Q).
\]
Hence
\[
 X_Q\in\Di_\infty(B)^{\leq0}\cap\Di_\infty(B)^{\geq0}
       =\Di_\infty(B)^\heartsuit.
\]
The heart of the standard $t$-structure is canonically equivalent to
$B\Modcat$ via $H^0$, with quasi-inverse $M\mapsto M[0]$.  Applying the
unit of this equivalence to $X_Q$ gives a canonical equivalence
\begin{equation}\label{eq:stalk-enhanced-comparison}
 \epsilon_Q:{\bf HR}(Q[0])=X_Q
       \lraf{\simeq}H^0(X_Q)[0]=\Psi(Q)[0].
\end{equation}

For completeness, this comparison can be described directly by truncation.
For every $X\in\Di_\infty(B)$ there is a natural diagram
\[
 X\xleftarrow{\ \rho_X\ }\tau_{\leq0}X
   \xrightarrow{\ \lambda_X\ }
   \tau_{\geq0}\tau_{\leq0}X.
\]
If $H^r(X)=0$ for $r\ne0$, both arrows are equivalences and the object on
the right is canonically $H^0(X)[0]$.  Thus
$\epsilon_Q=\lambda_{X_Q}\rho_{X_Q}^{-1}$.
The truncation morphisms are natural, and a natural equivalence has an
inverse through a contractible space of choices.  Consequently
$\epsilon_Q$ is natural in $Q$; explicitly, for every homomorphism
$f:Q\to Q'$ of projective modules, the square
\[
\xymatrix@C=4.8em{
{\bf HR}(Q[0])\ar[r]^-{\epsilon_Q}\ar[d]_-{{\bf HR}(f)}
  &\Psi(Q)[0]\ar[d]^-{\Psi(f)[0]}\\
{\bf HR}(Q'[0])\ar[r]_-{\epsilon_{Q'}}
  &\Psi(Q')[0]
}
\]
commutes.

We next check compatibility with the adjunction unit.  Both $Q[0]$ and
$\Psi(Q)[0]$ belong to the heart, and $H^0$ is fully faithful on the heart.
Moreover, the construction above gives $H^0(\epsilon_Q)=1_{\Psi(Q)}$.
Therefore
\[
 H^0(\epsilon_Q\eta_Q)
 =H^0(\epsilon_Q)H^0(\eta_Q)
 =\eta_Q^0.
\]
It follows that
\begin{equation}\label{eq:stalk-enhanced-unit}
 \epsilon_Q\eta_Q=\eta_Q^0:Q[0]\longrightarrow \Psi(Q)[0].
\end{equation}
This proves the required stalkwise comparison together with its naturality
and its compatibility with the unit.

\emph{Step 2: coherent extension to bounded complexes.}
Working in a fixed larger universe, let
$  \mathcal P=\Pmodcat{B}$
be regarded as a dg category concentrated in degree zero, and put
$\Kbb{b}{\mathcal P}:=\mathrm N_{\mathrm{dg}}(\operatorname{pretr}(\mathcal P))$.
Here $\operatorname{pretr}(\mathcal P)$ is the dg category of finite
twisted complexes over $\mathcal P$.  Its homotopy category is
$\mathrm h\Kbb{b}{\mathcal P}\simeq \Kb{\Pmodcat{B}}$.
Since every bounded complex of projective $B$-modules is $h$-projective,
totalisation gives a canonical exact functor
$\iota:\Kbb{b}{\mathcal P}\longrightarrow\Di_\infty(B)$.

Consider the additive enhanced functors
\[
 \begin{aligned}
 j_0:\mathcal P&\longrightarrow\Di_\infty(B),
       &Q&\longmapsto \Psi(Q)[0],\\
 t_0:\mathcal P&\longrightarrow\Di_\infty(B),
       &Q&\longmapsto{\bf HR}(Q[0]).
 \end{aligned}
\]
Step~1 gives an enhanced natural equivalence
$\epsilon:t_0\lraf{\simeq}j_0$.

We use the universal property of the pretriangulated hull, or equivalently
of the finite stable envelope; see
\cite{KellerDGCategories} and \cite[\S1.1.3]{LurieHA}.  For every stable
$\infty$-category $\mathcal C$, restriction along
$\mathcal P\to\Kbb{b}{\mathcal P}$ induces an equivalence
\[
 \operatorname{Fun}^{\mathrm{ex}}
(\Kbb{b}{\mathcal P},\mathcal C)
 \lraf{\simeq}
 \operatorname{Fun}^{\mathrm{add}}(\mathcal P,\mathcal C).
\]
This is an equivalence of $\infty$-categories, so it controls natural
transformations and all their coherences as well as objects.

The functor $j_0$ therefore has an exact extension
\[
 J_\infty:\Kbb{b}{\mathcal P}\longrightarrow\Di_\infty(B).
\]
For a  bounded complex $\cpx{M}=(M^i,d_M^i)$, the object
$J_\infty(\cpx{M})$ is represented by the termwise complex
\[
                    \Psi(\cpx{M})=(\Psi(M^i),\Psi(d_M^i)).
\]
On the other hand, ${\bf HR}\iota$ is an exact extension of $t_0$.
Consequently $\epsilon^{-1}:j_0\xrightarrow{\sim}t_0$ extends, through a
contractible space of choices, to a natural transformation
$\theta:J_\infty\longrightarrow{\bf HR}\iota$.
Passing to homotopy categories gives natural morphisms
\begin{equation}\label{eq:bounded-enhanced-comparison}
 \theta_R:\Psi(\cpx{M})\longrightarrow{\bf HR}(\cpx{M}),
 \qquad \cpx{M}\in\Kb{\Pmodcat{B}}.
\end{equation}

Finally, the maps $\eta_Q^0:Q[0]\to \Psi(Q)[0]$ form a natural
transformation on $\mathcal P$.  Its exact extension is represented on a
bounded complex by the termwise chain map
\[
                 \eta_M^0:\cpx{M}\longrightarrow \Psi(\cpx{M}).
\]
The enhanced adjunction unit restricts to
$\eta_M:\cpx{M}\to{\bf HR}(\cpx{M})$.  By
\eqref{eq:stalk-enhanced-unit}, on every projective stalk one has
\[
                    \epsilon_Q^{-1}\eta_Q^0=\eta_Q.
\]
Because restriction to $\mathcal P$ is fully faithful also for natural
transformations, this identity extends coherently to all finite twisted
complexes.  Hence
\begin{equation}\label{eq:bounded-enhanced-unit}
                 \theta_M\eta_M^0=\eta_M
                 \qquad\text{in }\D{B}.
\end{equation}

\emph{Step 3: the finite-filtration spectral sequence.}
We now verify directly that \eqref{eq:bounded-enhanced-comparison} is an
isomorphism.  Let $\cpx{M}$ be bounded and filter it by the finite brutal
filtration
\[
                       F^p\cpx{M}=\cpx{M}_{\geq p}.
\]
Its $p$-th graded factor is the stalk complex
\[
                       \operatorname{gr}^p_F\cpx{M}
                       \simeq M^p[-p].
\]
Because $\Psi$ is exact, the induced finite filtration of
$\Psi(\cpx{M})$ gives a strongly convergent cohomological spectral
sequence
\begin{equation}\label{eq:T-finite-filtration-ss}
 \begin{split}
 E_1^{p,q}(\Psi(\cpx{M}))
  &=H^{p+q}\Psi(M^p[-p])\\
  &\textcolor[rgb]{1.00,0.00,0.00}{\simeq} H^q{\bf HR}(M^p[0])\\
  &\textcolor[rgb]{1.00,0.00,0.00}{\simeq}
   \begin{cases}
      \Psi(M^p),&q=0.,\\
      0,&q\ne0,
   \end{cases}
 \end{split}
 \qquad
 E_1^{p,q}\Longrightarrow H^{p+q}{\bf HR}(\cpx{M}).
\end{equation}
The last identification is Lemma~\ref{lem:projective-unit-final}(1),
normalized by $\epsilon_{R^p}$.  With the usual cohomological sign
convention, the differential on the only nonzero row is
$d_1^{p,0}=\Psi(d_M^p)$.
Hence
\begin{equation}\label{eq:T-finite-filtration-E2}
 E_2^{p,0}({\bf HR}(\cpx{M}))
       \cong H^p(\Psi(\cpx{M})),
 \qquad
 E_2^{p,q}=0\quad(q\ne0),
\end{equation}
and the spectral sequence collapses at $E_2$.

The termwise complex $\Psi(\cpx{M})$ has the corresponding finite
filtration, with the same $E_1$-page.  The morphism $\theta_M$ is filtered:
on the $p$-th graded factor it is
\[
 \epsilon_{M^p}^{-1}:
 \Psi(M^p)[-p]\longrightarrow{\bf HR}(M^p[-p]).
\]
After the identifications in \eqref{eq:T-finite-filtration-ss}, the induced
map of $E_1$-pages is the identity on every $\Psi(M^p)$.  The comparison
theorem for finite spectral sequences therefore gives
\[
                    H^n(\theta_M):
                    H^n(\Psi(\cpx{M}))
                    \lraf{\simeq}
                    H^n{\bf HR}(\cpx{M})
                    \qquad(n\in\mathbb Z).
\]
Thus $\theta_M$ is an isomorphism in $\D{B}$.  Notice that the spectral
sequence proves invertibility, whereas the enhancement in Steps~1--2
constructs the natural morphism and proves the unit identity
\eqref{eq:bounded-enhanced-unit}.

\emph{Step 4: passage to bounded-above complexes.}
Let the perfect right $B$-model $\cpx{V}$ of $\cpx{T}$ be supported in
$[a,b]$, as above, and let the bounded projective left $A$-model
$\cpx{P}$ be supported in $[u,v]$.  Then
\[
{\bf R}\bigl(\Dle{s}{B}\bigr) \subseteq \Dle{s+b}{A},
\qquad
{\bf H}\bigl(\Dle{t}{A}\bigr) \subseteq \Dle{t-u}{B}.
\]

The first inclusion follows by totalising $V^\bullet\otimes_B-$; the
second follows by totalising $\Hom_A^\bullet(P^\bullet,-)$.  Thus one may
take $c_+=b-u$ in
\begin{equation}\label{eq:upper-way-out-detail}
 \Psi\bigl(\Dle{s}{B}\bigr)
       \subseteq\Dle{s+c_+}{B}
 \qquad(s\in\mathbb Z).
\end{equation}
On the other hand, termwise application of $\Psi$ does not change degrees, so
\begin{equation}\label{eq:J-upper-bound-detail}
 \overline{\Psi}(\Kle{s}{\Pmodcat{B}})
       \subseteq\Kle{s}{B\Modcat}.
\end{equation}
Let $Q^\bullet$ vanish in degrees greater than $N$.  For $m\le N$, set
$Q_{\ge m}^\bullet$ and $ Q_{\leq m-1}^\bullet$
be brutal truncations.  The first complex is
bounded, the second is supported in degrees at most $m-1$, and the
degreewise split exact sequence
\[
 0\longrightarrow Q_{\ge m}^\bullet\longrightarrow Q^\bullet
   \longrightarrow Q_{\leq m-1}^\bullet\longrightarrow0
\]
gives a truncation triangle.

We next construct, rather than merely postulate, the comparison on
$Q^\bullet$.  As $m$ decreases, the canonical inclusions
$Q_{\ge m}^\bullet\ra Q_{\ge m-1}^\bullet$
form a direct system whose homotopy colimit is $Q^\bullet$.  After applying
$\Psi$ termwise, the system remains degreewise eventually constant, and hence
\begin{equation}\label{eq:J-lower-hocolim-detail}
 \hocolim_{m\to-\infty}\Psi(\cpx{Q_{\ge m}})
             \lraf{\simeq}\Psi(\cpx{Q}).
\end{equation}
For every $m$, Steps~2--3 give a natural isomorphism
$\theta_m:\Psi(\cpx{Q_{\ge m}})
              \lraf{\simeq}{\bf HR}(\cpx{Q_{\ge m}})$.
Naturality for the inclusions
$Q_{\ge m}^\bullet\to Q_{\ge m-1}^\bullet$ makes the $\theta_m$ an
isomorphism of direct systems.  Taking functorial mapping telescopes and
then using $Q_{\ge m}^\bullet\to Q^\bullet$ gives
\begin{equation}\label{eq:bounded-above-enhanced-map}
 \hocolim_{m\to-\infty}\Psi(\cpx{Q_{\ge m}})
 \lraf{\simeq}
 \hocolim_{m\to-\infty}{\bf HR}(\cpx{Q_{\ge m}})
 \longrightarrow{\bf HR}(\cpx{Q}).
\end{equation}
Using \eqref{eq:J-lower-hocolim-detail},
\eqref{eq:bounded-above-enhanced-map} defines a natural morphism
$\theta_Q:\Psi(\cpx{Q})\longrightarrow{\bf HR}(\cpx{Q})$
in $\D{B}$.  It is natural in $\cpx{Q}$ because brutal truncation, all
maps in \eqref{eq:bounded-above-enhanced-map}, and the mapping telescope are
functorial.

We now verify that it is an isomorphism.  Fix $r$ and choose $m$ so small
that
\[
                       m-1+c_+<r-1
              \quad\text{and}\quad m-1<r-1.
\]
Equations \eqref{eq:upper-way-out-detail} and
\eqref{eq:J-upper-bound-detail} imply
\[
 \begin{split}
 H^{r-1}(\Psi(\cpx{Q_{<m}}))=H^r(\Psi(\cpx{Q_{<m}}))&=0,\\
 H^{r-1}({\bf HR}(\cpx{Q_{<m}}))
   =H^r({\bf HR}(\cpx{Q_{<m}}))&=0.
 \end{split}
\]
The morphism $\theta_Q$ fits into a morphism of truncation triangles
\[
\xymatrix@C=3.8em{
\Psi(Q_{\ge m}^\bullet)\ar[r]\ar[d]_-{\theta_{Q_{\ge m}}}
  &\Psi(Q^\bullet)\ar[r]\ar[d]_-{\theta_Q}
  &\Psi(Q_{<m}^\bullet)\ar[r]\ar[d]
  &\Psi(Q_{\ge m}^\bullet)[1]\ar[d]_-{\theta_{Q_{\ge m}}[1]}\\
{\bf HR}(Q_{\ge m}^\bullet)\ar[r]
  &{\bf HR}(Q^\bullet)\ar[r]
  &{\bf HR}(Q_{<m}^\bullet)\ar[r]
  &{\bf HR}(Q_{\ge m}^\bullet)[1].
}
\]
The vanishing above therefore identifies the degree-$r$ cohomology of
$\Psi(Q^\bullet)$ and ${\bf HR}(Q^\bullet)$ with that of their bounded
truncations $Q_{\ge m}^\bullet$.  Under these identifications
$H^r(\theta_Q)$ is $H^r(\theta_{Q_{\ge m}})$, hence is an isomorphism by
Step~2.  Since this holds for every $r$, $\theta_Q$ is an isomorphism in
$\D{B}$.

Finally, \eqref{eq:bounded-enhanced-unit} gives a commutative diagram of
the direct systems formed by $Q_{\ge m}^\bullet$,
$\Psi(\cpx{Q_{\ge m}})$, and ${\bf HR}(\cpx{Q_{\ge m}})$.  Passing to
the functorial mapping telescopes and using
$\hocolim_{m\to-\infty}\cpx{Q_{\ge m}} \lraf{\simeq}\cpx{Q}$
shows, by naturality of the enhanced adjunction unit and the last arrow of
\eqref{eq:bounded-above-enhanced-map}, that
$
 \cpx{Q}\xrightarrow{u_Q}\Psi(\cpx{Q})
       \xrightarrow{\theta_Q}{\bf HR}(\cpx{Q})
$
is precisely $\eta_Q$.  This proves both the asserted natural isomorphism
and its compatibility with the unit.
\overpr
\medskip

The lower way-out estimate will be needed for unbounded complexes.  Since the
perfect right $B$-model of $\cpx{T}$ is supported in $[a,b]$, and the bounded
projective left $A$-model of $\cpx{P}$ is supported in some interval
$[u,v]$, the same total-complex calculation gives
\begin{equation}\label{eq:lower-way-out-final}
 {\bf HR}(\Dge{s}{B})\subseteq\Dge{s+e}{B}
 \qquad(s\in\mathbb Z),\qquad e=a-v.
\end{equation}

\begin{Lem}\label{lem:unbounded-assembly-final}
Let $Q^\bullet$ be a homotopically projective complex in $\K{B}$.  There is a natural isomorphism
\[
 \alpha_Q:\Psi(Q^\bullet)\lraf{\simeq}{\bf HR}(Q^\bullet)
\]
in $\D{B}$ such that $\alpha_Q u_Q=\eta_Q$, where
$u_Q=(\eta_{Q^i}^0)_{i\in\mathbb Z}$.  Consequently, there is a natural
triangle
\begin{equation}\label{eq:assembly-triangle-final}
 C(Q^\bullet)[-1]\lraf{\partial_Q}Q^\bullet
 \lraf{\eta_Q}{\bf HR}(Q^\bullet)
 \longrightarrow C(Q^\bullet).
\end{equation}
No degreewise splitting of the projective-unit monomorphism is used.
\end{Lem}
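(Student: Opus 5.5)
The plan is to extend Lemma~\ref{lem:bounded-assembly-final} from bounded-above to arbitrary homotopically projective complexes. We use the upward brutal filtration and the lower way-out estimate \eqref{eq:lower-way-out-final}. We may take $Q^\bullet$ to be a complex of projective $B$-modules. Every homotopically projective complex is homotopy equivalent to one with projective terms: for instance a semi-free resolution, which is homotopy equivalent to $Q^\bullet$ because both are homotopically projective. The terms $\Psi(Q^i)$ and $C(Q^i)$ are then defined by Lemma~\ref{lem:projective-unit-final}.

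For each $m$, consider the brutal truncation $Q^\bullet_{\leq m}$. It is a bounded-above complex of projectives, so it lies in $\Kf{\Pmodcat B}$. Lemma~\ref{lem:bounded-assembly-final} gives isomorphisms
\[
\theta_m:\Psi(Q^\bullet_{\leq m})\lraf{\simeq}{\bf HR}(Q^\bullet_{\leq m}),
\]
natural in $m$ and satisfying $\theta_m u_{Q_{\leq m}}=\eta_{Q_{\leq m}}$. The quotient maps $Q^\bullet_{\leq m+1}\to Q^\bullet_{\leq m}$ form an inverse system with $Q^\bullet\simeq \holim_m Q^\bullet_{\leq m}$. Since the maps are degreewise split epimorphisms that are eventually constant in each degree, the strict limit is a homotopy limit (Milnor sequence with vanishing $\lim^1$). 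The same holds termwise for $\Psi$: $\Psi(Q^\bullet)=\lim_m\Psi(Q^\bullet_{\leq m})\simeq\holim_m\Psi(Q^\bullet_{\leq m})$.

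Next, ${\bf HR}$ preserves this homotopy limit, because ${\bf H}$ is a right adjoint and preserves products. The real issue is ${\bf R}$. We avoid it by a degreewise argument, in the same spirit as Step~4 of Lemma~\ref{lem:bounded-assembly-final}. First define $\alpha_Q$ as the composite
\[
\Psi(Q^\bullet)\to\holim\Psi(Q^\bullet_{\leq m})\xrightarrow{\holim\theta_m}\holim{\bf HR}(Q^\bullet_{\leq m}),
\]
and then lift it along the canonical map ${\bf HR}(Q^\bullet)\to\holim{\bf HR}(Q^\bullet_{\leq m})$. To get a genuine map into ${\bf HR}(Q^\bullet)$, and to prove invertibility, fix $r$. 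Use the triangle
\[
Q^\bullet_{>m}\to Q^\bullet\to Q^\bullet_{\leq m}\to.
\]
Here $Q^\bullet_{>m}\in\Dge{m+1}{B}$, so \eqref{eq:lower-way-out-final} gives ${\bf HR}(Q^\bullet_{>m})\in\Dge{m+1+e}{B}$, and termwise $\Psi(Q^\bullet_{>m})$ is concentrated in degrees $>m$. Choosing $m$ with $m+e>r+1$, both $H^r$ and $H^{r+1}$ of the $Q_{>m}$-pieces vanish on both sides. Hence $H^r$ of the $Q^\bullet$-terms is identified with $H^r$ of the $Q^\bullet_{\leq m}$-terms compatibly with the truncation maps. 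In particular ${\bf HR}(Q^\bullet)\to\holim_m{\bf HR}(Q^\bullet_{\leq m})$ is an isomorphism, since the Milnor $\lim^1$ term vanishes by the eventual stabilisation of cohomology. So $\alpha_Q$ is well defined, and $H^r(\alpha_Q)=H^r(\theta_m)$ is an isomorphism for every $r$.

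The unit identity $\alpha_Qu_Q=\eta_Q$ should follow by passing the identities $\theta_m u_{Q_{\leq m}}=\eta_{Q_{\leq m}}$ to the homotopy limit, using naturality of $\eta$. The subtle point, which I expect to be the main obstacle, is that a map into a homotopy limit is not determined by its components, because of $\lim^1$ of $\Hom(Q^\bullet,{\bf HR}(Q^\bullet_{\leq m})[-1])$. I would handle this exactly as Step~2 of Lemma~\ref{lem:bounded-assembly-final} did: work in $\Di_\infty(B)$, where the compatible family $\theta_m$ is a natural transformation of towers, so $\holim\theta_m$ is defined functorially and the unit identity holds coherently. Naturality in $Q^\bullet$ comes from functoriality of brutal truncation.

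For the triangle \eqref{eq:assembly-triangle-final}, apply the exact sequences of Lemma~\ref{lem:projective-unit-final} termwise to obtain a short exact sequence of complexes
\[
0\to Q^\bullet\xrightarrow{u_Q}\Psi(Q^\bullet)\to C(Q^\bullet)\to0.
\]
This yields a triangle in $\D{B}$ without any need for degreewise splitting. Replacing $\Psi(Q^\bullet)$ by ${\bf HR}(Q^\bullet)$ via $\alpha_Q$, and using $\alpha_Qu_Q=\eta_Q$, gives the natural triangle $C(Q^\bullet)[-1]\to Q^\bullet\xrightarrow{\eta_Q}{\bf HR}(Q^\bullet)\to C(Q^\bullet)$.
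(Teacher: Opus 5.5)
Your proposal is correct and follows essentially the same route as the paper: the tower of brutal truncations $Q^\bullet_{\leq m}$, Milnor sequences with vanishing $\lim^1$ for $Q^\bullet$ and $\Psi(Q^\bullet)$, the lower way-out estimate showing that ${\bf HR}(Q^\bullet)\to\holim_m{\bf HR}(Q^\bullet_{\leq m})$ is invertible, $\alpha_Q$ defined through these homotopy limits, and the termwise short exact sequence $0\to Q^\bullet\to\Psi(Q^\bullet)\to C(Q^\bullet)\to0$ giving the triangle. Your reduction to a complex with projective terms, and your explicit handling of the $\lim^1$-ambiguity of maps into a homotopy limit for the unit identity, make precise two points that the paper leaves implicit when it simply ``takes homotopy limits''.
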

{\it Proof}. For every $n\in\IZ$,
put
$ Q_n^\bullet=Q^\bullet_{\leq n}$, and denote the canonical
projection by $\pi_n:Q^\bullet\to Q_n^\bullet$.  The projections
$p_n:Q_{n+1}^\bullet\to Q_n^\bullet$ form a functorial inverse tower and
$\pi_n=p_n\pi_{n+1}$.  Each $Q_n^\bullet$ is a bounded-above complex of
projective modules, so Lemma~\ref{lem:bounded-assembly-final} applies to it.

\emph{Step 1: convergence of the towers before applying $\bf HR$.}
For each fixed $r$, the morphism
$ H^r(Q^\bullet)\longrightarrow H^r(Q_n^\bullet)$
is an isomorphism as soon as $n\geq r+1$.  Hence the inverse system
$(H^r(Q_n^\bullet))_n$ is eventually constant and its
$\varprojlim^1$ vanishes.  The Milnor exact sequence for a countable
inverse tower \cite{BokstedtNeeman} therefore shows that the canonical map
\begin{equation}\label{eq:Q-holim-detail}
 \delta_Q:Q^\bullet\lraf{\simeq}
                  \operatorname*{holim}_n Q_n^\bullet
\end{equation}
is an isomorphism in $\D{B}$.
Since $\Psi$ is applied termwise, there is an equality of complexes
$ \Psi(Q_n^\bullet) =\Psi(Q^\bullet)_{\leq n}$.
The same argument gives a natural isomorphism
\begin{equation}\label{eq:J-holim-detail}
 \delta_\Psi:\Psi(Q^\bullet)\lraf{\simeq}
              \holim_n\Psi(Q_n^\bullet).
\end{equation}
This uses only eventual constancy of the truncation tower; in particular,
it does not require $\Psi$ to preserve arbitrary products or inverse limits.

\emph{Step 2: convergence after applying $\bf HR$.}
Let $K_n^\bullet=\Ker(\pi_n)$.
Then $K_n^\bullet$ is supported in degrees at least $n+1$, and the short
exact sequence defining it induces a triangle
\[
 K_n^\bullet\longrightarrow Q^\bullet\lraf{\pi_n}Q_n^\bullet
      \longrightarrow K_n^\bullet[1].
\]
By the lower way-out estimate \eqref{eq:lower-way-out-final},
\begin{equation}\label{eq:TKn-lower-detail}
                  {\bf HR}(K_n^\bullet)
                  \in\Dge{n+1+e}{B}.
\end{equation}
Fix $r$.  If $n+1+e>r+1$, then the two groups
$H^r({\bf HR}(K_n^\bullet))$ and
$H^{r+1}({\bf HR}(K_n^\bullet))$ vanish.  The long exact cohomology sequence
thus gives
\begin{equation}\label{eq:TQ-to-trunc-detail}
 H^r({\bf HR}(Q^\bullet))
   \xrightarrow[\ H^r({\bf HR}(\pi_n))\ ]{\simeq}
 H^r({\bf HR}(Q_n^\bullet)).
\end{equation}
These isomorphisms are compatible with the transition maps.  Consequently,
for every $r$ the tower
$(H^r({\bf HR} (Q_n^\bullet)))_n$ is eventually constant, and
\[
 \varprojlim\nolimits_n^1H^{r-1}({\bf HR}(Q_n^\bullet))=0,
 \qquad
 H^r({\bf HR}(Q^\bullet))\lraf{\simeq}
 \varprojlim_nH^r({\bf HR}(Q_n^\bullet)).
\]
Applying the Milnor exact sequence once more shows that the natural map
induced by the ${\bf HR}(\pi_n)$,
\begin{equation}\label{eq:T-holim-detail}
 \delta_{\bf HR}:{\bf HR}(Q^\bullet)\lraf{\simeq}
       \holim_n({\bf HR}(Q_n^\bullet)),
\end{equation}
is an isomorphism.  This is the point at which the lower way-out estimate
replaces any unjustified claim that ${\bf HR}$ preserves arbitrary
homotopy limits.

\emph{Step 3: construction of the comparison and compatibility with the
unit.}
Lemma~\ref{lem:bounded-assembly-final} supplies natural isomorphisms
\[
 \alpha_n:\Psi(Q_n^\bullet)\lraf{\simeq}{\bf HR}((Q_n^\bullet)).
\]
Their naturality with respect to $p_n$ makes $(\alpha_n)_n$ an isomorphism
of inverse towers; equivalently, each square
\[
\xymatrix@C=5.0em{
\Psi(Q_{n+1}^\bullet)\ar[r]^-{\alpha_{n+1}}\ar[d]_-{\Psi(p_n)}
  &{\bf HR}(Q_{n+1}^\bullet)\ar[d]^-{{\bf HR}(p_n)}\\
\Psi(Q_n^\bullet)\ar[r]_-{\alpha_n}
  &{\bf HR}(Q_n^\bullet)
}
\]
commutes.  Define
\begin{equation}\label{eq:unbounded-alpha-construction}
 \alpha_Q
 :=\delta_{\bf HR}^{-1}
   \circ\holim_n(\alpha_n)
   \circ\delta_\Psi:
 \Psi(Q^\bullet)\longrightarrow{\bf HR}(Q^\bullet).
\end{equation}
Every arrow in \eqref{eq:unbounded-alpha-construction} is an isomorphism
and is functorial in $Q^\bullet$; hence so is $\alpha_Q$.

Let $u_n:Q_n^\bullet\to \Psi(Q_n^\bullet)$ be the termwise projective-unit
map.  The unit compatibility in Lemma~\ref{lem:bounded-assembly-final}
says
\[
                         \alpha_nu_n=\eta_{Q_n}
                         \qquad(n\in\mathbb Z).
\]
Taking homotopy limits and using the natural identifications
\eqref{eq:Q-holim-detail}, \eqref{eq:J-holim-detail}, and
\eqref{eq:T-holim-detail} gives the commutative triangle
\[
\xymatrix@C=4.8em{
Q^\bullet\ar[r]^-{u_Q}\ar[dr]_-{\eta_Q}
  &\Psi(Q^\bullet)\ar[d]^-{\alpha_Q}\\
  &{\bf HR}(Q^\bullet).
}
\]
where $u_Q:Q^\bullet\to \Psi(Q^\bullet)$ is the termwise map.  Thus $u_Q$
represents the derived adjunction unit under $\alpha_Q$.

\emph{Step 4: the assembly triangle.}
Applying the natural exact sequence
\eqref{eq:projective-unit-final} degreewise gives a short exact sequence of
complexes
\[
 0\longrightarrow Q^\bullet\xrightarrow{u_Q}\Psi(Q^\bullet)
   \longrightarrow C(Q^\bullet)\longrightarrow0.
\]
Its associated triangle is
\[
 Q^\bullet\lraf{u_Q}\Psi(Q^\bullet)
   \longrightarrow C(Q^\bullet)\longrightarrow Q^\bullet[1].
\]
Replacing $\Psi(Q^\bullet)$ by ${\bf HR}(Q^\bullet)$ through $\alpha_Q$ and
rotating yields \eqref{eq:assembly-triangle-final}.  The last complex has
all its terms in $\mathscr{E}$ by
Lemma~\ref{lem:projective-unit-final}(2), and no degreewise splitting of
$u_Q$ has been used.
\overpr
\medskip

Since $\mathscr{E}$ is an exact subcategory of $B\Modcat$, the inclusion functor $\lambda:\mathscr{E}\hookrightarrow B\Modcat$ 
is an exact functor. There is a triangle functor 
${\bf \overline{j}}:=\D{\lambda_*}:\D{\mathscr{E}}\ra \D{B}$ which is the derived restriction functor induced by $\lambda$.

\begin{Lem}\label{lem:strict-acyclicity-final}
Let $\cpx{X}$ be a complex whose terms belong to $\mathscr{E}$, that is, $\cpx{X}\in\K{\mathscr{E}}$.

(1) ${\bf j}(\cpx{X})\in \mathscr{Y}_B$ in $\D{B}$.

(2) $\cpx{X}$ is exact in $\C{B}$ if and only if $\cpx{X}\in \Kac{\mathscr{E}}$.
     
(3) If ${\bf \overline{j}}(\cpx{X})=0$, then $\cpx{X}\in \Kac{\mathscr{E}}$. 
\end{Lem}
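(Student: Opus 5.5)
The three parts will be proved in order, and all three rest on two facts established earlier. First, $\mathscr Y_B$ is a localizing, hence thick, triangulated subcategory of $\D{B}$ closed under products and coproducts. Second, $\mathscr E$ is closed under extensions, kernels of epimorphisms and cokernels of monomorphisms. This second fact is the $d$-symmetry of Theorem~\ref{thm:sym-new} applied to short exact sequences. Equivalently, use the 2-out-of-3 property for ${\bf R}$ on the triangle attached to a short exact sequence: if two terms of $0\to X\to M\to Y\to 0$ lie in $\mathscr E$, the third has ${\bf R}$ equal to a shift of a cone of objects in $\mathscr Y_B$. That cone lies in $\mathscr Y_B$ and is concentrated in a bounded range. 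By Lemma~\ref{lem:amplitude-new}, a stalk module $N$ with ${\bf R}(N)=0$ up to a shift has vanishing ${\bf R}$. Hence the third term lies in $\mathscr E$.

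For (1), I will write $\cpx{X}$ as a homotopy colimit of its brutal truncations, and those as homotopy limits in the other direction. Concretely:
\begin{itemize}
\item For a bounded complex, induct on the length using the brutal triangles $\cpx{X}_{\geq p+1}\to\cpx{X}_{\geq p}\to X^p[-p]\to$. Each stalk $X^p[-p]$ lies in $\mathscr Y_B$, and $\mathscr Y_B$ is triangulated.
\item For bounded-below complexes, use $\cpx{X}\simeq\hocolim_n\cpx{X}_{\leq n}$ (the telescope of the subcomplexes) together with closure of $\mathscr Y_B$ under coproducts and cones.
\item For arbitrary complexes, use $\cpx{X}\simeq\holim_m\cpx{X}_{\geq m}$, the Milnor triangle from products, as in Step~1 of Lemma~\ref{lem:unbounded-assembly-final}, and closure under products.
\end{itemize}
Alternatively, and more simply, ${\bf R}$ is computed by $\cpx{V}\otimes_B-$ with $\cpx{V}$ bounded, so ${\bf R}(\cpx{X})$ is the totalisation of the double complex $\cpx{V}\otimes_B\cpx{X}$. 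Its columns $\cpx{V}\otimes_B X^p$ are acyclic, and the double complex has only finitely many nonzero rows. So its totalisation is acyclic by a finite-filtration argument; this is the route I would use.

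For (2), the direction ``strictly exact $\Rightarrow$ exact'' is immediate. Conversely, suppose $\cpx{X}$ is exact with all $X^i\in\mathscr E$; we must show every cycle $Z^i$ lies in $\mathscr E$. The exact sequences $0\to Z^i\to X^i\to Z^{i+1}\to 0$ give triangles $Z^i\to X^i\to Z^{i+1}\to Z^i[1]$, hence ${\bf R}(Z^{i+1})\simeq{\bf R}(Z^i)[1]$. Iterating gives ${\bf R}(Z^{i})\simeq{\bf R}(Z^{i-k})[k]$ for every $k\geq0$. By Lemma~\ref{lem:amplitude-new}, ${\bf R}(Z^{i-k})$ has cohomology only in $[a,b]$, so its shift by $k$ has cohomology in $[a-k,b-k]$. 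Meanwhile ${\bf R}(Z^i)$ has cohomology in $[a,b]$. Taking $k>m=b-a$, these ranges are disjoint, so ${\bf R}(Z^i)=0$. This is exactly the argument of Theorem~\ref{thm:sym-new}, using that the exact sequence is unbounded to the left.

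For (3), ${\bf \overline{j}}(\cpx{X})=0$ means that $\cpx{X}$, viewed in $\D{B}$, is zero, i.e.\ $\cpx{X}$ is acyclic in $\C{B}$, and (2) then gives strict exactness. The only delicate point I expect is making sure ${\bf \overline{j}}$ really is computed by regarding $\cpx{X}$ as a complex of $B$-modules. This holds because $\lambda$ is exact and sends $\Kac{\mathscr E}$ into acyclic complexes, so $\D{\lambda_*}$ is induced by $\K{\mathscr E}\to\K{B}\to\D{B}$. The main obstacle overall is (2), specifically the use of unboundedness: for a bounded-below exact complex the induction terminates trivially at the leftmost cycle (which is $0$). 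So I would handle all cases uniformly by the shift-disjointness argument above.
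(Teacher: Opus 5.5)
Your proposal is correct. Parts (2) and (3) follow the paper. In (2) you iterate ${\bf R}(Z^i)\simeq{\bf R}(Z^{i-k})[k]$ to the left, whereas the paper iterates ${\bf R}(Z^i)\simeq{\bf R}(Z^{i+r})[-r]$ to the right, but the amplitude-disjointness argument from Lemma~\ref{lem:amplitude-new} is the same. Part (3) is the same reduction to (2).

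For (1), the route you say you would actually use differs from the paper's. The paper first reduces to bounded complexes through homotopy (co)limits of truncations, using only that $\mathscr Y_B$ is a triangulated subcategory closed under products and coproducts. It then treats the bounded case by induction on brutal-truncation triangles with stalks $X^p[-p]\in\mathscr Y_B$. You instead use the chain-level model ${\bf R}(\cpx{X})\simeq\cpx{V}\otimes_B\cpx{X}$ after forgetting the $A$-action. This holds for every complex, since $\cpx{V}$ is a bounded complex of projectives and hence $K$-flat. You then observe that this is the total complex of a double complex with acyclic columns $\cpx{V}\otimes_B X^p$ and nonzero rows only in $V$-degrees $[a,b]$.

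Your argument is shorter, needs no Milnor sequences, and uses the same finiteness of $\cpx{V}$ that drives (2). The paper's route is more formal: it needs no explicit model of ${\bf R}$, only the closure properties of $\mathscr Y_B$. As written, however, it appeals to canonical truncations. Their end terms $Z^n$ or $\Coker(d^{n-1})$ need not lie in $\mathscr E$ (e.g.\ $\Ker f$ for a morphism $f$ in $\mathscr E$). So the paper's reduction only goes through once brutal truncations are used instead, and your route avoids the issue entirely.

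Two small precision points.
\begin{itemize}
\item The filtration that makes your double-complex argument work is the filtration by columns. It is infinite, but finite in each total degree because only finitely many rows are nonzero. The row filtration is finite, but its graded pieces $V^q\otimes_B\cpx{X}$ need not be acyclic.
\item In your alternative bullet list, $\operatorname{holim}$ and $\operatorname{hocolim}$ are interchanged. The complex $\cpx{X}_{\leq n}$ is a quotient complex of $\cpx{X}$, not a subcomplex. So a bounded-below complex is $\operatorname{holim}_n\cpx{X}_{\leq n}$ (use products), and an arbitrary complex is $\operatorname{hocolim}_m\cpx{X}_{\geq m}$ of bounded-below subcomplexes (use coproducts). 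Since $\mathscr Y_B$ is closed under both, the conclusion is unaffected.
\end{itemize}
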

{\it Proof}.
(1) We recall the following results: Every complex over a ring $R$ is generated by a bounded-above complex and a bounded-below complex obtained by canonical truncations;
Every bounded-above complex over $R$ can be expressed as the homotopy limit of its bounded quotient complexes, which are obtained by canonical truncations; Every bounded-below complex over $R$ can be expressed as the homotopy colimit of its bounded subcomplexes, which are obtained by canonical truncations.
Since $\mathscr{Y}_B=\Ker({\bf R})$, the functor ${\bf R}$ preserves coproducts and products in $\D{B}$. 
By the definitions of homotopy colimits and homotopy colimits,
it is enough to prove that ${\bf j}(\cpx{X})\in \mathscr{Y}_B$ for all bounded complexes $\cpx{X}$.
Let $$\cpx{X}:=0\lra X^0\lra X^{1}\lra \cdots\lra X^{n-1}\lra X^n\lra 0.$$ 
By brutal truncations, there are a series of triangles
$$\aligned
\cpx{X}_{\geq 1}\lra &\cpx{X} \lra X^0\lra \cpx{X}_{\geq 1}[1],\\
\cpx{X}_{\geq 2}\lra &\cpx{X}_{\geq 1} \lra X^1[-1]\lra \cpx{X}_{\geq 2}[1],\\
&\cdots\hspace{0.6cm}\cdots\\
X^{n}[-n]\lra &\cpx{X}_{\geq n-1}\lra X^{n-1}[-(n-1)]\lra X^{n}[-n+1].
\endaligned$$
Since $X^{0}, X^1,\cdots, X^{n}\in \mathscr{E}=\mathscr{Y}_B\cap B\Modcat=\Ker({\bf R})\cap B\Modcat$, we get that 
$$X^{0}, X^1[-1],\cdots, X^{n-1}[-(n-1)], X^{n}[-n]\in\Ker({\bf R}).$$
Applying ${\bf R}$ to the last triangle yields a triangle 
$${\bf R}(X^{n}[-n])\lra {\bf R}(\cpx{X}_{\geq n-1})\lra {\bf R}(X^{n-1}[-(n-1)])\lra {\bf R}(X^{n}[-n])[1]. $$
This implies that 
$\cpx{X}_{\geq n-1}\in \mathscr{Y}_B$. The same process ultimately yields $\cpx{X}\in \mathscr{Y}_B$. 

(2) Assume that $X^\bullet$ is exact in $B\Modcat$.  Exactness gives, for every
$i$, a short exact sequence
$$
 0\lra Z^i(X^\bullet)\lra X^i
   \lraf{d^i}Z^{i+1}(X^\bullet)\lra 0.\hspace{1cm} (\diamondsuit)
$$
Since $X^i\in\mathscr{E}$, one has ${\bf R}(X^i)=0$.  The triangle obtained from
$(\diamondsuit)$ consequently yields an isomorphism
${\bf R}(Z^i(X^\bullet))\simeq{\bf R}(Z^{i+1}(X^\bullet))[-1]$ in $\D{A}$.
Iteration gives
${\bf R} (Z^i(X^\bullet))\simeq {\bf R}(Z^{i+r}(X^\bullet))[-r]$ for all $r\geq 0$.
 
For every $B$-module $M$, Lemma \ref{lem:amplitude-new} gives
${\bf R}(M)\in\mathscr{D}^{[a,b]}(A)$.
Hence 
${\bf R}(Z^{i+r}(X^\bullet))[-r]\in\mathscr{D}^{[a,b]}(A)$, while 
${\bf R} (Z^i(X^\bullet))\in\mathscr{D}^{[a+r,b+r]}(A)$.  Choose $r>b-a$.  These intervals are disjoint, so
the common object belongs to
$\mathscr{D}^{[a,b]}(A)\cap\mathscr{D}^{[a+r,b+r]}(A)=0$.
Thus ${\bf R}(Z^i(X^\bullet))=0$ for every $i$, or equivalently
$Z^i(X^\bullet)\in\mathscr{E}$.  
This implies that $\cpx{X}\in\K{\mathscr{E}}$ is a 
strictly exact complex. Thus $\cpx{X}\in\Kac{\mathscr{E}}$.

By the definition of strictly complexes, it is obvious that $\cpx{X}$ is exact in $\C{B}$ if $\cpx{X}\in\Kac{\mathscr{E}}$. 

(3) Let $\cpx{X}\in\D{\mathscr{E}}$ with $\overline{{\bf j}}(\cpx{X})=0$ in $\D{B}$, i.e.,
$\overline{{\bf j}}(\cpx{X})$ is an exact sequence in $\C{B}$. Since $\overline{{\bf j}}$ 
is induced by the exact, inclusion functor $\mathscr{E}\subseteq B\Modcat$, $\cpx{X}$ is exact in $\C{B}$.
(2) implies that $\cpx{X}\in\K{\mathscr{E}}$ is a strictly exact complex, i.e., $\cpx{X}\in\Kac{\mathscr{E}}$.
Notice that $\D{\mathscr{E}}=\K{\mathscr{E}}/\Kac{\mathscr{E}}$, so $\cpx{X}=0$ in $\D{\mathscr{E}}$.
\overpr
\medskip

Let $\Phi$ be the composition of the following functors
$$\D{B}\stackrel{\simeq}{\lra}\K{B}_{P}\xrightarrow{C=\Coker(\eta_{-})}\K{\mathscr{E}}
\stackrel{Q}{\lra}\D{\mathscr{E}}.$$

\begin{Lem}\label{lem:Phi-final}
For every $\cpx{X}\in \D{B}$, there exists a triangle
$$ \overline{{\bf j}}\Phi(\cpx{X})[-1]\longrightarrow \cpx{X}\longrightarrow{\bf HR} (\cpx{X})
       \longrightarrow \overline{{\bf j}}\Phi(\cpx{X})$$
in $\D{B}$. In particular, if $\cpx{X}\in\mathscr{Y}_B$, then
$\overline{{\bf j}}\Phi(\cpx{X})[-1]$ is isomorphic to $\cpx{X}$ in $\D{B}$.
\end{Lem}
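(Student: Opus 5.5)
The plan is to reduce everything to the assembly triangle of Lemma~\ref{lem:unbounded-assembly-final}. Given $\cpx{X}\in\D{B}$, choose a homotopically projective resolution $Q^\bullet\to\cpx{X}$. This is the object singled out by the first equivalence $\D{B}\simeq\K{B}_P$ in the definition of $\Phi$. By definition, $\Phi(\cpx{X})$ is the image in $\D{\mathscr E}$ of the termwise complex $C(Q^\bullet)=\Coker(u_Q)$. By Lemma~\ref{lem:projective-unit-final}(2) its terms lie in $\mathscr E$, so $C(Q^\bullet)\in\K{\mathscr E}$. Applying $\overline{\bf j}=\D{\lambda_*}$ simply regards this complex of modules in $\mathscr E$ as an object of $\D{B}$, so
\[
\overline{\bf j}\Phi(\cpx{X})\simeq C(Q^\bullet)\quad\text{in }\D{B}.
\]
Lemma~\ref{lem:unbounded-assembly-final} then gives the triangle
\[
C(Q^\bullet)[-1]\lraf{\partial_Q}Q^\bullet\lraf{\eta_Q}{\bf HR}(Q^\bullet)\lra C(Q^\bullet).
\]
Transporting it along the isomorphisms $Q^\bullet\simeq\cpx{X}$ and ${\bf HR}(Q^\bullet)\simeq{\bf HR}(\cpx{X})$ yields the asserted triangle, with the middle map equal to the unit $\eta_{\cpx X}$ by naturality of $\eta$.

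The main point needing care is well-definedness and naturality. The resolution is unique only up to unique isomorphism in $\K{B}$, and $C$ must be a functor on $\K{B}_P$. A chain map $f:Q^\bullet\to Q'^\bullet$ induces $C(f)$ termwise via Lemma~\ref{lem:projective-unit-final}. A null-homotopy $f=dh+hd$ with $h^i:Q^i\to Q'^{i-1}$ is sent by the additive functors $\Psi$ and $C$ to the homotopy $C(h)$. Hence $C$ descends to $\K{B}_P\to\K{\mathscr E}$, and the triangle is independent of choices up to isomorphism. The short exact sequence $0\to Q^\bullet\to\Psi(Q^\bullet)\to C(Q^\bullet)\to0$ is natural in $Q^\bullet$. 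Together with the naturality of $\alpha_Q$, this makes the triangle natural in $\cpx{X}$, at least up to the usual non-uniqueness of the connecting morphism. That is enough for the existence statement.

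For the final assertion, let $\cpx{X}\in\mathscr Y_B=\Ker({\bf R})$. Then ${\bf HR}(\cpx{X})={\bf H}(0)=0$, so the triangle reads
\[
\overline{\bf j}\Phi(\cpx X)[-1]\to\cpx X\to 0\to\overline{\bf j}\Phi(\cpx X),
\]
and the first morphism is an isomorphism. Thus $\overline{\bf j}\Phi(\cpx{X})[-1]\simeq\cpx{X}$. I expect the only real obstacle to be the identification $\overline{\bf j}\Phi(\cpx X)\simeq C(Q^\bullet)$ for unbounded complexes. Here one should note that $\D{\lambda_*}$ is computed termwise on $\K{\mathscr E}$ and sends $\Kac{\mathscr E}$ to acyclic complexes, so no resolution is needed on the $\mathscr E$ side.
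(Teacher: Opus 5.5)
Your proposal is correct and follows the paper's proof essentially step for step. You take a homotopically projective resolution $Q^\bullet$ of $\cpx{X}$, identify $\overline{\bf j}\Phi(\cpx{X})$ with the termwise cokernel $C(Q^\bullet)$, apply the assembly triangle of Lemma~\ref{lem:unbounded-assembly-final}, transport it along $Q^\bullet\simeq\cpx{X}$ and ${\bf HR}(Q^\bullet)\simeq{\bf HR}(\cpx{X})$, and use ${\bf HR}(\cpx{X})=0$ for $\cpx{X}\in\mathscr Y_B$. Your extra checks are that the additive functor $C$ preserves null-homotopies, so $\Phi$ is well defined on $\K{B}_P$, and that the resolution should have projective terms so $C$ applies termwise; the paper leaves both points tacit.
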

{\it Proof}. Let $\cpx{Q}$ be the  homotopically projective resolution of $\cpx{X}$.
By the definition of $\Phi$, $\overline{{\bf j}}\Phi(\cpx{X})$ is isomorphic to $\Coker(\eta_{\cpx{Q}})$ in
$\D{B}$.
By Lemma \ref{lem:unbounded-assembly-final}, there is a triangle 
$$
\Coker(\eta_{\cpx{Q}})[-1]\lraf{}Q^\bullet
 \xrightarrow{\eta_{\cpx{Q}}=(\eta_{Q^i})}{\bf HR}(Q^\bullet)\longrightarrow \Coker(\eta_{\cpx{Q}}).
$$
Since $\cpx{Q}\lraf{\simeq} \cpx{X}$ and ${\bf HR}(Q^\bullet)\lraf{\simeq} {\bf HR}(\cpx{X})$
in $\D{B}$, there exists a triangle in $\D{B}$
$$
 \overline{{\bf j}}\Phi(\cpx{X})[-1]\longrightarrow \cpx{X}\longrightarrow{\bf HR} (\cpx{X})
       \longrightarrow \overline{{\bf j}}\Phi(\cpx{X}).
$$
In particular, if $\cpx{X}\in\mathscr{Y}_B$, then ${\bf R}(\cpx{X})=0$. Thus $\overline{{\bf j}}\Phi(\cpx{X})[-1]\simeq 
\cpx{X}$ in $\D{B}$.
\overpr

\begin{Lem}
\label{lem:conflation-triangle-final}
Let $\mathcal{A}$ be an exact category.  A degreewise conflation of complexes
$ 0\ra X^\bullet\xrightarrow{u}Y^\bullet
   \xrightarrow{v}Z^\bullet\ra0$
induces a natural distinguished triangle
$X^\bullet\xrightarrow{}Y^\bullet\xrightarrow{}Z^\bullet
       \ra X^\bullet[1]$
in $\D{\mathcal{A}}$.
\end{Lem}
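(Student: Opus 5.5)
The plan is the standard mapping-cone argument in $\K{\mathcal A}$, followed by passage to the Verdier quotient $\D{\mathcal A}=\K{\mathcal A}/\Kac{\mathcal A}$. Given the degreewise conflation $0\to X^\bullet\xrightarrow{u}Y^\bullet\xrightarrow{v}Z^\bullet\to0$, consider the mapping cone $\con(u)$, with $\con(u)^i=X^{i+1}\oplus Y^i$ and the usual differential. The chain map $w=(0,v):\con(u)\to Z^\bullet$ is well defined because $uv=0$. In $\K{\mathcal A}$ there is the standard triangle $X^\bullet\xrightarrow{u}Y^\bullet\to\con(u)\to X^\bullet[1]$, and this remains distinguished after applying the quotient functor $Q:\K{\mathcal A}\to\D{\mathcal A}$. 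So it suffices to show that $w$ becomes an isomorphism in $\D{\mathcal A}$. We then define the triangle $X^\bullet\to Y^\bullet\to Z^\bullet\to X^\bullet[1]$ by replacing $\con(u)$ with $Z^\bullet$ via $Q(w)$; the connecting morphism is $Q(w)^{-1}$ followed by the projection $\con(u)\to X^\bullet[1]$.

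To show $Q(w)$ is invertible, I will show that $\con(w)$ lies in $\Kac{\mathcal A}$, that is, that it is strictly exact. Rather than compute with $\con(w)$ directly, I will first show that the kernel of the degreewise split-epimorphism $w$ is contractible. Its kernel in degree $i$ is $X^{i+1}\oplus \Ker(v^i)\cong X^{i+1}\oplus X^i$ via $u$, and with the induced differential this is $\con(1_X)$, which is contractible. The degreewise sequence $0\to\con(1_X)\to\con(u)\xrightarrow{w}Z^\bullet\to0$ is then a degreewise conflation: the component $X^{i+1}\oplus X^i\to X^{i+1}\oplus Y^i$ is $1\oplus u$, a direct sum of conflations.

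From here I see two routes. The first is to invoke the standard fact (Neeman, Keller, Bühler) that for a degreewise conflation $0\to K\to M\to N\to 0$ with $K$ contractible, the map $M\to N$ has strictly exact cone. The second is to write the cone of $w$ explicitly as an extension of the cone of $0\to N\to N$ (contractible) by $K[1]$ (contractible), and conclude that it is homotopy equivalent to a contractible complex. Alternatively, and more directly, one can check that $\con(w)$ is acyclic with cycles in $\mathcal A$: the total complex of the conflation $0\to X\to Y\to Z\to 0$, viewed as a three-row double complex, is strictly exact. This follows by filtering by rows, or equivalently by noting that $\con(w)$ is homotopy equivalent to the totalisation $\mathrm{Tot}(X\to Y\to Z)$, whose cycles are obtained from the conflations by pullback and pushout and therefore stay in $\mathcal A$.

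Naturality follows because every construction (the cone, $w$, and the projection) is functorial in morphisms of conflations of complexes, and $Q$ is a functor.

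I expect the main obstacle to be the strict exactness of $\con(w)$, specifically that its cycles lie in $\mathcal A$ rather than merely that it is acyclic in the ambient module category. Acyclicity alone is immediate from the long exact sequence. For the cycles, in our application $\mathcal A=\mathscr E$ is extension-closed in $B\Modcat$, so membership in $\mathcal A$ can instead be checked with Lemma \ref{lem:strict-acyclicity-final}(2): $\con(w)$ has terms $X^{i+2}\oplus Y^{i+1}\oplus Z^i\in\mathscr E$ and is exact in $\C{B}$, hence lies in $\Kac{\mathscr E}$. I would present the general Quillen-exact argument briefly and note this shortcut.
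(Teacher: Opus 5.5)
Your main line is the paper's argument: compare $\con(u)$ with $Z^\bullet$ via $w=(0,v)$, and show that $\con(w)$, the totalisation of $X^\bullet\to Y^\bullet\to Z^\bullet$, is strictly exact. Your shortcut for $\mathcal A=\mathscr E$ via Lemma~\ref{lem:strict-acyclicity-final}(2) is also valid, and it is not circular.

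Two side claims should be dropped or corrected.

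First, your second route is false. The extension $0\to\con(1_X)[1]\to\con(w)\to\con(1_Z)\to0$ is only a degreewise conflation, not degreewise split. Such extensions of contractible complexes need not be contractible. For example, if $X^\bullet,Y^\bullet,Z^\bullet$ are concentrated in degree $0$, then $\con(w)$ is $X\xrightarrow{u}Y\xrightarrow{v}Z$. This complex is acyclic, but it is contractible only if the conflation splits. Relatedly, $w$ is a degreewise deflation, not a split epimorphism; this does not affect your computation $\Ker(w)\cong\con(1_X)$.

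Second, filtering the totalisation by rows gives graded pieces $X^\bullet,Y^\bullet,Z^\bullet$, which are not acyclic. In any case, a filtration argument does not by itself place cycles in $\mathcal A$. The cleanest justification of this step, which both you and the paper leave vague, is direct. The degree-$i$ cycles of $\con(w)$ sit in a short exact sequence of modules
$0\to X^{i+1}\to Z^i(\con(w))\to Z^i\to0$,
where $Z^i$ is the $i$-th term of $Z^\bullet$. In the paper's setting, $\mathcal A$ is an extension-closed subcategory of a module category, so these cycles lie in $\mathcal A$ by extension closure.
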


{\it Proof}.
There is a canonical chain map defined by
$$
 \con(u)\longrightarrow Z^\bullet,\qquad (y,x)\mapsto(y)v.
$$
Its cone is, up to a shift and the standard sign convention, the total
complex of the displayed degreewise conflation.  Filter this total complex
by its three columns.  Its associated graded pieces are shifts of the
three-term acyclic complexes
\[
 0\longrightarrow X^i\longrightarrow Y^i\longrightarrow Z^i
   \longrightarrow0.
\]
The filtration is finite in every total degree.  The pushout--pullback
axioms of an exact category show that the total complex is acyclic in
$\mathcal A$.  Thus $\con(u)\to Z^\bullet$ is an isomorphism in
$\D{\mathcal{A}}$, and the mapping-cone triangle of $u$ becomes the required
triangle.
\overpr

\begin{Lem}\label{lem:pushout-counit-final}
Let $\cpx{X}\in\C{\mathscr{E}}$, and $q_{\overline{{\bf j}} (X^\bullet)}:{_p(\overline{{\bf j}} (X^\bullet))}\ra 
\overline{{\bf j}} (X^\bullet)$ be the homotopically projective  resolution of $\overline{{\bf j}} (X^\bullet)$.
Then 

(1) there exists a commutative diagram in $\C{B}$
$$\xymatrix@C1.7cm{0\ar@{>}"1,2"^(0.45){}
&{_p(\overline{{\bf j}}(\cpx{X}))}\ar@{>}"1,3"^(0.48){H^0(\eta_{_p(\overline{{\bf j}}(\cpx{X}))})}\ar@{>}"2,2"_(0.45){q_{\overline{{\bf j}}(\cpx{X})}}
&\Psi(_p(\overline{{\bf j}}(\cpx{X}))\ar@{>}"1,4"^(0.45){}\ar@{>}"2,3"_(0.45){}
&C(p(\overline{{\bf j}}(\cpx{X})))\ar@{>}"1,5"^(0.45){}\ar@{=}"2,4"_(0.45){}
&0\\
0\ar@{>}"2,2"^(0.45){}&
\overline{{\bf j}}(\cpx{X})\ar@{>}"2,3"^(0.45){}
&\cpx{W}\ar@{>}"2,4"^(0.45){}
&C(p(\overline{{\bf j}}(\cpx{X})))\ar@{>}"2,5"^(0.45){}
&0
}$$
Moreover, the lower row  is a conflation in $\C{\mathscr{E}}$.

(2) there is a natural transformation
$\delta:\Phi(\overline{{\bf j}}(-))[-1]\longrightarrow Id_{\D{\mathscr{E}}}$
on $\D{\mathscr{E}}$ such that
the following  diagram is commutative in $\D{B}$ 
$$\xymatrix{
\Phi(\overline{{\bf j}}(\cpx{X}))[-1]\ar@{>}"1,2"^(0.65){\delta_{\cpx{X}}}\ar@{>}"2,1"_(0.53){\partial_{\overline{{\bf j}}(\cpx{X})}}
&\cpx{X}\ar@{=}"2,2"_(0.53){}\\
{_p(\overline{{\bf j}}(\cpx{X}))}\ar@{>}"2,2"_(0.53){q_{\overline{{\bf j}}(\cpx{X})}}&\overline{{\bf j}}(\cpx{X})
}
$$
\end{Lem}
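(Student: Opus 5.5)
For part (1) I will construct the lower row as a degreewise pushout. Write $P^\bullet:={_p(\overline{{\bf j}}(\cpx{X}))}$, a homotopically projective complex with all terms projective. Lemma~\ref{lem:projective-unit-final} applied termwise gives a short exact sequence of complexes
\[
0\lra P^\bullet\xrightarrow{u_P}\Psi(P^\bullet)\lra C(P^\bullet)\lra 0 ,
\]
with $u_P=(\eta^0_{P^i})_i$ injective in every degree. Define $\cpx{W}$ as the degreewise pushout of $u_P$ along the chain map $q:=q_{\overline{{\bf j}}(\cpx{X})}:P^\bullet\to\overline{{\bf j}}(\cpx{X})$, so that $W^i=(\Psi(P^i)\oplus X^i)/\{(u(p),-q(p))\}$. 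The differentials of $\Psi(P^\bullet)$ and of $\cpx{X}$ descend to $\cpx{W}$, because both $u_P$ and $q$ are chain maps. Pushouts of monomorphisms in $B\Modcat$ are monomorphisms with the same cokernel. This gives the commutative diagram with exact lower row $0\to X^i\to W^i\to C(P^i)\to 0$ in each degree, and the right-hand vertical map is the identity.

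It remains to show $W^i\in\mathscr E$. We have $X^i\in\mathscr E$ by hypothesis and $C(P^i)\in\mathscr E$ by Lemma~\ref{lem:projective-unit-final}(2). Since $\mathscr E$ is extension-closed by Theorem~\ref{thm:sym-new}, $W^i\in\mathscr E$. So the lower row is a degreewise short exact sequence with all terms in $\mathscr E$, that is, a conflation in $\C{\mathscr E}$.

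For part (2), Lemma~\ref{lem:conflation-triangle-final} turns this conflation into a triangle
\[
\cpx{X}\lra\cpx{W}\lra C(P^\bullet)\lraf{\gamma}\cpx{X}[1]
\]
in $\D{\mathscr E}$. By the definition of $\Phi$, $\Phi(\overline{{\bf j}}(\cpx{X}))$ is the image of $C(P^\bullet)$ in $\D{\mathscr E}$. I will define $\delta_{\cpx{X}}:=\gamma[-1]:\Phi(\overline{{\bf j}}(\cpx{X}))[-1]\to\cpx{X}$. I take the connecting morphism $\partial_{\overline{{\bf j}}(\cpx{X})}$ to be the first map of the assembly triangle of Lemma~\ref{lem:unbounded-assembly-final}, which is the connecting morphism of the upper row.

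Applying $\overline{{\bf j}}$ to the morphism of short exact sequences in (1), I get a morphism of the associated triangles in $\D{B}$. It has vertical maps $q$, the pushout map and the identity on $C(P^\bullet)$. Naturality of connecting morphisms for morphisms of degreewise short exact sequences then gives $\overline{{\bf j}}(\delta_{\cpx{X}})=q\circ\partial$, which is the required commutative square.

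The main obstacle is naturality of $\delta$ in $\cpx{X}\in\D{\mathscr E}$, since morphisms there are roofs, and I would proceed in three steps.
\begin{itemize}
\item[(a)] Homotopically projective resolutions are functorial up to homotopy, and $\Psi$, $C$ are additive and applied termwise, so they preserve homotopies. Hence a chain map $f:\cpx{X}\to\cpx{X'}$ in $\C{\mathscr E}$ induces a compatible map of pushout diagrams up to homotopy.
\item[(b)] Naturality of the connecting morphism then gives $\delta_{\cpx{X'}}\circ\Phi\overline{{\bf j}}(f)[-1]=f\circ\delta_{\cpx{X}}$ in $\D{\mathscr E}$, first for maps in $\K{\mathscr E}$.
\item[(c)] To extend to $\D{\mathscr E}$, it suffices to check that $\Phi\overline{{\bf j}}$ sends quasi-isomorphisms of $\D{\mathscr E}$ (cones in $\Kac{\mathscr E}$) to isomorphisms. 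This holds because $\overline{{\bf j}}$ kills $\Kac{\mathscr E}$, and $\Phi$ factors through $\D{B}$ via $\D{B}\simeq\K{B}_P$. The transformation therefore descends to the Verdier quotient by its universal property.
\end{itemize}
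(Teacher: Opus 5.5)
Your proposal is correct and follows the paper's proof: the degreewise pushout of the termwise projective-unit sequence along $q$, extension-closure of $\mathscr E$ to get $W^i\in\mathscr E$, Lemma~\ref{lem:conflation-triangle-final} to define $\delta_{X^\bullet}$ as the shifted connecting morphism, and the comparison of connecting morphisms for the morphism of short exact sequences to obtain the square. Your treatment of naturality, descending from chain maps to $\D{\mathscr E}$ via the universal property of the Verdier quotient, is a slightly more careful version of the paper's argument; before invoking the pushout property, make the square $q_{X'}\circ{}_p(f)=f\circ q_X$ commute strictly, which you can do by taking $q_{X'}$ surjective and lifting the homotopy through it, or by using a functorial resolution as the paper tacitly does.
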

{\it Proof}.
(1) Push
, applied termwise to ${_p(\overline{{\bf j}}(\cpx{X}))}$, out along the quasi-isomorphism
$q_{\overline{{\bf j}}(\cpx{X})}$. This yields the commutative diagram in (1).
For brevity set
\[
 \cpx{Q}_{X^\bullet}={_p(\overline{{\bf j}}(\cpx{X}))},\qquad
 \cpx{J}_{X^\bullet}=\Psi(_p(\overline{{\bf j}}(\cpx{X})),\qquad
 \cpx{C}_{X^\bullet}=C(p(\overline{{\bf j}}(\cpx{X}))),
\]
and write $\eta_X:\cpx{Q}_{X^\bullet}\ra \cpx{J}_{X^\bullet}$ for the termwise
projective-unit monomorphism.
Degreewise, the pushout can be written explicitly as
\begin{equation}\label{eq:PX-explicit-final}
W^i=
 \bigl(J_{\cpx{X}}^i\oplus X^i\bigr)/
 \bigl\{(a)(\eta_{\cpx{X}}^i,-(a)q_{\overline{{\bf j}}(\cpx{X})}^i)\mid a\in Q_{\cpx{X}}^i\bigr\}.
\end{equation}
Because $\eta_{\cpx{X}}$ and $q_{\overline{{\bf j}}(\cpx{X})}$ are chain maps, the direct-sum
differential on $\cpx{J}_{X^\bullet}\oplus X^\bullet$ preserves the submodules in
\eqref{eq:PX-explicit-final}; it therefore induces a differential on
$\cpx{W}$.  For any $i\in\IZ$, notice that the maps of the lower row in (1) are 
$$
 \lambda^i_{\cpx{X}}:X^i\longrightarrow W_{\cpx{X}}^i,\quad x\mapsto(0,x),
 \qquad
 \pi^i_{\cpx{X}}:W_{\cpx{X}}^i\longrightarrow C_{\cpx{X}}^i,\quad(j,x)\mapsto(j)\pi_{\cpx{X}}^i,
$$
Clearly, $\pi^i_{\cpx{X}}:W_{\cpx{X}}^i\ra C_{\cpx{X}}^i$ is surjective.
Claim that 
$\lambda^i_{\cpx{X}}$ is injective. Indeed,
if $(0,x)=0$, then
$(0,x)=((a)\eta_{\cpx{X}}^i,-(a)q_{\overline{{\bf j}}(\cpx{X})}^i)$ for some $a$. The injectivity
of $\eta_{\cpx{X}}^i$ implies $a=0$, and $x=0$.

If $\pi_{\cpx{X}}^i(j)=0$, write $j=(a)\eta_{\cpx{X}}^i$, then
$(j,x)=(0,x+(a)q_{\overline{{\bf j}}(\cpx{X})}^i)$,
This implies that $\Img(\lambda^i_{\cpx{X}})=\Ker(\pi^i_{\cpx{X}})$ for all $i\in\IZ$.
Thus 
$$0\lra
\overline{{\bf j}}(\cpx{X})\lra
\cpx{W}\lra
\cpx{C}_{X^\bullet}\lra
0$$ is a short exact sequence of
$B$-complexes.
Every $X^i$ belongs to $\mathscr{E}$ by assumption, and every
$C_{\cpx{X}}^i$ belongs to $\mathscr{E}$ by
Lemma~\ref{lem:projective-unit-final}.  The degreewise exact sequence
$ 0\ra X^i\longrightarrow W_{\cpx{X}}^i\ra C_{\cpx{X}}^i\ra0
$
and the extension closure of $\mathscr{E}$ imply $W_{\cpx{X}}^i\in\mathscr{E}$.  Hence the lower row is a
degreewise conflation in the exact category $\C{\mathscr{E}}$.

(2)
By Lemma \ref{lem:conflation-triangle-final} and (1), there exists a triangle
$\cpx{X}\ra \cpx{W}_{\cpx{X}}\ra \cpx{C}_{\cpx{X}}\xrightarrow{\delta_{\cpx{X}}[1]} \cpx{X}[1]$ in $\D{\mathscr{E}}$.
Let $\cpx{f}:\cpx{X}\ra \cpx{Y}$ be a chain map. By the functoriality of the functors 
$\overline{{\bf j}},{\bf H},{\bf R}$ and $_p(-)$, there exists a commutative diagram

$$\xymatrix@C1.5cm{
0\ar@{>}"1,2"^(0.45){}
&\overline{{\bf j}}(\cpx{X})\ar@{>}"1,3"^(0.45){}\ar@{.>}@/_3.8pc/"4,2"_(0.5){\overline{{\bf j}}(\cpx{f})}
&\cpx{W}_{\cpx{X}}\ar@{>}"1,4"^(0.45){}\ar@{.>}@/^3.8pc/"4,3"^(0.5){\zeta(\cpx{f})}
&\cpx{C}_{\cpx{X}}\ar@{>}"1,5"^(0.45){}
&0\\
0\ar@{>}"2,2"^(0.45){}
&_p(\overline{{\bf j}}(\cpx{X}))\ar@{>}"2,3"^(0.43){}\ar@{>}"1,2"_(0.45){q_{\overline{{\bf j}}(\cpx{X})}}\ar@{.>}"3,2"^(0.45){_p(\overline{{\bf j}}(\cpx{f}))}
&\cpx{J}_{X^\bullet}\ar@{>}"2,4"^(0.45){}\ar@{>}"1,3"_(0.45){}\ar@{.>}"3,3"^(0.45){\Psi_p(\overline{{\bf j}}(\cpx{f}))}
&\cpx{C}_{\cpx{X}}\ar@{>}"2,5"^(0.45){}\ar@{=}"1,4"_(0.45){}
&0\\
0\ar@{>}"3,2"^(0.45){}
&_p(\overline{{\bf j}}(\cpx{Y}))\ar@{>}"3,3"^(0.43){}\ar@{>}"4,2"_(0.45){q_{\overline{{\bf j}}(\cpx{Y})}}
&\cpx{J}_{Y^\bullet}\ar@{>}"3,4"^(0.45){}\ar@{>}"4,3"_(0.45){}
&\cpx{C}_{\cpx{Y}}\ar@{>}"3,5"^(0.45){}\ar@{=}"4,4"_(0.45){}
&0\\
0\ar@{>}"4,2"^(0.45){}&
\overline{{\bf j}}(\cpx{Y})\ar@{>}"4,3"^(0.45){}
&\cpx{W}_{\cpx{Y}}\ar@{>}"4,4"^(0.45){}
&\cpx{C}_{\cpx{Y}}\ar@{>}"4,5"^(0.45){}
&0
}$$
The universal property of the pushout implies that there is a
unique chain map $\zeta(\cpx{f}):\cpx{W}_{\cpx{X}}\to \cpx{W}_{\cpx{Y}}$ such that the whole diagram is commutative in $\C{B}$.
Thus there is a morphism of conflations 
$$
\xymatrix{0\ar@{>}"1,2"^(0.45){}
&\overline{{\bf j}}(\cpx{X})\ar@{>}"1,3"^(0.45){}\ar@{>}"2,2"^(0.45){\overline{{\bf j}}(\cpx{f})}
&\cpx{W}_{\cpx{X}}\ar@{>}"1,4"^(0.48){\delta_{\cpx{X}}[1]}\ar@{>}"2,3"^(0.45){\zeta(\cpx{f})}
&\cpx{C}_{\cpx{X}}\ar@{>}"1,5"^(0.45){}\ar@{>}"2,4"^(0.45){\rho(\cpx{f})}
&0\\
0\ar@{>}"2,2"^(0.45){}
&\overline{{\bf j}}(\cpx{Y})\ar@{>}"2,3"^(0.45){}
&\cpx{W}_{\cpx{Y}}\ar@{>}"2,4"^(0.48){\delta_{\cpx{Y}}[1]}
&\cpx{C}_{\cpx{Y}}\ar@{>}"2,5"^(0.45){}
&0
}
$$
If $\cpx{f}:\cpx{X}\ra \cpx{Y}$ is a quasi-isomorphism in $\C{k}$, then $\overline{{\bf j}}(\cpx{f}), \zeta(\cpx{f})$ and $\rho(\cpx{f})$ are quasi-isomorphisms. This implies that $\delta_{\cpx{X}}\simeq \delta_{\cpx{Y}}$.
This  means that 
$\delta:\Phi(\overline{{\bf j}}(-))[-1]\ra Id_{\D{\mathscr{E}}}$ is a natural transformation
on $\D{\mathscr{E}}$. Moreover,
there exists a commutative diagram in $\D{B}$ due to (1), 
$$\xymatrix{
\cpx{C}_{\cpx{X}}[-1]\ar@{>}"1,2"^(0.55){\partial_{\cpx{Q}_{\cpx{X}}}}\ar@{=}"2,1"^(0.45){}
&
\cpx{Q}_{\cpx{X}}\ar@{>}"1,3"^(0.45){}\ar@{>}"2,2"^(0.45){q_{\overline{{\bf j}}(\cpx{X})}}
&
\cpx{J}_{\cpx{X}}\ar@{>}"1,4"^(0.45){}\ar@{>}"2,3"^(0.45){}
&
\cpx{C}_{X^\bullet}\ar@{=}"2,4"^(0.45){}\\
\cpx{C}_{X^\bullet}[-1]\ar@{>}"2,2"^(0.55){\overline{{\bf j}}(\delta_{\cpx{X}})}
&
\overline{{\bf j}}(X^\bullet)\ar@{>}"2,3"^(0.45){}
&
\overline{{\bf j}}(\cpx{W}_{X^\bullet})\ar@{>}"2,4"^(0.45){}
&
\cpx{C}_{X^\bullet} .
}
$$
This finishes the proof.
\overpr

The following proposition establishes that the functor $\overline{{\bf j}}$ is an equivalence, a fact that is crucial for proving our main theorem. 

\begin{Prop}
\label{prop:ordinary-final}
The exact inclusion $\mathscr{E}\hookrightarrow B\Modcat$ induces a triangle
equivalence
$\overline{{\bf j}}:\D{\mathscr{E}}\xrightarrow{\simeq}\mathscr{Y}_B$,
whose quasi-inverse is the restriction of 
$\overline{{\bf K}}:=\Phi[-1]$ to $\mathscr{Y}_B$.
\end{Prop}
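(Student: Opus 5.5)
The plan is to show three things in turn: $\overline{{\bf j}}$ lands in $\mathscr{Y}_B$, it is essentially surjective onto $\mathscr{Y}_B$, and it is fully faithful. The last step is organised around the functor $\overline{{\bf K}}=\Phi[-1]$ and the two natural comparisons already constructed.

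First, by Lemma~\ref{lem:strict-acyclicity-final}(1) every complex with terms in $\mathscr{E}$ is sent by $\overline{{\bf j}}$ into $\mathscr{Y}_B$. Hence $\overline{{\bf j}}$ corestricts to a triangle functor $\D{\mathscr{E}}\to\mathscr{Y}_B$.

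Second, I would show that $\overline{{\bf j}}\,\overline{{\bf K}}\simeq \mathrm{Id}$ on $\mathscr{Y}_B$. For $\cpx{X}\in\mathscr{Y}_B$, Lemma~\ref{lem:Phi-final} gives a triangle $\overline{{\bf j}}\Phi(\cpx{X})[-1]\to\cpx{X}\to{\bf HR}(\cpx{X})\to$. Since ${\bf R}(\cpx{X})=0$, the first map is an isomorphism. It is natural in $\cpx{X}$ because it is the connecting map $\partial_Q$ of the natural triangle \eqref{eq:assembly-triangle-final} of Lemma~\ref{lem:unbounded-assembly-final}, and that triangle is functorial in the homotopically projective resolution $Q^\bullet$. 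This yields a natural isomorphism $\overline{{\bf j}}\,\overline{{\bf K}}|_{\mathscr{Y}_B}\xrightarrow{\simeq}\mathrm{Id}_{\mathscr{Y}_B}$. In particular $\overline{{\bf j}}$ is essentially surjective onto $\mathscr{Y}_B$.

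Third, I would show that $\overline{{\bf K}}\,\overline{{\bf j}}\simeq\mathrm{Id}_{\D{\mathscr{E}}}$, using the natural transformation $\delta:\Phi(\overline{{\bf j}}(-))[-1]\to\mathrm{Id}_{\D{\mathscr{E}}}$ of Lemma~\ref{lem:pushout-counit-final}(2). Applying $\overline{{\bf j}}$ to $\delta_{\cpx{X}}$ gives, by the commutative square there, the composite $q_{\overline{{\bf j}}(\cpx{X})}\circ\partial_{\overline{{\bf j}}(\cpx{X})}$. Here $q$ is a quasi-isomorphism, and $\partial$ is an isomorphism because $\overline{{\bf j}}(\cpx{X})\in\mathscr{Y}_B$ (second step). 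Hence $\overline{{\bf j}}(\delta_{\cpx{X}})$ is an isomorphism in $\D{B}$.

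The cone of $\delta_{\cpx{X}}$ in $\D{\mathscr{E}}$ is therefore killed by $\overline{{\bf j}}$. By Lemma~\ref{lem:strict-acyclicity-final}(3), $\overline{{\bf j}}$ is conservative: an object of $\D{\mathscr{E}}$ sent to zero is strictly exact, hence zero. So the cone vanishes and $\delta$ is a natural isomorphism.

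Combining the two natural isomorphisms shows that $\overline{{\bf j}}$ and $\overline{{\bf K}}|_{\mathscr{Y}_B}$ are mutually quasi-inverse triangle equivalences. Exactness of $\overline{{\bf K}}$ is inherited from $\overline{{\bf j}}$ being a triangle equivalence, so it need not be checked separately.

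The main obstacle I expect is the third step. I would need to confirm that the cone of a morphism in $\D{\mathscr{E}}$ can be represented by a complex in $\K{\mathscr{E}}$ whose image under $\overline{{\bf j}}$ is the cone in $\D{B}$. This holds because $\overline{{\bf j}}$ is a triangle functor induced from $\K{\mathscr{E}}\to\K{B}$ and mapping cones are computed termwise, with $\mathscr{E}$ additive. I would also need the naturality claim in Lemma~\ref{lem:pushout-counit-final}(2), namely that $\delta$ is well defined on $\D{\mathscr{E}}$ and not just on $\C{\mathscr{E}}$. If that naturality proves delicate, the fallback is to avoid $\delta$ altogether and prove full faithfulness directly. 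Since $\overline{{\bf j}}$ is a conservative triangle functor with essential image closed under the relevant cones, it suffices to show that $\overline{{\bf K}}$ is a one-sided inverse that is itself conservative: $\overline{{\bf K}}(\cpx{Y})=0$ forces $\cpx{Y}\simeq\overline{{\bf j}}\,\overline{{\bf K}}(\cpx{Y})=0$.
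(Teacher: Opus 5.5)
Your argument is correct and is essentially the paper's proof. In both, $\overline{{\bf j}}$ lands in $\mathscr{Y}_B$ by Lemma~\ref{lem:strict-acyclicity-final}(1), and Lemma~\ref{lem:Phi-final} gives $\overline{{\bf j}}\,\overline{{\bf K}}\simeq\mathrm{Id}$ on $\mathscr{Y}_B$. Then $\overline{{\bf j}}(\delta_{\cpx{X}})=q_{\overline{{\bf j}}(\cpx{X})}\partial_{\overline{{\bf j}}(\cpx{X})}$ is invertible, so $\delta_{\cpx{X}}$ is invertible because $\overline{{\bf j}}$ kills no nonzero object. The only cosmetic difference is that the paper uses the explicit cone $\cpx{W}_{\cpx{X}}$ of Lemma~\ref{lem:pushout-counit-final} with Lemma~\ref{lem:strict-acyclicity-final}(2), where you use an abstract cone and part (3).

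Your fallback, however, would not work. Conservativity of $\overline{{\bf K}}$ is automatic from $\overline{{\bf j}}\,\overline{{\bf K}}\simeq\mathrm{Id}$, and a conservative triangle functor with a section need not be an equivalence. For example, restriction $\D{k[\epsilon]}\to\D{k}$ along $k\to k[\epsilon]$ has the section given by restriction along $k[\epsilon]\to k$, yet it is not an equivalence. So a comparison map such as $\delta$ is genuinely needed.
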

{\it Proof}.
Let $\cpx{Y}\in\mathscr{Y}_B\subseteq \D{B}$. By Lemma \ref{lem:Phi-final}, there exists a triangle in $\D{B}$
$$\overline{{\bf j}}\overline{{\bf K}}(\cpx{Y})\lraf{\overline{\varepsilon}_{\cpx{Y}}}\cpx{Y}
    \longrightarrow{\bf HR}(\cpx{Y})\longrightarrow\overline{{\bf j}}\overline{{\bf K}}(\cpx{Y})[1].
$$
$\cpx{Y}\in\mathscr{Y}_B$ implies that ${\bf R}(\cpx{Y})=0$, and then
$\overline{\varepsilon}_{\cpx{Y}}:\overline{{\bf j}}\Phi(\cpx{Y})[-1]\lraf{\simeq}\cpx{Y}$.
Therefore  
$\overline{\varepsilon}:\overline{{\bf j}} \overline{{\bf K}}
\ra\operatorname{Id}_{\mathscr{Y}_B}$
is a natural isomorphism.

Let $\cpx{X}\in \D{\mathscr{E}}$. It follows from Lemma~\ref{lem:strict-acyclicity-final} (1) that 
${\bf R}(\cpx{X})=0$. Hence $\overline{{\bf j}}(\D{\mathscr{E}})\subseteq \mathscr{Y}_B$.
Thus it makes sense to regard $\overline{{\bf j}}$ as a triangle functor
$\D{\mathscr{E}}\ra\mathscr{Y}_B$.
Denoted by $_p(\overline{{\bf j}}(\cpx{X}))$ the homotopically projective resolution of $\overline{{\bf j}}(\cpx{X})$.
Thus there exists an isomorphism $q_{\overline{{\bf j}}(\cpx{X})}:_p(\overline{{\bf j}}(\cpx{X}))\ra \overline{{\bf j}}(\cpx{X})$ in $\D{B}$. Therefore ${\bf HR}(_p(\overline{{\bf j}}(\cpx{X})))\lraf{\simeq}{\bf HR}(\overline{{\bf j}}(\cpx{X}))=0$ in $\D{B}$.
By the triangle 
$$ C(_p(\overline{{\bf j}}(\cpx{X})))[-1]\xrightarrow{\partial_{_p(\overline{{\bf j}}(\cpx{X}))}}_p(\overline{{\bf j}}(\cpx{X}))
   \lra{\bf HR}(_p(\overline{{\bf j}}(\cpx{X})))\lra C(_p(\overline{{\bf j}}(\cpx{X}))),$$
the homomorphism $\partial_{_p(\overline{{\bf j}}(\cpx{X}))}$ is an isomorphism in
$\D{B}$.  
The morphism $q_{\overline{{\bf j}}(X)}$ is an isomorphism as well.  By
Lemma \ref{lem:pushout-counit-final}, the morphism
$\overline{{\bf j}}(\delta_{\cpx{X}}) =q_{\overline{{\bf j}}(X)}\partial_{_p(\overline{{\bf j}}(\cpx{X}))}
$
is an isomorphism in $\D{B}$. 
Notice that there is an other triangle in Lemma \ref{lem:pushout-counit-final}
$$C(_p(\overline{{\bf j}}(\cpx{X})))[-1]\lraf{\delta_{\cpx{X}}}\cpx{X}
  \longrightarrow \cpx{W}_{\cpx{X}}\longrightarrow C(_p(\overline{{\bf j}}(\cpx{X}))).$$
Applying the functor $\overline{{\bf j}}$  yields a triangle in $\D{B}$. Since $\overline{{\bf j}}(\delta_{\cpx{X}})$
is invertible, $0=\overline{{\bf j}}(\cpx{W}_{\cpx{X}})=\cpx{W}_{\cpx{X}}$ in $\D{B}$. Thus $\cpx{W}_{\cpx{X}}\in\K{\mathscr{E}}$ is exact in $\C{B}$. By Lemma \ref{lem:strict-acyclicity-final} (2), the complex
$\cpx{W}_{\cpx{X}}$ is a strictly exact complex, i.e., $\cpx{W}_{\cpx{X}}\in\Kac{\mathscr{E}}$.
Thus $\cpx{W}_{\cpx{X}}=0$ in $\D{\mathscr{E}}$. This implies that $\delta_{\cpx{X}}$ is an isomorphism in $\D{\mathscr{E}}$.  It follows from Lemma \ref{lem:pushout-counit-final} (2) that $\delta:{\bf K}\overline{{\bf j }}=\Phi(\overline{{\bf j}}(-))[-1]\ra Id_{\D{\mathscr{E}}}$ is a natural isomorphism. So the functor $\overline{{\bf j}}:\D{\mathscr{E}}\ra \mathscr{Y}_B$ is a triangle equivalence with the quasi-inverse $\overline{{\bf K}}$.
\overpr

\begin{Theo}\label{thm:ordinary-final}\rm
Let $A$ be an associative ring with unit, ${\bf T}$ an $n$-term big tilting complex over $A$ and $B$ the endomorphism ring of ${\bf T}$ in the derived category $\D{A}$ of $A$. Then there is an exact subcategory $\mathscr{E}$ of $B\Modcat$ such that $\D{B}$ is a recollement of $\D{\mathscr{E}}$ and $\D{A}$:
$$
\xymatrix{\D{\mathscr{E}}\ar[r]^-{D(\lambda_*)}&\D{B}\ar[r]\ar@/^1.4pc/[l]\ar@/_1.4pc/[l]
\mathcal{}&\D{A}\ar@/^1.4pc/[l]\ar@/_1.4pc/[l]}
$$

\smallskip
\noindent where $\lambda: \mathscr{E}\to B\Modcat$ is the inclusion of exact categories
and $D(\lambda_*)$ stands for the restriction functor induced by $\lambda$.
\end{Theo}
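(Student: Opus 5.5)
The plan is to assemble the theorem from the recollement already constructed in Lemma~\ref{Rec equivalence} and the exact-realisation equivalence of Proposition~\ref{prop:ordinary-final}.

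\emph{Step 1: the lower recollement.} Lemma~\ref{Rec equivalence} gives a recollement of $\D{B}$ with left term $\mathscr{Y}_B=\Ker({\bf R})$, middle term $\D{B}$, and right term $\Tria_{\D{B}}(Fj_!(A))$. Its functors are ${\bf j}$, ${\bf L}$, ${\bf K}$ on the left and ${\bf R}\simeq\cpx{T}\otimesL_B-$, ${\bf H}\simeq\rHom_A(\cpx{T},-)$, ${\bf i}$ on the right. It is obtained from the recollement of Lemma~\ref{Recollement} by transporting along the equivalence $F$ of Lemma~\ref{inverse of F}.

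\emph{Step 2: identify the right-hand term with $\D{A}$.} The functor $Fj_!:\D{A}\to\D{B}$ is fully faithful, being a composite of the fully faithful $j_!$ and the equivalence $F$. It preserves coproducts as a left adjoint. It sends $A$ to $Fj_!(A)$, and $\D{A}=\Tria(A)$. Hence its essential image is exactly $\Tria_{\D{B}}(Fj_!(A))$, and $Fj_!$ is a triangle equivalence onto it. Replacing the right-hand term through $Fj_!$, and composing the adjoints accordingly, yields a recollement with $\D{A}$ on the right. In it the middle-to-right functor is ${\bf R}$ and its right adjoint is ${\bf H}$.

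\emph{Step 3: identify the left-hand term with $\D{\mathscr{E}}$.} Proposition~\ref{prop:ordinary-final} shows that $\overline{{\bf j}}=\D{\lambda_*}:\D{\mathscr{E}}\to\mathscr{Y}_B$ is a triangle equivalence, with quasi-inverse $\overline{{\bf K}}=\Phi[-1]$ restricted to $\mathscr{Y}_B$. Composing, ${\bf j}\,\overline{{\bf j}}$ is precisely $\D{\lambda_*}$ regarded as a functor into $\D{B}$. Its left and right adjoints are $\overline{{\bf K}}{\bf L}$ and $\overline{{\bf K}}{\bf K}$, since adjunctions are preserved under composition with an equivalence and its quasi-inverse.

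\emph{Step 4: check the axioms.} The recollement axioms are invariant under replacing the outer terms by equivalent triangulated categories. Full faithfulness, the vanishing conditions, and the two canonical triangles are all transported unchanged. Steps 1--3 together therefore give the displayed recollement with $\D{\lambda_*}$ as the left-to-middle functor.

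The main obstacle is not in this assembly, which is formal. The substantive input is the exact-realisation equivalence of Proposition~\ref{prop:ordinary-final}, which is taken as established. Within this argument, the step needing care is Step 2. There one must verify that $Fj_!$ commutes with coproducts and has essential image exactly $\Tria(Fj_!(A))$, rather than merely landing in a triangulated subcategory containing it. I would check this via the standard fact that the localizing subcategory generated by an object is the smallest triangulated subcategory closed under coproducts containing it, applied to the essential image of a coproduct-preserving fully faithful triangle functor.
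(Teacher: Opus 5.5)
Your proposal is correct and matches the paper's proof, which simply combines the recollement of Lemma~\ref{Rec equivalence} with the equivalence $\overline{{\bf j}}:\D{\mathscr{E}}\xrightarrow{\simeq}\mathscr{Y}_B$ of Proposition~\ref{prop:ordinary-final}; your Steps 2--4 spell out the formal transport that the paper leaves implicit. Your Step 2 is already recorded in Lemma~\ref{Rec equivalence} as the right vertical equivalence $Fj_!$, and you could shorten it by transporting the recollement of Lemma~\ref{Recollement} along $F$ directly, which keeps $\D{A}$ on the right from the start.
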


{\it Proof}. By Lemma \ref{Rec equivalence} and $\D{\mathscr{E}}\simeq \mathscr{Y}_B$, we get the recollement required.
\overpr

\begin{Theo}
\label{thm:chen-xi-criterion}
Let $(\D{B}^{\leq 0},\D{B}^{\geq 0})$ be the standard $t$-structure on $\D{B}$.
The following conditions are equivalent.

(1) $\mathscr{E}$ is an abelian subcategory of $B\Modcat$.

(2) $\bigl(\mathscr{Y}_B\cap\D{B}^{\leq0}, \mathscr{Y}_B\cap\D{B}^{\geq0}\bigr)$ is a $t$-structure on $\mathscr{Y}_B$.

(3) There is a homological ring epimorphism
$\lambda:B\to C$ such that the derived restriction functor $\D{\lambda_*}:\D{C}\ra\mathscr{Y}_B$ is an equivalence.
\smallskip

 When these conditions hold, let $\lambda_*:C\Modcat\to B\Modcat$ be the restriction functor induced by $\lambda$, then
$\mathscr{E}=\Img(\lambda_*)$.
\end{Theo}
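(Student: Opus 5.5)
I will prove the cycle (1)$\Rightarrow$(2)$\Rightarrow$(3)$\Rightarrow$(1), using the exact-realisation equivalence $\overline{{\bf j}}:\D{\mathscr E}\simeq\mathscr Y_B$ of Proposition~\ref{prop:ordinary-final} and the TTF-triple $(\Tria_{\D{B}}(Fj_!(A)),\mathscr Y_B,\Img({\bf H}))$ from Lemma~\ref{Rec equivalence}. Since $\mathscr Y_B$ is closed under shifts, products and coproducts, the inclusion ${\bf j}$ has both adjoints ${\bf L}$ and ${\bf K}$.

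\emph{(1)$\Rightarrow$(2).} Let $Y\in\mathscr Y_B$. I claim every cohomology module $H^i(Y)$ lies in $\mathscr E$; granting this, the truncations $\tau_{\le0}Y$ and $\tau_{\ge1}Y$ lie in $\mathscr Y_B$, and the standard $t$-structure restricts. To prove the claim, use Proposition~\ref{prop:ordinary-final} to write $Y\simeq\overline{{\bf j}}(X^\bullet)$ with $X^\bullet\in\C{\mathscr E}$. If $\mathscr E$ is abelian and closed under kernels and cokernels in $B\Modcat$, then cycles, boundaries and cohomology of $X^\bullet$ are computed in $B\Modcat$ and all lie in $\mathscr E$, so $H^i(Y)=H^i(X^\bullet)\in\mathscr E$. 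One point must be checked: "abelian subcategory" should mean exact abelian, that is, kernels and cokernels agree with those in $B\Modcat$. I would verify this using the $m$-symmetric property of Theorem~\ref{thm:sym-new}, or otherwise take it as the intended definition.

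\emph{(2)$\Rightarrow$(3).} The restricted $t$-structure has heart $\mathscr Y_B\cap B\Modcat=\mathscr E$. Put $C:=\End_{\D{B}}({\bf L}(B))$, where ${\bf L}(B)$ is the image of $B$ under the left adjoint of ${\bf j}$. This object is a compact generator of $\mathscr Y_B$: it is compact because ${\bf j}$ preserves coproducts, and it generates because $\Hom({\bf L}B,Y[i])=H^i(Y)$. By (2), ${\bf L}(B)=\tau_{\ge0}{\bf L}(B)$, which follows from the adjunction and the fact that ${\bf L}$ is right $t$-exact. Its negative self-extensions vanish because it lies in the aisle $\mathscr{Y}_B^{\leq0}$, and its positive self-extensions are $H^i({\bf L}B)$ for $i>0$. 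These vanish provided ${\bf L}(B)$ lies in the heart. I would show that $H^i({\bf L}B)=0$ for $i<0$ by noting that $\Hom({\bf L}B,Y)=H^0(Y)$ is left exact on the heart, hence $H^0({\bf L}B)$ represents the identity on $\mathscr E$. This is the step I expect to be the main obstacle: showing that the reflection of $B$ is a stalk complex. The likely route is the standard Nicolás–Saorín / Chen–Xi argument that a TTF-triple whose middle class is $t$-exact comes from a homological ring epimorphism $B\to C=H^0({\bf L}B)$. Once ${\bf L}B\simeq C[0]$, the unit $B\to{\bf L}B$ gives a ring map $\lambda$ with $C\otimesL_BC\simeq C$, so $\lambda$ is a homological epimorphism, and Keller's Morita theorem gives $\D{C}\simeq\mathscr Y_B$ via restriction.

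\emph{(3)$\Rightarrow$(1) and the final assertion.} If $\D{\lambda_*}:\D{C}\simeq\mathscr Y_B$, then the restriction is $t$-exact. Hence $\mathscr E=\mathscr Y_B\cap B\Modcat=\Img(\lambda_*)$: a module $M$ in $\mathscr Y_B$ equals $\lambda_*$ of an object of $\D{C}$ whose cohomology is concentrated in degree $0$, that is, of a $C$-module. Since $\lambda$ is a ring epimorphism, $\lambda_*$ is fully faithful and its image is closed under kernels and cokernels in $B\Modcat$. Therefore $\mathscr E$ is an abelian subcategory, and the identification $\mathscr E=\Img(\lambda_*)$ holds.
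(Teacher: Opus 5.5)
Your (1)$\Rightarrow$(2) and (3)$\Rightarrow$(1), including the final assertion, agree with the paper's argument. Read ``abelian subcategory'' as ``closed under kernels and cokernels in $B\Modcat$'', as the paper does; $m$-symmetry would not give this for $m\geq 2$.

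\textbf{The gap is in (2)$\Rightarrow$(3).} You flag the step of showing $H^i({\bf L}B)=0$ for $i<0$ as the main obstacle. It cannot be obtained from the TTF-triple and $t$-structure formalism alone, because the general ``Nicol\'as--Saor\'in / Chen--Xi'' statement you want to invoke is false.

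Take a set $\Sigma$ of maps $s:P\to Q$ between finitely generated projective $B$-modules. The middle class $\mathscr Y_\Sigma=\{M\mid \rHom_B(\con(s),M)=0 \text{ for all } s\in\Sigma\}$ of the associated TTF triple consists exactly of the complexes whose cohomology modules all lie in $B_\Sigma\Modcat$, because $\Hom_B(P,-)$ and $\Hom_B(Q,-)$ commute with cohomology. So $\mathscr Y_\Sigma$ is stable under standard truncations and has abelian heart $B_\Sigma\Modcat$. Yet $B\to B_\Sigma$ need not be homological, since there are universal localisations that are not stably flat. Thus (2) by itself only forces ${\bf L}B\in\Dle{0}{B}$.

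Your signs are also reversed. Right $t$-exactness of ${\bf L}$ gives $H^i({\bf L}B)=0$ for $i>0$, which are the \emph{positive} self-extensions. What remains are the negative ones, $H^i({\bf L}B)=\Hom({\bf L}B,{\bf L}B[i])$ for $i<0$. Knowing that $H^0({\bf L}B)$ is a projective generator of $\mathscr E$ says nothing about them.

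\textbf{The missing ingredient} is the tilting-specific exact-realisation equivalence $\overline{\bf j}:\D{\mathscr E}\simeq\mathscr Y_B$ of Proposition~\ref{prop:ordinary-final}. You cite it but use it only in (1)$\Rightarrow$(2). The paper's route is as follows.
\begin{enumerate}
\item[(i)] (2)$\Rightarrow$(1): for $f:M\to N$ in $\mathscr E$, apply the restricted truncation to $\con(f)$. This places $\Ker(f)[1]$ and $\Coker(f)$ in $\mathscr Y_B$.
\item[(ii)] (1)$\Rightarrow$(3): $\mathscr E$ is closed under products, coproducts, kernels and cokernels, hence bireflective. So there is a ring epimorphism $\lambda:B\to C$ with $\lambda_*:C\Modcat\simeq\mathscr E$.
\item[(iii)] Composing the induced $\D{C}\simeq\D{\mathscr E}$ with $\overline{\bf j}$ shows that $\D{\lambda_*}$ is fully faithful with essential image $\mathscr Y_B$. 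This forces $\lambda$ to be homological, equivalently ${\bf L}B\simeq C[0]$.
\end{enumerate}
Your plan can be repaired the same way. Your $C=H^0({\bf L}B)$ is exactly this reflection of $B$ into $\mathscr E$, and it is Proposition~\ref{prop:ordinary-final}, not the TTF formalism, that kills the negative cohomology of ${\bf L}B$.
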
  
{\it Proof}.
Write \(\mathscr Y=\mathscr Y_B\).

\((1)\Rightarrow(2)\).
By Proposition~\ref{prop:ordinary-final}, the exact inclusion induces an
equivalence
\[
 \overline{\bf j}:\D{\mathscr E}\lraf{\simeq}\mathscr Y.
\]
Let \(X\in\mathscr Y\), and choose a complex \(E^\bullet\) of objects of
\(\mathscr E\) representing \(\overline{\bf j}^{-1}(X)\).  Since
\(\mathscr E\) is an abelian subcategory of \(B\Modcat\), the kernels and
cokernels occurring in the soft truncations
\[
 \tau_{\le0}E^\bullet,\qquad \tau_{\ge1}E^\bullet
\]
again belong to \(\mathscr E\).  Their images under \(\overline{\bf j}\)
are the standard truncations \(\tau_{\le0}X\) and \(\tau_{\ge1}X\) in
\(\D B\).  Hence both truncations lie in \(\mathscr Y\).  The standard
truncation triangle is therefore a triangle in \(\mathscr Y\);
orthogonality and the shift conditions are inherited from \(\D B\).
This proves (2).

\((2)\Rightarrow(1)\).
Let \(f:M\to N\) be a morphism with \(M,N\in\mathscr E\), and put
\(C^\bullet=\con(f)\).  Since \(\mathscr Y\) is triangulated,
\(C^\bullet\in\mathscr Y\).  Apply the shifted restricted \(t\)-structure
to \(C^\bullet\).  After inclusion into \(\D B\), uniqueness of standard
truncations identifies the resulting triangle with
\[
 \Ker(f)[1]\longrightarrow C^\bullet\longrightarrow
 \Coker(f)\longrightarrow\Ker(f)[2].
\]
Thus \(\Ker(f)[1]\) and \(\Coker(f)\) belong to \(\mathscr Y\), and hence
so does \(\Ker(f)\).  Since both are stalk complexes,
\[
 \Ker(f),\Coker(f)\in\mathscr Y\cap B\Modcat=\mathscr E.
\]
Therefore \(\mathscr E\) is closed under kernels and cokernels and is an
abelian subcategory of \(B\Modcat\).

\((1)\Rightarrow(3)\).
Theorem~\ref{thm:sym-new} shows that \(\mathscr E\) is closed under
arbitrary products and coproducts.  By (1), it is also closed under
kernels and cokernels.  Hence \(\mathscr E\) is bireflective in
\(B\Modcat\).  By \cite[Theorem~1.4]{LM12}, there is a ring epimorphism
$
 \lambda:B\longrightarrow C
$
whose restriction functor identifies \(C\Modcat\) exactly with
\(\mathscr E\).  This is an exact equivalence and therefore induces
$\D C\lraf{\simeq}\D{\mathscr E}$.
Under this equivalence the derived restriction functor is the composite
\[
 \D C\xrightarrow{\simeq}\D{\mathscr E}
 \xrightarrow[\simeq]{\overline{\bf j}}\mathscr Y
 \hookrightarrow\D B.
\]
It is fully faithful with essential image \(\mathscr Y\).  Thus \(\lambda\)
is homological and (3) holds.

\((3)\Rightarrow(1)\).
Restriction of scalars is exact, so
\[
 \D{\lambda_*}:\D C\longrightarrow\D B
\]
is \(t\)-exact for the standard \(t\)-structures.  It also reflects
cohomological vanishing, since \(\lambda_*:C\Modcat\to B\Modcat\) is
faithful.  Consequently the equivalence
\(\D C\xrightarrow{\sim}\mathscr Y\) identifies the standard heart
\(C\Modcat\) with
\[
 \mathscr Y\cap\Dle{0}{B}\cap\Dge{0}{B}
 =\mathscr Y\cap B\Modcat=\mathscr E.
\]
Thus \(\lambda_*:C\Modcat\lraf{\simeq}\mathscr E\) is an exact
equivalence.  In particular, \(\mathscr E\) is an abelian subcategory of $B\Modcat$ and
\(\mathscr E=\Img(\lambda_*)\).
\overpr

\begin{Coro}\label{cor:amplitude-one}
If the perfect right $B$-model $V^\bullet$ can be chosen with
$b-a\leq1$, then $\mathscr{E}$ is abelian and the equivalent conditions of
Theorem~\ref{thm:chen-xi-criterion} hold.
\end{Coro}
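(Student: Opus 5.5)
The plan is to reduce the corollary to condition (1) of Theorem~\ref{thm:chen-xi-criterion}. Concretely, I will show that when $m=b-a\leq 1$ the subcategory $\mathscr E$ is closed under kernels and cokernels in $B\Modcat$. Since $\mathscr E$ is a full additive subcategory, it is then an abelian subcategory, and the theorem gives (2) and (3).

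\emph{Step 1: reduce to the $1$-symmetric condition.} Theorem~\ref{thm:sym-new} says that $\mathscr E$ is $d$-symmetric for every $d\geq m$. Since $m\leq 1$, we may take $d=1$. Unwinding Definition~\ref{symmetric subcat} for $d=1$: for every exact sequence
\[
0\lra X\lra M_1\lraf{f} M_0\lra Y\lra 0
\]
in $B\Modcat$ with $M_0,M_1\in\mathscr E$, both $X$ and $Y$ lie in $\mathscr E$.

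If $m=0$, the argument is even simpler. In that case $\mathbf R(M)$ is concentrated in the single degree $a$ for every module $M$. A short exact sequence therefore gives a short exact sequence of the functor $H^a\mathbf R$, so $H^a\mathbf R$ is exact on $B\Modcat$. Kernels and cokernels of maps in $\mathscr E$ are then annihilated. Alternatively, $0$-symmetric implies $1$-symmetric, so the case $m=0$ is covered by the same argument as $m=1$.

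\emph{Step 2: closure under kernels and cokernels.} Let $f:M_1\to M_0$ be a morphism in $\mathscr E$. Apply Step 1 to the four-term exact sequence
\[
0\to \Ker(f)\to M_1\xrightarrow{f} M_0\to \Coker(f)\to 0.
\]
This gives $\Ker(f),\Coker(f)\in\mathscr E$. Because $\mathscr E$ is full and closed under finite direct sums, and kernels and cokernels computed in $B\Modcat$ stay inside it, $\mathscr E$ is an abelian subcategory with exact inclusion. Hence condition (1) of Theorem~\ref{thm:chen-xi-criterion} holds, and so do (2) and (3).

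\emph{Main obstacle.} The only point needing care is the indexing in Theorem~\ref{thm:sym-new}. For the case $d=m=1$, its proof uses the sequence with $M_1,M_0$, breaks it into short exact pieces, and obtains $\mathbf R(Y)\simeq \mathbf R(X)[2]$. Both $\mathbf R(X)$ and $\mathbf R(Y)$ have cohomology in $[a,a+1]$ by Lemma~\ref{lem:amplitude-new}. The shifted interval $[a-2,a-1]$ is disjoint from $[a,a+1]$, so both objects vanish.

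I would check this shift count explicitly rather than rely on the stated inequality $i+m+1>b$. Breaking the sequence through $I=\Img(f)$, the short exact sequences $0\to X\to M_1\to I\to 0$ and $0\to I\to M_0\to Y\to 0$ give $\mathbf R(I)\simeq \mathbf R(X)[1]$ and $\mathbf R(Y)\simeq \mathbf R(I)[1]$, using $\mathbf R(M_0)=\mathbf R(M_1)=0$. Combining these yields $\mathbf R(Y)\simeq\mathbf R(X)[2]$, as claimed.
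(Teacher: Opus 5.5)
Your proposal is correct and follows the paper's proof: both use Theorem~\ref{thm:sym-new} with $d=1$ and apply $1$-symmetry to $0\to\Ker(f)\to M\to N\to\Coker(f)\to0$, which gives closure under kernels and cokernels, hence condition (1) and therefore all conditions of Theorem~\ref{thm:chen-xi-criterion}. Your explicit recheck of the shift $\mathbf R(Y)\simeq\mathbf R(X)[2]$, and of the disjointness of the cohomological ranges $[a,b]$ and $[a-2,b-2]$ when $b-a\leq 1$, is a worthwhile repair of the garbled inequality in the proof of Theorem~\ref{thm:sym-new}, but it does not change the argument.
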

{\it Proof}. Theorem~\ref{thm:sym-new} makes $\mathscr{E}$ 1-symmetric.  For a morphism
$f:M\to N$ in $\mathscr{E}$, apply 1-symmetricity to exact sequence 
\[
 0\longrightarrow\Ker(f)\longrightarrow M\longrightarrow N
 \longrightarrow\Coker(f)\longrightarrow0.
\]
The definition of symmetric subcategories implies that 
both end terms belong to $\mathscr{E}$, so $\mathscr{E}$ is abelian.
\overpr

\begin{Coro}\label{coro:two-term}\cite{XuTwoTerm}
Let $A$ be an associative ring with unit, ${\bf T}$ a two-term big tilting complex over $A$ and $B$ the endomorphism ring of ${\bf T}$ in the derived category $\D{A}$ of $A$. Then there is  a homological ring epimorphism $\lambda:B\ra C$ 
 such that $\D{B}$ is a recollement of $\D{C}$ and $\D{A}$:
$$
\xymatrix{\D{C}\ar[r]^-{D(\lambda_*)}&\D{B}\ar[r]\ar@/^1.4pc/[l]\ar@/_1.4pc/[l]
\mathcal{}&\D{A}\ar@/^1.4pc/[l]\ar@/_1.4pc/[l]}
$$
\end{Coro}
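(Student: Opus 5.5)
The plan is to reduce the statement to Corollary~\ref{cor:amplitude-one} together with Theorem~\ref{thm:chen-xi-criterion} and Theorem~\ref{thm:ordinary-final}. Since the recollement of Theorem~\ref{thm:ordinary-final} already has $\D{\mathscr E}\simeq\mathscr Y_B$ on the left, it suffices to show that the perfect right $B$-model $\cpx{V}$ of $\cpx{T}_B$ can be chosen with amplitude $b-a\leq 1$. Corollary~\ref{cor:amplitude-one} then makes $\mathscr E$ abelian. Condition (3) of Theorem~\ref{thm:chen-xi-criterion} yields a homological ring epimorphism $\lambda:B\to C$ with $\D{\lambda_*}:\D{C}\xrightarrow{\simeq}\mathscr Y_B$. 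Composing this with the inclusion $\mathscr Y_B\hookrightarrow\D{B}$ gives the left-hand functor $D(\lambda_*)$ of a recollement with the same remaining four functors as in Lemma~\ref{Rec equivalence}. The adjoints of $D(\lambda_*)$ are those of the inclusion $\mathbf j$, transported along the equivalence.

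The key step is the amplitude bound. With $n=2$, we have $\cpx{P}=(P^{-1}\to P^0)$. By Lemma~\ref{Basic properties}(3) and the equivalence $-\otimesL_\Delta B:\D{\Lambda\opp}\to\D{B\opp}$, the object $\cpx{T}_B$ corresponds to $\cpx{P}_\Lambda\simeq\dotHom_A(A,\cpx{P})$. I would compute its position through the triangles $(\ddag)$, which for $n=2$ reduce to the single triangle $A\to\cpx{Q}_1\to\cpx{P}_0\to A[1]$ with $\cpx{Q}_1,\cpx{P}_0\in\add(\cpx{P})$. Applying $\dotHom_A(-,\cpx{P})$ gives a triangle in $\D{\Lambda\opp}$
\[
\dotHom_A(\cpx{P}_0,\cpx{P})\longrightarrow\dotHom_A(\cpx{Q}_1,\cpx{P})\longrightarrow\cpx{P}_\Lambda\longrightarrow\dotHom_A(\cpx{P}_0,\cpx{P})[1].
\]
Its outer terms are finitely generated projective right $\Lambda$-objects in degree $0$, namely summands of $\Lambda^k$, since $\cpx{Q}_1$ and $\cpx{P}_0$ are finite sums of summands of $\cpx{P}$; this is exactly where $\cpx{Q}_r$, $\cpx{P}_s$ come from $V^i\in\add(B)$. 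Transporting this triangle to $\D{B\opp}$ exhibits $\cpx{T}_B$ as the cone of a map $U^1\to U^0$ between finitely generated projective right $B$-modules. That cone is a two-term complex $\cpx{V}$, so $b-a=1$.

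The main obstacle is the bookkeeping in this step. One must check that the objects $\cpx{Q}_1$ and $\cpx{P}_0$ produced in Lemma~\ref{Basic properties}(1) really lie in $\add(\cpx{P})$ and not merely in $\thick(\cpx{P})$; for $n=2$ they are $G_0^{-1}(V^0)$ and $G_0^{-1}(V^1)$ with $V^0,V^1\in\add(B)$, and $G_0^{-1}(B)\simeq\cpx{P}$. One must also check that $\dotHom_A(\add\cpx{P},\cpx{P})$ corresponds to $\add(B_B)$ under $-\otimesL_\Delta B$. If the direct route is awkward, the fallback is a cohomological estimate. By Lemma~\ref{lem:amplitude-new} and the adjunction, $H^i(\cpx{T}\otimesL_B M)$ can vanish outside $[-1,0]$ for all $M$. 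Since $\cpx{T}_B$ is perfect, I would then use a minimal-type truncation argument, as in the proof of Lemma~\ref{Basic properties}(1), applying $\Hom_{\K{B\opp}}(\cpx{V},X[m])=0$ for $X\in\add(B_B)$ and $m\notin[0,1]$, to kill the outer terms of $\cpx{V}$.
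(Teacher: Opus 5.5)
Your proposal is correct, but it takes a different route from the paper.

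\textbf{The paper's route.} The paper works with the left compact object ${\bf i}(A)=Fj_!(A)$, represented by a two-term complex $U^0\xrightarrow{\theta}U^1$ of finitely generated projective left $B$-modules. It forms the universal localisation $\lambda_\theta:B\to B_\theta$. It then shows directly that
\[
\mathscr E=\{M\mid \Hom_B(\theta,M)\ \text{is invertible}\}=\Img((\lambda_\theta)_*),
\]
using that $\mathscr Y_B$ is the right orthogonal of $\cpx{U}$. So $\mathscr E$ is abelian automatically, and Theorem~\ref{thm:chen-xi-criterion} is used only to conclude that $\lambda_\theta$ is homological.

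\textbf{Your route.} You work on the right-module side. You dualise the triangle $A\to\cpx{Q}_1\to\cpx{P}_0\to A[1]$ and transport along $\D{\Lambda\opp}\simeq\D{B\opp}$, which sends $\Lambda\mapsto B$. Your bookkeeping is right: for $n=2$, $\cpx{Q}_1=G_0^{-1}(V^0)$ and $\cpx{P}_0=G_0^{-1}(V^1)$ lie in $\add(\cpx{P})$, because $G_0(\cpx{P})\simeq B$. This shows $\cpx{T}_B$ is a two-term complex of finitely generated projective right $B$-modules in degrees $-1,0$. Corollary~\ref{cor:amplitude-one} then applies, 1-symmetry gives abelianness, and Theorem~\ref{thm:chen-xi-criterion}(1)$\Rightarrow$(3) produces $C$ via bireflectivity.

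\textbf{What each buys.} Your argument shows that the two-term case is literally the amplitude-one instance of the paper's general symmetric-subcategory machinery, with no universal localisation needed. The cost is that $C$ is produced only abstractly. The paper's argument identifies $C\cong B_\theta$, which is the form in which Xu's theorem is stated. One can check that your right complex is, up to these equivalences, the $B$-dual $\dotHom_B(\cpx{U},B)$ of $U^0\xrightarrow{\theta}U^1$. Then $\cpx{V}\otimes_BM\simeq\dotHom_B(\cpx{U},M)$, so the two descriptions of $\mathscr E$ coincide and your route would also recover $C\cong B_\theta$.

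You do not need your fallback, and as phrased it would be circular. Lemma~\ref{lem:amplitude-new} is stated relative to the support $[a,b]$ of the very model whose amplitude you are trying to bound.
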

{\it Proof}. 
Since ${\bf i}$ preserves compact objects,
the object $\cpx{U}={\bf i}(A)$ can be represented by
$U^\bullet=0\ra U^0\xrightarrow{\theta}U^1\ra 0\in \Kb{\pmodcat B}$.
Let $\lambda_\theta:B\rightarrow B_\theta$ be the universal localisation of $B$ at $\theta$.
Then $$B_\theta\Modcat\simeq \Img((\lambda_\theta)_*)=\bigl\{M\in B\Modcat{\big|}\Hom_B(U^1,M)\xrightarrow{\Hom_B(\theta, M)} \Hom_B(U^0,M)\hspace{1mm}\mbox{is an isomorphism}\bigr\}.$$
Since $\cpx{U}\in\Kb{\pmodcat B}$ is a homotopically projective complex, 
$$\rHom_B(\cpx{U},M)=\dotHom_B(\cpx{U},M)=
0\lra \Hom_B(U^1,M)\xrightarrow{\Hom_B(\theta, M)} \Hom_B(U^0,M)\lra 0.$$
Thus  $M\in \Img((\lambda_\theta)_*)$ if and only if $\rHom_B(\cpx{U},M)=0$, that is, $\rHom_B(\cpx{U},M)$ is an exact complex of abelian groups.

If $M\in \Img((\lambda_\theta)_*)$, then $0=H^i(\rHom_B(\cpx{U},M))=H^i(\dotHom_B(\cpx{U},M))\simeq \Hom_{\K{B}}(\cpx{U},M[i])$ for all
$i\in\IZ$. This implies that $M\in \Ker(\Hom_{\D{B}}(\cpx{U},-))=\mathscr{Y}_B$, and then $M\in \mathscr{Y}_B\cap B\Modcat=\mathscr{E}$. So $\Img((\lambda_{\theta})_*)\subseteq \mathscr{E}$. On the other hand, the equivalence $\D{\mathscr{E}}\lraf{\simeq}\mathscr{Y}_B$ implies that $\mathscr{E}\subseteq \Img((\lambda_{\theta})_*)$.
Therefore, $B_\theta\Modcat\xrightarrow{\simeq}\Img((\lambda_{\theta})_*)=\mathscr{E}$.
Thus $\mathscr{E}$ is an abelian subcategory of $B\Modcat$, and $\D{B_\theta}\simeq \D{\mathscr{E}}\simeq \mathscr{Y}_B$. By Theorem \ref{thm:chen-xi-criterion},
the universal localisation 
$\lambda_\theta:B\rightarrow B_\theta$
is homological. 
By Theorem \ref{thm:ordinary-final}, $\D{B}$ is a recollement of $\D{A}$ and $\D{C}$.
\overpr
\smallskip

The preceding results become particularly effective when the
endomorphism ring is semiperfect.  In that case, the amplitude relevant
to the symmetry and abelianity of the kernel can be computed after
reduction modulo the Jacobson radical.

\begin{Theo}
\label{thm:radical-support-criterion}
Let ${\bf T}$ be an \(n\)-term big tilting complex over \(A\), let
$B=\End_{\D{A}}({\bf T})$,
and let
$\cpx{T}\in \D{A\otimes_{\mathbb Z}B^{\opp}}$
be the associated derived \(A\)-\(B\)-bimodule.  Assume that \(B\) is
semiperfect and that \(\cpx T_B\neq0\).  Put
$\overline B=B/J(B)$
and define
\[
    r=
    \min\left\{
       i\in\mathbb Z
       \ \middle|\
       H^i\bigl(
          \cpx{T}\otimesL_B\overline B
       \bigr)\neq0
    \right\},
\hspace{3mm}    s=
    \max\left\{
       i\in\mathbb Z
       \ \middle|\
       H^i\bigl(
          \cpx{T}\otimesL_B\overline B
       \bigr)\neq0
    \right\}.
\]
Then the following statements hold.
\begin{enumerate}
\item[(1)] The optimal right \(B\)-amplitude of $\cpx{T}$ is
$ \operatorname{amp}_B(\cpx{T})=s-r$.

\item[(2)] The exact category
\[
    \mathscr E
       =
    \left\{
       M\in B\Modcat
       \ \middle|\
       \cpx{T}\otimesL_BM=0
    \right\}
\]
is \(d\)-symmetric for every \(d\geq s-r\).

\item[(3)] If \(s-r\leq1\), then \(\mathscr E\) is an abelian subcategory
of \(B\Modcat\).  In this case, there exist a ring \(C\) and a
homological ring epimorphism
$\lambda:B\longrightarrow C$
such that
$\mathscr E =
    \operatorname{Im}
       \bigl(
          \lambda_*:C\Modcat\longrightarrow B\Modcat
       \bigr)
$
and restriction of scalars induces a triangle equivalence
$\D{C} \xrightarrow{\ \simeq\ }\Ker\bigl( \cpx{T}\otimesL_B-\bigr)
$.
Consequently, there is a recollement
$$
\xymatrix{\D{C}\ar[r]&\D{B}\ar[r]\ar@/^1.4pc/[l]\ar@/_1.4pc/[l]
&\D{A}\ar@/^1.4pc/[l]\ar@/_1.4pc/[l]}.
$$
\end{enumerate}
\end{Theo}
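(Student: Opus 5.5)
The plan is to reduce all three statements to results already established: Proposition~\ref{thm:minimal-amplitude} for (1), Theorem~\ref{thm:sym-new} for (2), and Corollary~\ref{cor:amplitude-one} together with Theorem~\ref{thm:chen-xi-criterion} and Lemma~\ref{Rec equivalence} for (3).

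For (1), I will first recall that $\cpx{T}_B$ is perfect in $\D{B^{\opp}}$. This was shown just before Lemma~\ref{lem:amplitude-new}, using the equivalence $-\otimesL_\Delta B$ and the compactness of $\cpx{P}_\Lambda$ from Lemma~\ref{Basic properties}(3). So $\cpx{T}_B$ is represented by some $\cpx{V}\in\Kb{\pmodcat{B^{\opp}}}$. Since $B$ is semiperfect, the Gaussian-elimination argument in the proof of Proposition~\ref{thm:minimal-amplitude} splits off contractible summands from $\cpx{V}$. This produces a minimal representative $\cpx{W}$, unique up to isomorphism of complexes. The same proposition then gives
\[
\operatorname{amp}_B(\cpx{T})=\max\supp(\cpx{W})-\min\supp(\cpx{W}).
\]
It also gives $H^i(\cpx{T}\otimesL_B\overline B)\simeq W^i/W^iJ$, whose nonvanishing detects exactly $W^i\neq0$. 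Hence $\operatorname{amp}_B(\cpx{T})=s-r$. The hypothesis $\cpx{T}_B\neq0$ guarantees that $\cpx{W}\neq0$, so $r$ and $s$ are finite. One point needs care. The tensor $\cpx{T}\otimesL_B\overline B$ in the statement is formed from the bimodule $\cpx{T}$, but only its underlying abelian groups matter, and these agree with $\cpx{W}\otimes_B\overline B$ because $\cpx{W}$ is $K$-projective.

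For (2), I will apply Theorem~\ref{thm:sym-new} with the choice $\cpx{V}:=\cpx{W}$. Then $V^i=0$ for $i\notin[r,s]$, so we may take $a=r$, $b=s$ and $m=s-r$. That theorem was stated for an arbitrary perfect right model $\cpx{V}$ supported in $[a,b]$, and Lemma~\ref{lem:amplitude-new} only uses this support condition. So $\mathscr E$ is $d$-symmetric for every $d\geq s-r$.

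For (3), if $s-r\le1$ then the minimal model $\cpx{W}$ satisfies $b-a\le1$. Corollary~\ref{cor:amplitude-one} then shows that $\mathscr E$ is abelian. Theorem~\ref{thm:chen-xi-criterion}, via (1)$\Rightarrow$(3), supplies a homological ring epimorphism $\lambda:B\to C$ with $\D{C}\simeq\mathscr Y_B=\Ker(\cpx{T}\otimesL_B-)$ and $\mathscr E=\Img(\lambda_*)$. Here $\mathscr Y_B$ is identified with the tensor kernel by Lemma~\ref{Rec equivalence}, since ${\bf R}\simeq\cpx{T}\otimesL_B-$. The recollement is then that of Theorem~\ref{thm:ordinary-final}, with $\D{\mathscr E}$ replaced by $\D{C}$ through this equivalence.

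I expect the main obstacle to be a bookkeeping point in (1): checking that the minimal-complex machinery applies to $\cpx{T}_B$ exactly as Proposition~\ref{thm:minimal-amplitude} assumes. That proposition is phrased with a hypothesis that such a minimal representative exists. My first move will be to note that its proof actually constructs one from any object of $\Kb{\pmodcat{B^{\opp}}}$, so that hypothesis is automatic here.
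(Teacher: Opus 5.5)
Your proposal is correct and follows essentially the same route as the paper. The paper also takes the minimal model $W^\bullet$ and reads off $\supp(W^\bullet)$ from $H^i(\cpx{T}\otimesL_B\overline B)\simeq W^i/W^iJ(B)$ via Nakayama; it then invokes Proposition~\ref{thm:minimal-amplitude} for (1) and Theorem~\ref{thm:sym-new} for (2), and for (3) applies $1$-symmetry to $0\to\Ker(f)\to M\to N\to\Coker(f)\to 0$ (the argument of Corollary~\ref{cor:amplitude-one}) before appealing to Theorem~\ref{thm:chen-xi-criterion}. Your explicit choice $V^\bullet:=W^\bullet$ with $[a,b]=[r,s]$ and your remark that Proposition~\ref{thm:minimal-amplitude} constructs its own minimal representative spell out two points the paper leaves implicit.
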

{\it Proof}.
Let $W^\bullet\in K^b(\Pmodcat{B^{\opp}})$
be the minimal perfect complex representing $\cpx{T}_B$.  Since
\(B\) is semiperfect, \(W^\bullet\) is characterised by
$d_W^i(W^i)\subseteq W^{i+1}J(B)$ for every $i\in\mathbb Z$.
It follows that all differentials of
$ W^\bullet\otimes_B\overline B $
are zero.  Hence
\begin{equation}\label{eq:radical-cohomology-terms}
    H^i\bigl(
       \cpx{T}\otimesL_B\overline B
    \bigr)
       \simeq
    H^i(W^\bullet\otimes_B\overline B)
       \simeq
    W^i/W^iJ(B).
\end{equation}
Since \(W^i\) is finitely generated, Nakayama's lemma gives
\[
    W^i/W^iJ(B)=0
    \quad\Longleftrightarrow\quad
    W^i=0.
\]
Therefore
\[
    \supp(W^\bullet)
       =
    \left\{
       i\in\mathbb Z
       \ \middle|\
       H^i\bigl(
         \cpx{T}\otimesL_B\overline B
       \bigr)\neq0
    \right\}.
\]
By Proposition \ref{thm:minimal-amplitude}, we get that
\[
    \amp_B(\cpx{T})
       =
    \max\supp(W^\bullet)-\min\supp(W^\bullet)
       =
    s-r.
\]
This proves \textup{(1)}.

By the symmetry theorem, \(\mathscr E\) is \(d\)-symmetric for every
integer
$ d\geq\operatorname{amp}_B(\cpx T)=s-r$,
which proves \textup{(2)}.

Suppose that \(s-r\leq1\).  Then \(\mathscr E\) is \(1\)-symmetric.
Let \(f:M\to N\) be a morphism in \(\mathscr E\).  The exact sequence
\[
    0\longrightarrow\Ker(f)
      \longrightarrow M
      \lraf{f}N
      \longrightarrow\Coker(f)
      \longrightarrow0
\]
and $1$-symmetry imply that
$\Ker(f),\Coker(f)\in\mathscr E$.
Thus \(\mathscr E\) is closed under kernels and cokernels and hence is
an abelian subcategory of \(B\Modcat\).
The Theorem \ref{thm:chen-xi-criterion}
 provides a homological ring
epimorphism
$ \lambda:B\longrightarrow C$
whose restriction functor identifies \(C\Modcat\) with
\(\mathscr E\).  Moreover,
\[
    \D{C}\simeq\D{\mathscr E}
       \simeq
    \Ker\bigl(
       \cpx T\otimesL_B-
    \bigr).
\]
Together with the recollement associated with \(\cpx T\), this
proves \textup{(3)}.
\overpr

\begin{Coro}
\label{cor:finite-radical-test}
Under the assumptions of
Theorem~\ref{thm:radical-support-criterion}, suppose that the graded
\(\overline B\)-module
$H^\ast\bigl(
       \cpx{T}\otimesL_B\overline B
    \bigr)
$
is concentrated in one degree or in two consecutive degrees.  Then
the tensor kernel is induced by a homological ring epimorphism
\(B\to C\), and
\[
\D{C}\simeq\Ker(\cpx{T}\otimesL_B-).
\]
\end{Coro}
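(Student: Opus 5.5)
The corollary follows by translating the hypothesis into the numerical condition $s-r\leq 1$ and then invoking Theorem~\ref{thm:radical-support-criterion}(3). First, I would check that the integers $r$ and $s$ of that theorem are well defined. Since $\cpx{T}_B\neq0$ is perfect and $B$ is semiperfect, Proposition~\ref{thm:minimal-amplitude} gives a nonzero minimal representative $W^\bullet$. As in the proof of the theorem, the identification $H^i(\cpx{T}\otimesL_B\overline B)\simeq W^i/W^iJ(B)$ together with Nakayama's lemma shows that the graded module $H^\ast(\cpx{T}\otimesL_B\overline B)$ is nonzero, and that its support is the finite set $\supp(W^\bullet)$. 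So the minimum $r$ and maximum $s$ exist. (If one prefers to avoid the minimal-complex argument, finiteness of the support is also immediate from $\cpx{T}\otimesL_B\overline B\simeq V^\bullet\otimes_B\overline B$ with $V^\bullet$ bounded; nonvanishing follows from Nakayama applied to the minimal model.)

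Second, I would read off the hypothesis. Saying that $H^\ast(\cpx{T}\otimesL_B\overline B)$ is concentrated in one degree means $r=s$. Saying that it is concentrated in two consecutive degrees means its support lies in $\{t,t+1\}$ for some $t$, hence $s-r\leq1$. In either case $s-r\leq 1$. By Theorem~\ref{thm:radical-support-criterion}(1) this is exactly the statement $\amp_B(\cpx{T})\leq1$.

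Third, I would apply Theorem~\ref{thm:radical-support-criterion}(3) directly. It provides a homological ring epimorphism $\lambda:B\to C$ with $\mathscr E=\Img(\lambda_*)$. It also shows that restriction of scalars induces a triangle equivalence $\D{C}\simeq\Ker(\cpx{T}\otimesL_B-)$. This is precisely the statement that the tensor kernel is induced by a homological ring epimorphism. Alternatively, one can go through Corollary~\ref{cor:amplitude-one}, by choosing the minimal model $W^\bullet$ as the perfect right $B$-model $V^\bullet$ with $b-a=s-r\leq1$, and then apply Theorem~\ref{thm:chen-xi-criterion}(1)$\Rightarrow$(3).

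There is no real obstacle here, since the corollary is a restatement of part (3) of the preceding theorem. The only point needing care is the first step: confirming that the support is nonempty, so that $r$ and $s$ make sense, and making sure that ``two consecutive degrees'' is interpreted as support contained in $\{t,t+1\}$ rather than support of size two with a gap.
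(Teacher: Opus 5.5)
Your proposal is correct and follows the paper's own argument: the hypothesis on the support of $H^\ast(\cpx{T}\otimesL_B\overline B)$ is equivalent to $s-r\leq 1$, and the conclusion is then part~(3) of Theorem~\ref{thm:radical-support-criterion}. Your extra checks (that $r$ and $s$ are well defined because $\cpx{T}_B\neq0$, and reading ``two consecutive degrees'' as support in $\{t,t+1\}$) and your alternative route through Corollary~\ref{cor:amplitude-one} and Theorem~\ref{thm:chen-xi-criterion} are sound, but they only unpack steps already contained in the proof of that theorem.
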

{\it Proof}.
The assumption is equivalent to \(s-r\leq1\), so the assertion follows
from Theorem~\ref{thm:radical-support-criterion}.
\overpr

\begin{Rem}
\label{rem:computing-minimal-amplitude}
The criterion is effective.  Starting from any bounded complex of
finitely generated projective right $B$-modules representing
$\cpx{T}_B$, one successively removes every contractible summand
arising from an invertible component of a differential.  Over a
semiperfect ring this process terminates at the minimal complex
\(W^\bullet\).  Equivalently, one reduces the complex modulo \(J(B)\):
the degrees in which
\[
    H^i\bigl(
      \cpx{T}\otimesL_BB/J(B)
    \bigr)
\]
is nonzero are exactly the nonzero degrees of \(W^\bullet\).  Thus the
existence of the homological ring epimorphism in
Corollary~\ref{cor:finite-radical-test} can be checked by a finite
calculation over the semisimple ring \(B/J(B)\).
\end{Rem}

\section{Examples}
\label{sec:examples}

The examples are organised in four groups.  We begin with an explicit
matrix-ring family and its algebraic Calkin quotient.  We then isolate two
general construction mechanisms, derived transport and idempotent chains.
The third group consists of localisation and non-quiver examples, while the
last gives a direct-product construction in arbitrary length.

\subsection{Matrix-ring complexes and an algebraic Calkin quotient}
\label{subsec:matrix-calkin}
\begin{Ex}
\label{ex:three-term-big-tilting}
We give a uniform family of genuine higher-term, non-compact big tilting
complexes for which the equivalence in
Theorem~\ref{thm:ordinary-final} can be computed explicitly.  Since our
global convention uses left modules, it is convenient to use lower
triangular matrix rings.  The same construction over upper triangular
matrix rings uses right modules, or equivalently passes to the opposite
ring.

Let $R\ne0$ be a commutative ring, let $n\ge2$, and put
\[
 A_n=\operatorname{LT}_n(R)
     =\{(a_{ij})\in M_n(R)\mid a_{ij}=0\text{ for }i<j\}.
\]
Write $e_i=E_{ii}$ and $P_i=A_ne_i$.  For $1\le i<n$, right
multiplication by the matrix unit $E_{i+1,i}$ defines an injective
homomorphism
$u_i:P_{i+1}\longrightarrow P_i$.
Set
\[
 L_i=\operatorname{Coker}(u_i)\quad(1\le i<n),
 \qquad L_n=P_n.
\]
Thus there is an exact sequence
\begin{equation}\label{eq:matrix-Li-resolution}
 0\longrightarrow P_{i+1}\lraf{u_i}P_i
 \longrightarrow L_i\longrightarrow0
 \quad(1\le i<n),
 \qquad L_n=P_n.
\end{equation}
The modules $L_i$ are the diagonal $R$-modules.  Applying
$\Hom_{A_n}(-,L_j)$ to \eqref{eq:matrix-Li-resolution} gives
\begin{equation}\label{eq:matrix-ext-table}
 \Hom_{A_n}(L_i,L_j)\simeq
 \begin{cases}
 R,&i=j.,\\
 0,&i\ne j,
 \end{cases}
 \qquad
 \Ext_{A_n}^1(L_i,L_j)\simeq
 \begin{cases}
 R,&j=i+1.,\\
 0,&\text{otherwise},
 \end{cases}
\end{equation}
and
$\Ext_{A_n}^q(L_i,L_j)=0$ for all $q\ge2$.

Define
\begin{equation}\label{eq:matrix-Kn}
                  K_n^\bullet=\bigoplus_{i=1}^{n}L_i[i-1]
                  \quad\text{in }\D{A_n}.
\end{equation}
Its canonical projective representative is
\begin{equation}\label{eq:matrix-Kn-projective}
 K_n^\bullet\simeq
 \bigoplus_{i=1}^{n-1}
       (P_{i+1}\xrightarrow{u_i}P_i)[i-1]
 \oplus P_n[n-1],
\end{equation}
and is supported in degrees $-(n-1),\ldots,0$.  For all $i,j$ and $r$,
\[
 \Hom_{\D{A_n}}
 \bigl(L_i[i-1],L_j[j-1+r]\bigr)
 \simeq
 \Ext_{A_n}^{\,j-i+r}(L_i,L_j).
\]
The table \eqref{eq:matrix-ext-table} shows that a nonzero group on the
right forces $r=0$.  Hence
$\Hom_{\D{A_n}}(K_n^\bullet,K_n^\bullet[r])=0
$ if $r\ne0$.
Starting with $P_n=L_n$ and using the triangles induced by
\eqref{eq:matrix-Li-resolution}, descending induction gives
$P_i\in\thick(K_n^\bullet)$ for every $i$.  Therefore
$ A_n=\bigoplus_{i=1}^{n}P_i
                    \in\thick_{\D{A_n}}(K_n^\bullet)
$,
so $K_n^\bullet$ is a compact $n$-term tilting complex.

\smallskip
\noindent\emph{The explicit three-term member.}
For $n=3$, formula \eqref{eq:matrix-Kn-projective} becomes
\begin{equation}\label{eq:three-term-example-complex}
 0\longrightarrow P_3\oplus P_3
 \xrightarrow{\left(\begin{smallmatrix}0& u_2\\0&0\end{smallmatrix}\right)}
 P_2\oplus P_2
 \xrightarrow{
 \left(\begin{smallmatrix} u_1\\0\end{smallmatrix}\right)
}
 P_1\longrightarrow0.
\end{equation}
Its cohomology is
\[
 H^{-2}(K_3^\bullet)=L_3,\qquad
 H^{-1}(K_3^\bullet)=L_2,\qquad
 H^0(K_3^\bullet)=L_1.
\]
Thus its three-term amplitude is genuine, not the result of adjoining a
contractible summand.

\smallskip
\noindent\emph{The explicit four-term member.}
For $n=4$, one obtains
\[
 0\longrightarrow P_4\oplus P_4
 \xrightarrow{d^{-3}}P_3\oplus P_3
 \xrightarrow{d^{-2}}P_2\oplus P_2
 \xrightarrow{d^{-1}}P_1\longrightarrow0,
\]
where
\[
 d^{-3}=
 \begin{pmatrix}0&u_3\\ 0&0\end{pmatrix},
 \qquad
 d^{-2}=
 \begin{pmatrix}0& u_2\\0&0\end{pmatrix},
 \qquad
 d^{-1}=\left(\begin{smallmatrix} u_1\\0\end{smallmatrix}\right).
\]
The chosen positions of the nonzero matrix entries make both consecutive
composites zero.  Moreover,
\[
 H^{-3}(K_4^\bullet)=L_4,\quad
 H^{-2}(K_4^\bullet)=L_3,\quad
 H^{-1}(K_4^\bullet)=L_2,\quad
 H^0(K_4^\bullet)=L_1,
\]
so this is a genuine four-term tilting complex.

\smallskip
\noindent\emph{Passage to a non-compact big tilting complex.}
Fix an infinite set $I$ and put
$ \widetilde P_n^\bullet=(K_n^\bullet)^{(I)}
$.
For every set $J$ and every $r\ne0$, compactness of $K_n^\bullet$ gives
\[
 \Hom_{\D{A_n}}(\widetilde P_n^\bullet,
       (\widetilde P_n^\bullet)^{(J)}[r])
 \simeq
 \prod_{\lambda\in I}
 \Hom_{\D{A_n}}(K_n^\bullet,(K_n^\bullet)^{(I\times J)}[r])
 \simeq
 \prod_{\lambda\in I}\;
 \bigoplus_{\mu\in I\times J}
 \Hom_{\D{A_n}}(K_n^\bullet,K_n^\bullet[r])=0.
\]
Since $K_n^\bullet$ is a direct summand of
$\widetilde P_n^\bullet$, one also has
$A_n\in\thick(\widetilde P_n^\bullet)$.  Thus
$\widetilde P_n^\bullet$ is an $n$-term big tilting complex.  It is not
compact: its degree-zero cohomology $L_1^{(I)}$ is not finitely generated,
whereas the degree-zero cohomology of a perfect complex is finitely
generated.  Moreover,
\[
 H^{-(i-1)}(\widetilde P_n^\bullet)
 \simeq L_i^{(I)}\ne0
 \qquad(1\le i\le n).
\]
It therefore cannot be represented, even up to an overall shift, by a
complex supported in fewer than $n$ consecutive degrees.

\smallskip
\noindent\emph{The endomorphism rings.}
Let $Q_n$ be the linearly oriented quiver
\[
                       1\xrightarrow{\alpha_1}2
                       \xrightarrow{\alpha_2}\cdots
                       \xrightarrow{\alpha_{n-1}}n
\]
and let $J_n$ denote the ideal generated by its arrows.  With the
endomorphism-ring convention fixed above, and after reversing $Q_n$ if
the opposite convention is used, \eqref{eq:matrix-ext-table} gives
$ \Gamma_n:=\End_{\D{A_n}}(K_n^\bullet)
               \simeq RQ_n/J_n^2$.
Indeed, the arrows are the adjacent extension classes, while every
composition of two arrows lies in an $\Ext^2$ group and is therefore zero.
Put
$ W_n=\Gamma_n^{(I)}$ and $
 B_n=\End_{\D{A_n}}(\widetilde P_n^\bullet)$.
Under the derived Morita equivalence defined by $K_n^\bullet$, the object
$\widetilde P_n^\bullet$ corresponds to $W_n$, and hence
\begin{equation}\label{eq:matrix-Bn}
                         B_n\simeq\End_{\Gamma_n}(W_n).
\end{equation}

\smallskip
\noindent\emph{The bimodule and the kernel.}
Let
$ X_n\in\D{A_n\otimes\Gamma_n^{\opp}}$
be the two-sided tilting complex whose underlying left $A_n$-object is
$K_n^\bullet$.  The transported $A_n$-$B_n$ bimodule attached to
$\widetilde P_n^\bullet$ is
$ T_n\simeq X_n\otimesL_{\Gamma_n} W_n
$ in $\D{A_n\otimes\textcolor[rgb]{1.00,0.00,0.00}{_{\IZ}} B_n^{\opp}} $.
Choose one copy of $\Gamma_n$ in $W_n$ and let $e_n\in B_n$ be the
corresponding projection idempotent.  Then
$ (W_n)_{B_n}\simeq e_nB_n, e_nB_ne_n\simeq\Gamma_n$.
Consequently, naturally for $M\in\D{B_n}$,
\begin{equation}\label{eq:three-term-example-factorisation}
 T_n\otimesL_{B_n}M
 \simeq
 X_n\otimesL_{\Gamma_n}
       \bigl(W_n\otimesL_{B_n}M\bigr)
 \simeq
 X_n\otimesL_{\Gamma_n}e_nM.
\end{equation}
Equivalently, the tensor functor factors through the idempotent corner in
the commutative diagram
\[
\xymatrix@C=4.8em@R=3.0em{
\D{B_n}\ar[r]^-{e_n(-)}\ar[dr]_-{T_n\otimesL_{B_n}-}
  &\D{\Gamma_n}\ar[d]^-{X_n\otimesL_{\Gamma_n}-}\\
  &\D{A_n}.
}
\]
Since $X_n\otimesL_{\Gamma_n}-$ is an equivalence,
\[
 T_n\otimesL_{B_n}M=0
 \quad\Longleftrightarrow\quad e_nM=0.
\]
Thus the kernel and the exact category of
Theorem~\ref{thm:ordinary-final} are
\[
 \cY_{B_n}=\{M\in\D{B_n}\mid e_nM\simeq0\},
\]
\[
 \cE_n=\{M\in B_n\Modcat\mid e_nM=0\}
       \simeq(B_n/B_ne_nB_n)\Modcat.
\]
The ideal $B_ne_nB_n$ consists exactly of the endomorphisms of the
infinite-rank free $\Gamma_n$-module $W_n$ which factor through a
finitely generated free direct summand.  The identity of $W_n$ does not
have this property, so $B_n/B_ne_nB_n\ne0$.  In particular, the
left-hand term is genuinely nonzero.

Theorem~\ref{thm:ordinary-final} now gives, for every $n\ge2$, the explicit
equivalence
\begin{equation}\label{eq:three-term-example-equivalence}
 \D{B_n/B_ne_nB_n}
 \simeq\D{\cE_n}
 \lraf{\simeq}
 \Ker\!(
 T_n\otimesL_{B_n}-:
 \D{B_n}\lra \D{A_n}.
\end{equation}
For $n=3$ and $n=4$, respectively, this specialises to
\[
 \D{B_3/B_3e_3B_3}\simeq
 \Ker(T_3\otimesL_{B_3}-),
 \qquad
 \D{B_4/B_4e_4B_4}\simeq
 \Ker(T_4\otimesL_{B_4}-).
\]
It is important that $I$ be infinite.  If one uses only the compact seed
$K_n^\bullet$, then $B_n=\Gamma_n$ and
$K_n^\bullet\otimesL_{\Gamma_n}-$ is a derived equivalence.
Its kernel, and hence the left-hand term in the recollement, is zero.
\end{Ex}

We now give a genuine three-term application in which the ring
epimorphism defining the left-hand term can be written explicitly.

Let \(k\) be a field and put
$A=\operatorname{LT}_3(k)$.
Let \(e_i=E_{ii}\), \(P_i=Ae_i\), and let
$u_i:P_{i+1}\longrightarrow P_i$
be right multiplication by \(E_{i+1,i}\), where $i=1,2$.  Set
\[
    L_1=\Coker(u_1),\qquad
    L_2=\Coker(u_2),\qquad
    L_3=P_3.
\]
The object
$K^\bullet=L_1\oplus L_2[1]\oplus L_3[2]$
is represented by the three-term projective complex
\begin{equation}\label{eq:explicit-three-term-K}
    0\longrightarrow
    P_3\oplus P_3
      \xrightarrow{
        \left(\begin{smallmatrix}
          0&u_2\\0 &0
        \end{smallmatrix}\right)}
    P_2\oplus P_2
      \xrightarrow{\left(\begin{smallmatrix}
          u_1\\0
        \end{smallmatrix}\right)}
    P_1
      \longrightarrow0.
\end{equation}
Let
$ \Gamma=\End_{\D{A}}(K^\bullet)$.
With the endomorphism convention used in this paper,
$ \Gamma\simeq kQ_3/J_3^2$,
where
$ Q_3:= 1\longrightarrow2\longrightarrow3$
and \(J_3\) is the arrow ideal.

Let \(I=\mathbb N\), put
$\widetilde P^\bullet=(K^\bullet)^{(I)}$ and $W=\Gamma^{(I)}$,
 define
$B=\End_{\D{A}}(\widetilde P^\bullet)\simeq\End_\Gamma(W)$.
Choose one copy of \(\Gamma\) in \(W\), and let
$e\in B$
be the corresponding projection idempotent.  Finally, set
$\mathfrak F=BeB$,
$ C=B/\mathfrak F$.

\begin{Theo}
\label{thm:three-term-Calkin-recollement}
With the notation above, the following statements hold.
\begin{enumerate}
\item[(1)] The complex \(\widetilde P^\bullet\) is a non-compact, genuinely
three-term big tilting complex over \(A\).

\item[(2)] The ideal \(\mathfrak F=BeB\) consists precisely of the
endomorphisms of \(W=\Gamma^{(I)}\) which factor through a finitely
generated free \(\Gamma\)-module.

\item[(3)] The quotient
$\lambda:B\longrightarrow C=B/BeB$
is a homological ring epimorphism.

\item[(4)] If
$\cpx{T}\in\D{A\otimes_kB^{\opp}}$
is the derived bimodule associated with
\(\widetilde P^\bullet\), then
\[
    \mathscr E
       =
    \left\{
       M\in B\Modcat
       \ \middle|\
       \cpx{T}\otimesL_BM=0
    \right\}
       =
    \{M\in B\Modcat\mid eM=0\}.
\]
Restriction of scalars identifies
$C\Modcat\xrightarrow{\ \simeq\ }\mathscr E$,
and there is a recollement
$$
\xymatrix{\D{C}\ar[r]&\D{B}\ar[r]\ar@/^1.4pc/[l]\ar@/_1.4pc/[l]
&\D{A}\ar@/^1.4pc/[l]\ar@/_1.4pc/[l]}.
$$
\end{enumerate}
\end{Theo}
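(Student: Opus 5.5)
The plan is to deduce everything from Example~\ref{ex:three-term-big-tilting} with $R=k$ and $n=3$, adding two genuinely new ingredients: the description of $BeB$ in (2) and the homological property in (3).

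For (1), specialise the example. The complex $K^\bullet$ is a compact three-term tilting complex: self-orthogonality follows from the Ext table \eqref{eq:matrix-ext-table}, and $A\in\thick(K^\bullet)$ by descending induction. Then $\widetilde P^\bullet=(K^\bullet)^{(\mathbb N)}$ satisfies condition (1) of Definition~\ref{Big tilting} by compactness of $K^\bullet$, and condition (2) because $K^\bullet$ is a summand. It is non-compact because $H^0=L_1^{(\mathbb N)}$ is not finitely generated. It is genuinely three-term because $H^{-2},H^{-1},H^0$ are all nonzero.

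For (2), identify $B=\End_\Gamma(W)$ and let $e=\iota\pi$, where $\iota:\Gamma\to W$ and $\pi:W\to\Gamma$ are the inclusion and projection of the chosen copy. An element $\sum b_iec_i$ factors through $\Gamma^m$ via $(\pi c_i)_i$ and $(b_i\iota)_i$. Conversely, suppose $f=gh$ with $h:W\to\Gamma^m$ and $g:\Gamma^m\to W$. Write $h=\sum_j \iota_j h_j$ and $g=\sum_j g_j\pi_j$, where $\iota_j,\pi_j$ are the coordinate maps of $\Gamma^m$. Each $h_j$ and each $g_j$ factors through $\pi$ and $\iota$ respectively, since $\Gamma\cong$ the chosen copy. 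Hence $g_j\pi_j\iota_j h_j\in BeB$. One must be careful with the composition convention of the paper (maps written on the right), but this is bookkeeping only.

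For (4), use the factorisation \eqref{eq:three-term-example-factorisation}. We have $(W)_B\simeq eB$, so $T\otimesL_BM\simeq X\otimesL_\Gamma eM$, and $X\otimesL_\Gamma-$ is an equivalence. Hence $\mathscr E=\{M\mid eM=0\}$, which is exactly the set of $C$-modules. Thus $\mathscr E$ equals the image of $\lambda_*$, which is closed under kernels and cokernels and therefore abelian. Theorem~\ref{thm:chen-xi-criterion} then applies. Its implication (1)$\Rightarrow$(3) produces a homological epimorphism $B\to C'$ with $C'\Modcat=\mathscr E$. Since $B\to B/BeB$ already has this property and ring epimorphisms are determined by their essential image, we get $C'\cong C$. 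This gives (3) and the recollement via Theorem~\ref{thm:ordinary-final}.

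The expected obstacle is this identification of the bireflective epimorphism, i.e.\ checking that \cite{LM12} and Theorem~\ref{thm:chen-xi-criterion} indeed yield $\lambda$ itself. As a backup, I would prove (3) directly, using the criterion that $B\to B/BeB$ is homological iff $Be\otimesL_{eBe}eB\to BeB$ is an isomorphism. Here $eBe\cong\Gamma$ and $eB\cong\Hom_\Gamma(W,\Gamma)$, and $Be\cong W$ is flat (indeed free) over $\Gamma$. So one needs $W\otimes_\Gamma\Hom_\Gamma(W,\Gamma)\to BeB$ to be injective, which holds since this map is the finite-rank endomorphism identification for free modules.
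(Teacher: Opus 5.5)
Your proposal is correct. For (1), (2) and (4) it follows the paper's proof. In (2), your direct factorisations $h_j=\pi\iota h_j$ and $g_j=g_j\pi\iota$ are cleaner than the paper's appeal to all copies of $\Gamma$ in $W$ being conjugate under automorphisms.

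For (3), the paper does not pass through Theorem~\ref{thm:chen-xi-criterion} or any uniqueness statement for bireflective subcategories. Having shown in (4) that restriction along $\lambda:B\to B/BeB$ itself is an exact equivalence $C\Modcat\simeq\mathscr E$, it observes that $\D{\lambda_*}$ is the composite $\D{C}\simeq\D{\mathscr E}\xrightarrow{\overline{\bf j}}\D{B}$. This composite is fully faithful by Theorem~\ref{main theorem}, so $\lambda$ is homological. The obstacle you anticipated therefore never arises: the argument inside the proof of (1)$\Rightarrow$(3) of Theorem~\ref{thm:chen-xi-criterion} applies verbatim to $\lambda$. Your detour through an auxiliary $C'$ and uniqueness of ring epimorphisms with prescribed image is valid but unnecessary.

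Your backup argument is genuinely different, and arguably preferable. Showing that $BeB$ is a stratifying ideal uses only three facts: $eBe\cong\Gamma$; the corner isomorphic to $W$ is free over $eBe$, which kills the higher Tor; and the natural map $\Hom_\Gamma(W,\Gamma)\otimes_\Gamma\Gamma^{(I)}\to\Hom_\Gamma(W,\Gamma^{(I)})$ is injective. It is independent of the exact-realisation machinery. Combined with the classical recollement $\D{B/BeB}\to\D{B}\to\D{eBe}$ and $\D{eBe}\simeq\D{\Gamma}\simeq\D{A}$, it even yields the recollement in (4) without Theorem~\ref{main theorem}. The paper's route is instead designed to present this example as an instance of its main theorem.
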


{\it Proof}.
The computations
\[
    \Hom_A(L_i,L_j)=0\quad(i\neq j),
\]
\[
    \Ext_A^1(L_i,L_j)=0\quad(j\neq i+1),
    \qquad
    \Ext_A^q(L_i,L_j)=0\quad(q\geq2),
\]
show that \(K^\bullet\) is a compact tilting complex.  Hence, for every
set \(J\) and every \(r\neq0\),
\[\Hom_{\D{A}}
       \bigl(
          \widetilde P^\bullet,
          (\widetilde P^\bullet)^{(J)}[r]
       \bigr)
       \simeq
    \prod_{\alpha\in I}
    \Hom_{\D{A}}
       \bigl(
          K^\bullet,
          (K^\bullet)^{(I\times J)}[r]
       \bigr)                                                    
       \simeq
    \prod_{\alpha\in I}
    \bigoplus_{\beta\in I\times J}
    \Hom_{\D{A}}
       \bigl(K^\bullet,K^\bullet[r]\bigr)
       =0.
\]
Moreover, \(K^\bullet\) is a direct summand of
\(\widetilde P^\bullet\), and
$ A\in\thick_{\D{A}}(K^\bullet)$.
Thus
$ A\in\thick_{\D{A}}(\widetilde P^\bullet)$,
so \(\widetilde P^\bullet\) is a big tilting complex.

Under the equivalence induced by $K^\bullet$, the object
$\widetilde P^\bullet$ corresponds to the infinite-rank free module
$\Gamma^{(I)}$, which is not compact in $\D{\Gamma}$.  Hence
$\widetilde P^\bullet$ is non-compact.  Furthermore,
\[
    H^{-2}(\widetilde P^\bullet)\simeq L_3^{(I)},
    \qquad
    H^{-1}(\widetilde P^\bullet)\simeq L_2^{(I)},
    \qquad
    H^0(\widetilde P^\bullet)\simeq L_1^{(I)},
\]
and all three modules are nonzero.  Hence
\(\widetilde P^\bullet\) cannot be represented, even up to an overall
shift, by a complex supported in fewer than three consecutive
degrees.  This proves \textup{(1)}.

Under the derived equivalence induced by \(K^\bullet\), the object
\(\widetilde P^\bullet\) corresponds to
$W=\Gamma^{(I)}
$.
Consequently,
$ B\simeq\End_\Gamma(W)
$.
The idempotent \(e\) is the composite of the projection onto one copy
of \(\Gamma\) and the corresponding inclusion.  Every element of
\(BeB\) is a finite sum of endomorphisms factoring through that copy
of \(\Gamma\), and hence factors through a finitely generated free
\(\Gamma\)-module.

Conversely, an endomorphism of \(W\) which factors through
\(\Gamma^m\) is a finite sum of endomorphisms factoring through one
copy of \(\Gamma\).  Since all copies of \(\Gamma\) in \(W\) are
conjugate by automorphisms of \(W\), every such endomorphism belongs
to \(BeB\).  This proves \textup{(2)}.

Let
$X\in\D{A\otimes_k\Gamma^{\opp}}$
be the two-sided tilting complex whose underlying left \(A\)-object is
\(K^\bullet\).  The derived \(A\)-\(B\)-bimodule associated with
\(\widetilde P^\bullet\) is
$ \cpx{T}\simeq X\otimesL_\Gamma W$.
As a right \(B\)-module,
$ W_B\simeq eB
$.
Therefore, for every \(M\in\D{B}\),
\begin{equation}\label{eq:Calkin-kernel-calculation}
     \cpx{T}\otimesL_B M
       \simeq
    X\otimesL_\Gamma(W\otimesL_B M)                      
       \simeq
    X\otimesL_\Gamma eM.
\end{equation}
Since $X\otimesL_\Gamma-: \D{\Gamma}\ra\D{A}$
is an equivalence, the isomorphisms \eqref{eq:Calkin-kernel-calculation} give
\[
     \cpx{T}\otimesL_B M=0
    \quad\Longleftrightarrow\quad
    eM=0
\]
in \(\D{B}\).  In particular, for stalk modules,
$\mathscr E=\{M\in B\Modcat\mid eM=0\}$.

For a left \(B\)-module \(M\), the condition \(eM=0\) is equivalent to
$BeB\,M=0$.
It follows that restriction of scalars along
$ \lambda:B\longrightarrow C=B/BeB$
induces an equivalence
$ C\Modcat
       \xrightarrow{\ \simeq\ }
    \mathscr E
$.
By Theorem~\ref{main theorem}, the induced functor
$\D{C}\simeq\D{\mathscr E}\ra\D{B}$
is fully faithful, with essential image
$\Ker( \cpx{T}\otimesL_B-)$.
The derived restriction functor associated with a ring epimorphism is
fully faithful precisely when that ring epimorphism is homological.
Thus \(\lambda\) is homological and the asserted recollement follows.
This proves \textup{(3)} and \textup{(4)}.

Finally, by \textup{(2)}, the ideal \(BeB\) consists of the
endomorphisms of \(W\) which factor through a finitely generated free
\(\Gamma\)-module.  Since \(I=\mathbb N\) and \(\Gamma\) is a nonzero
finite-dimensional \(k\)-algebra, the infinite-rank free module
\(W=\Gamma^{(I)}\) is not a direct summand of a finitely generated free
module.  Hence \(1_W\) does not belong to \(BeB\).  Therefore
$C=B/BeB\neq0$,
which proves \textup{(5)}.
\overpr

\begin{Rem}
The ring
\[
    C=
    \End_\Gamma\bigl(\Gamma^{(\mathbb N)}\bigr)
    \big/
    \{\text{endomorphisms of finite projective rank}\}
\]
is an algebraic analogue of a Calkin algebra.  Thus
Theorem~\ref{thm:three-term-Calkin-recollement} produces a nontrivial
homological epimorphism onto an algebraic Calkin quotient from a
genuinely three-term, non-compact big tilting complex.
\end{Rem}

\subsection{Derived transport and idempotent-chain constructions}
\label{subsec:transport-chain}

\begin{Prop}
\label{prop:derived-transport}
Let $K^\bullet$ be a compact $r$-term tilting complex over a ring $A$, put
$\Gamma=\End_{\D{A}}(K^\bullet)
$,
and let $X\in\D{A\textcolor[rgb]{1.00,0.00,0.00}{\otimes_{\IZ}}\Gamma^{\opp}}$ be a two-sided tilting complex
.
Let $U$ be a non-compact good $s$-tilting $\Gamma$-module and put
$ P_U^\bullet=X\otimesL_\Gamma U, B_U=\End_\Gamma(U)$.

Then (1) $P_U^\bullet$ is a non-compact big tilting complex over $A$.  It has
a projective model with at most $r+s$ nonzero terms,  

(2) if
$T_U\in\D{A\otimes B_U^{\opp}}$ denotes the bimodule obtained by retaining
the natural right $B_U$-action, then
$$
 \Ker(T_U\otimesL_{B_U}-)
 =
 \Ker(U\otimesL_{B_U}-).
$$
Consequently Theorem~\ref{thm:ordinary-final} gives
$\D{\cE_U}\lraf{\simeq} \Ker(T_U\otimesL_{B_U}-)
$,
where
\begin{equation}\label{eq:transport-E}
 \cE_U=
 \{M\in B_U\Modcat\mid
   \Tor^{B_U}_i(U,M)=0\text{ for every }i\ge0\}.
\end{equation}
\end{Prop}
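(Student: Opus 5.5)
Write $G:=X\otimesL_\Gamma-:\D{\Gamma}\to\D{A}$. The approach is to transport everything along the triangle equivalence $G$. By Rickard's theory, $G$ preserves and reflects coproducts, compact objects and thick closures, and it sends $\Gamma$ to $K^\bullet$. Recall that a good $s$-tilting module $U$ satisfies $\pd_\Gamma U\le s$ and $\Ext^i_\Gamma(U,U^{(\alpha)})=0$ for $i>0$. It also has an exact sequence $0\to\Gamma\to U_0\to\cdots\to U_s\to0$ with $U_j\in\add(U)$, so $\Gamma\in\thick_{\D{\Gamma}}(U)$. In other words, $U$ is a big tilting complex over $\Gamma$ in the sense of Definition~\ref{Big tilting}, represented by a deleted projective resolution with $s+1$ terms.

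\textbf{Part (1).} Applying $G$ gives
$$\Hom_{\D{A}}(P_U^\bullet,(P_U^\bullet)^{(\alpha)}[i])\simeq\Hom_{\D{\Gamma}}(U,U^{(\alpha)}[i])=0\qquad(i\neq0),$$
and also $A\simeq G(\Gamma)\in\thick(G(U))=\thick(P_U^\bullet)$. Non-compactness follows because $G$ reflects compactness and $U$ is non-compact. For the term bound I would choose a projective resolution $Q^\bullet$ of $U$ in degrees $[-s,0]$, possibly with infinitely generated terms; to stay within the theorem's bound I would take the finite-length resolution. Then $G(Q^{-j})$ lies in $\Add(K^\bullet)$, which consists of complexes supported in $[-(r-1),0]$ with projective terms. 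Totalising the finite double complex $G(Q^\bullet)$ then produces a projective model supported in $[-(r-1)-s,0]$, which has $r+s$ terms.

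The main obstacle sits exactly here. $G$ is a derived functor, so $G(Q^\bullet)$ is not literally a double complex; it is an iterated cone. I would handle it with the brutal filtration of $Q^\bullet$ and induction on length. Each stage is the cone of a morphism from a complex in $\Add(K^\bullet)[j]$ to a complex built earlier. Since $\Add(K^\bullet)$ consists of homotopically projective complexes, that morphism is realised by a chain map in $\K{A}$, and the mapping cone keeps the support within the stated interval.

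\textbf{Part (2).} The endomorphism ring is $\End_{\D{A}}(P_U^\bullet)\simeq\End_{\D{\Gamma}}(U)=B_U$ via $G$. The natural right $B_U$-action on $U$ is a strict bimodule structure, so $T_U:=X\otimesL_\Gamma U\in\D{A\otimes B_U^{\opp}}$. I would check that this agrees with the derived bimodule $\cpx{T}$ constructed from $\Delta$ in Section~3. Both are derived $A$-$B_U$-bimodules with underlying left $A$-object $P_U^\bullet$, and their induced action maps $B_U\to\End_{\D{A}}(P_U^\bullet)$ coincide. I would argue this agreement through the quasi-isomorphism $\Delta\to B_U$, or else simply take $T_U$ as the retained action, as the statement allows. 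Associativity of derived tensor products then gives
$$T_U\otimesL_{B_U}M\simeq X\otimesL_\Gamma(U\otimesL_{B_U}M),$$
and since $G$ is an equivalence the two kernels coincide.

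\textbf{The final equivalence.} Theorem~\ref{thm:ordinary-final}, or Proposition~\ref{prop:ordinary-final}, applied to $P_U^\bullet$, which is an $(r+s)$-term big tilting complex by part (1), yields $\D{\cE_U}\simeq\Ker(T_U\otimesL_{B_U}-)$. For a stalk module $M$ we have $H^{-i}(U\otimesL_{B_U}M)=\Tor_i^{B_U}(U,M)$, which vanishes for $i<0$. So $M$ lies in the kernel exactly when every $\Tor_i^{B_U}(U,M)$ with $i\ge0$ is zero, which gives \eqref{eq:transport-E}.
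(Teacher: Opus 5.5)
Your proposal is correct and follows the paper's proof closely. Both arguments transport along the equivalence $X\otimesL_\Gamma-$. Both prove the term bound by a brutal-filtration argument on a length-$s$ projective resolution of $U$, with mapping cones realised by chain maps; these are the paper's facts (a) and (b). Both obtain the kernel from derived associativity $T_U\otimesL_{B_U}M\simeq X\otimesL_\Gamma(U\otimesL_{B_U}M)$. One step you assert without argument: that direct summands in $\D{A}$ of $(K^\bullet)^{(I)}$ again have projective models supported in $[-(r-1),0]$. The paper justifies exactly this point with the projective-amplitude criterion \eqref{eq:projective-amplitude-criterion}.
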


{\it Proof}.
Since $X$ is a two-sided tilting complex, there is a triangle equivalence
$ X\otimesL_\Gamma-:\D{\Gamma}\lraf{\simeq}\D{A}$.
Hence $X\otimes_\Gamma^{\mathbf L}-$ preserves coproducts,
nonzero-degree self-orthogonality, and compact objects.  Since $U$ is
good, its finite coresolution of $\Gamma$ by objects of $\add(U)$ gives
$\Gamma\in\thick(U)$.  Hence
$A\in\thick(P_U^\bullet)$, so $P_U^\bullet$ is big tilting.  It is
non-compact because its inverse image $U$ is non-compact.

We now prove the term bound without choosing a strict finite-term
$A$--$\Gamma$ bimodule representative of $X$.  For integers $c\leq d$,
write $\operatorname{PrAmp}_A[c,d]$ for the full subcategory of
$\D{A}$ consisting of the objects represented by complexes of projective
$A$-modules supported in $[c,d]$.
We use the following two elementary projective-amplitude facts.
\begin{enumerate}
\item[(a)] $\operatorname{PrAmp}_A[c,d]$ is closed under arbitrary coproducts
      and direct summands in $\D{A}$.
\item[(b)] Suppose that an object $Y$ has a finite Postnikov filtration whose
      factors in filtration degrees $i\in[p,q]$ belong to
      $\operatorname{PrAmp}_A[c+i,d+i]$.  Then
$ Y\in\operatorname{PrAmp}_A[c+p,d+q]$.
\end{enumerate}
For completeness, \textup{(a)} follows from the standard projective
amplitude criterion
\begin{equation}\label{eq:projective-amplitude-criterion}
 Y\in\operatorname{PrAmp}_A[c,d]
\hspace{1mm}\Longleftrightarrow\hspace{1mm}
 Y\in\Dint{A},\hspace{1mm}\text{and}\hspace{1mm}
 \Hom_{\D{A}}(Y,M[j])=0
\end{equation}
for all $M\in A\Modcat, j\notin[-d,-c]$.
The forward implication is obtained by applying
$\Hom_A^\bullet(-,M)$ to a projective representative supported in
$[c,d]$.  

Conversely, take a bounded-above projective resolution of $Y$.
The cohomological bound truncates it above $d$.  Successive dimension
shifting, using the vanishing in
\eqref{eq:projective-amplitude-criterion}, shows that the final cokernel
at degree $c$ is projective; replacing the lower tail by this cokernel
gives a projective representative supported in $[c,d]$.
Both conditions on the right of
\eqref{eq:projective-amplitude-criterion} are inherited by retracts.
They are also preserved by coproducts of the objects considered here:
cohomology commutes with coproducts, and the coproduct of the chosen
projective representatives is again supported in $[c,d]$.  This proves
\textup{(a)}.

To prove \textup{(b)}, proceed along the filtration.  If
\[
                         Y'\longrightarrow Y\longrightarrow Z
                         \longrightarrow Y'[1]
\]
is one of its triangles, choose bounded projective representatives of
$Y'$ and $Z$.  The connecting morphism $Z[-1]\to Y'$ is represented by
a chain map because $Z[-1]$ is $K$-projective, and $Y$ is represented by
its mapping cone.  In degree $t$ that cone is the direct sum of the
degree-$t$ terms of the chosen models of $Y'$ and $Z$.  Its support is
therefore contained in the union of their supports.  Induction on the
length of the filtration proves \textup{(b)}.

If $Q$ is a projective $\Gamma$-module, then $Q$ is a direct summand of
some free module $\Gamma^{(I)}$.  Since $X\otimesL_\Gamma-$ preserves coproducts
and direct summands,
$ X\otimesL_\Gamma Q$
is a direct summand of $X\otimesL_\Gamma (\Gamma^{(I)})\simeq(K^\bullet)^{(I)}$.
The chosen $r$-term representative of $K^\bullet$ is supported in
$[-(r-1),0]$.  Fact \textup{(a)} therefore gives
$ X\otimesL_\Gamma  Q\in \operatorname{PrAmp}_A[-(r-1),0]$.

Choose a projective resolution $Q^\bullet\to U$ with
$Q^i=0$ outside $[-s,0]$.  Its brutal truncations give a finite Postnikov
filtration of $U$ whose degree-$i$ factor is $Q^i[-i]$.  Applying the
triangle functor $X\otimesL_\Gamma-$ gives a finite Postnikov filtration of
$P_U^\bullet=X\otimesL_\Gamma U$ with factors
$ X\otimesL_\Gamma Q^i[-i]$, where $-s\leq i\leq0$.

By $ X\otimesL_\Gamma  Q\in \operatorname{PrAmp}_A[-(r-1),0]$, the factor in degree $i$
has a projective model supported in
$[-(r-1)+i,i]$.
Fact \textup{(b)} now yields
$P_U^\bullet\in \operatorname{PrAmp}_A[-(r-1)-s,0]$.
Thus $P_U^\bullet$ has a projective representative with at most
$r+s$ nonzero terms.  Notice that this argument uses only the exact
equivalence $X\otimesL_\Gamma-$ and the projective amplitude of
$X\otimesL_\Gamma\Gamma=K^\bullet$; it does not strictify the derived right
$\Gamma$-action on $K^\bullet$.
Finally, derived associativity gives
$T_U\otimesL_{B_U} M\simeq X\otimesL_\Gamma (U\otimesL_{B_U} M)$.
Thus the following triangle of functors commutes up to the displayed
natural isomorphism:
\[
\xymatrix@C=5.0em@R=3.0em{
\D{B_U}\ar[r]^-{U\otimesL_{B_U}-}
  \ar[dr]_-{T_U\otimesL_{B_U}-}
  &\D{\Gamma}\ar[d]^-{X\otimesL_\Gamma-}\\
  &\D{A}.
}
\]
Since $X\otimesL_\Gamma-$ is an equivalence, this proves
$\Ker(T_U\otimesL_{B_U}-)=\Ker(U\otimesL_{B_U}-)$.  Intersecting the kernel with
$B_U\Modcat$ gives
$$\cE_U=
 \{M\in B_U\Modcat\mid
   \Tor^{B_U}_i(U,M)=0\text{ for every }i\ge0\},$$ and
Theorem~\ref{thm:ordinary-final} completes the proof.
\overpr

\begin{Prop}\label{prop:idempotent-chain-n-term}
Let $A$ be a ring and let
$1=e_1+\cdots+e_n$
be a decomposition of the identity into pairwise orthogonal idempotents.
Put $P_i=Ae_i$.  Suppose that, for every $1\leq i<n$, there is an
injective $A$-homomorphism
$u_i:P_{i+1}\longrightarrow P_i$.
For every $1\leq i<n$, 
define
$L_i=\operatorname{Coker}(u_i)$,
$L_n=P_n$,
and set
$ K_n^\bullet=\bigoplus_{i=1}^{n}L_i[i-1]$.
Equivalently, $K_n^\bullet$ is represented by the $n$-term projective
complex
\begin{equation}\label{eq:idempotent-chain-complex}
 K_n^\bullet\simeq
 \bigoplus_{i=1}^{n-1}
 \bigl(P_{i+1}\xrightarrow{u_i}P_i\bigr)[i-1]
 \oplus P_n[n-1],
\end{equation}
concentrated in degrees $-(n-1),\ldots,0$.

Let $I$ be an infinite set and put
$T_n^\bullet=(K_n^\bullet)^{(I)}$.
Then $T_n^\bullet$ is an $n$-term big tilting complex if and only if
\begin{align}
 \Hom_A(L_i,L_j)&=0
 &&\text{for all }i\ne j,                              \label{eq:chain-hom}\\
 \Ext_A^1(L_i,L_j)&=0
 &&\text{whenever }j\ne i+1.                           \label{eq:chain-ext}
\end{align}
The adjacent groups $\Ext_A^1(L_i,L_{i+1})$ are allowed to be nonzero.
If, in addition, $L_i\ne0$ for every $i$, then $T_n^\bullet$ is genuinely
$n$-term: even up to an overall shift, it is not isomorphic in $\D{A}$
to a complex supported in fewer than $n$ consecutive degrees.
\end{Prop}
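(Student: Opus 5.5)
The plan is to reduce everything to a computation on the compact seed $K_n^\bullet$. Three preliminary observations do most of the work.

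First, each $P_i=Ae_i$ is a finitely generated projective module, so the complex \eqref{eq:idempotent-chain-complex} lies in $\Kb{\pmodcat A}$. Hence $K_n^\bullet$ is compact, and it is supported in degrees $-(n-1),\ldots,0$. The same then holds for the complex $T_n^\bullet=(K_n^\bullet)^{(I)}$, which is a bounded complex of projectives in these degrees.

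Second, the condition $A\in\thick_{\D{A}}(T_n^\bullet)$ of Definition~\ref{Big tilting} is automatic, with no hypothesis on the $L_i$. I would argue exactly as in Example~\ref{ex:three-term-big-tilting}. We have $P_n=L_n\in\thick(K_n^\bullet)$. Since $u_i$ is injective, the exact sequence $0\to P_{i+1}\to P_i\to L_i\to 0$ yields a triangle $P_{i+1}\to P_i\to L_i\to P_{i+1}[1]$. Descending induction on $i$ then gives $P_i\in\thick(K_n^\bullet)$ for all $i$, so $A=\bigoplus_iP_i\in\thick(K_n^\bullet)\subseteq\thick(T_n^\bullet)$, because $K_n^\bullet$ is a direct summand of $T_n^\bullet$.

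Third, the same short exact sequences show that $\pd L_i\le 1$ for $i<n$ and that $L_n$ is projective. Therefore $\Ext_A^q(L_i,L_j)=0$ for all $q\ge 2$.

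With these in hand, $T_n^\bullet$ is big tilting if and only if condition (1) of Definition~\ref{Big tilting} holds. By compactness of $K_n^\bullet$, exactly as in the displayed computation of Example~\ref{ex:three-term-big-tilting},
\[
\Hom_{\D{A}}\bigl(T_n^\bullet,(T_n^\bullet)^{(J)}[r]\bigr)\simeq\prod_{I}\bigoplus_{I\times J}\Hom_{\D{A}}(K_n^\bullet,K_n^\bullet[r]).
\]
This group vanishes for all sets $J$ if and only if $\Hom_{\D{A}}(K_n^\bullet,K_n^\bullet[r])=0$; for the converse direction take $J$ a singleton, noting that $I\neq\emptyset$. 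So the claim reduces to showing that $K_n^\bullet$ is self-orthogonal if and only if \eqref{eq:chain-hom} and \eqref{eq:chain-ext} hold.

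To prove this reduced claim, decompose
\[
\Hom_{\D{A}}(K_n^\bullet,K_n^\bullet[r])=\bigoplus_{i,j}\Ext_A^{\,j-i+r}(L_i,L_j).
\]
Write $q=j-i+r$. Only $q\in\{0,1\}$ can contribute, and we need to know when such a summand sits in a degree $r\neq 0$.
\begin{itemize}
\item For $q=0$ we get $r=i-j$, which is nonzero exactly when $i\neq j$. This is controlled by \eqref{eq:chain-hom}.
\item For $q=1$ we get $r=i-j+1$, which is nonzero exactly when $j\neq i+1$; this includes the case $j=i$, that is, $\Ext^1_A(L_i,L_i)$. This is controlled by \eqref{eq:chain-ext}.
\end{itemize}
Since each summand is a direct summand of the total Hom group, self-orthogonality is equivalent to the vanishing of precisely these groups. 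This gives both directions at once. It also shows why the adjacent groups $\Ext^1_A(L_i,L_{i+1})$ may be nonzero: they contribute only in degree $r=0$.

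For the final assertion, note that the $i$-th summand of \eqref{eq:idempotent-chain-complex} has cohomology $L_i$ concentrated in degree $-(i-1)$. Hence $H^{-(i-1)}(T_n^\bullet)\simeq L_i^{(I)}$, which is nonzero for $1\le i\le n$ when every $L_i\ne 0$. A complex supported in fewer than $n$ consecutive degrees, even after an overall shift, has cohomology in fewer than $n$ consecutive degrees. Since the cohomological support is an invariant of the isomorphism class in $\D{A}$, $T_n^\bullet$ cannot be represented by such a complex.

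I expect no serious obstacle here. The only point needing care is the bookkeeping of the index $r=i-j+q$, so as not to miss the self-extension case $j=i$, $q=1$, together with the direction ``big tilting $\Rightarrow$ conditions'', which is handled by taking $J$ to be a singleton.
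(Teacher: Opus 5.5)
Your proof is correct and follows essentially the same route as the paper. Both arguments use the projective resolutions $0\to P_{i+1}\to P_i\to L_i\to0$ to get $\Ext^q_A(L_i,L_j)=0$ for $q\ge2$. The shifted-Ext identification then reduces self-orthogonality of $K_n^\bullet$ to exactly the vanishing conditions \eqref{eq:chain-hom} and \eqref{eq:chain-ext}. Compactness transfers this to $T_n^\bullet$, descending induction gives generation, and the cohomology $L_i^{(I)}$ in each degree $-(i-1)$ gives genuineness. The only difference is cosmetic: for the converse you take $J$ a singleton in the coproduct formula, whereas the paper uses that each $L_i[i-1]$ is a direct summand of $T_n^\bullet$, and these amount to the same thing.
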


{\it Proof}.
For $1\leq i<n$, the defining exact sequence
\[
 0\longrightarrow P_{i+1}\lraf{u_i}P_i
 \longrightarrow L_i\longrightarrow0
\]
is a projective resolution of $L_i$, while $L_n=P_n$ is projective.
Consequently,
\[
 \operatorname{pd}_A L_i\leq1
 \quad\text{and}\quad
 \Ext_A^q(L_i,L_j)=0\quad(q\geq2).
\]
For all $i,j$ and $r\in\mathbb Z$, one has
\begin{equation}\label{eq:chain-shifted-ext}
 \Hom_{\D{A}}
 \bigl(L_i[i-1],L_j[j-1+r]\bigr)
 \simeq
 \Ext_A^{\,j-i+r}(L_i,L_j).
\end{equation}

Assume first that \eqref{eq:chain-hom} and \eqref{eq:chain-ext} hold.
If the right-hand side of \eqref{eq:chain-shifted-ext} is a degree-zero
Hom group, it can be nonzero only when $i=j$, which forces $r=0$.
If it is a degree-one extension group, it can be nonzero only when
$j=i+1$, which again forces $r=0$.  Therefore,
$\Hom_{\D{A}}(K_n^\bullet,K_n^\bullet[r])=0$ for all $r\ne0$.

The complex $K_n^\bullet$ is a bounded complex of finitely generated
projective modules and hence is compact.  For every set $J$ and every
$r\ne0$, compactness gives
\[
 \Hom_{\D{A}}
 \bigl(T_n^\bullet,(T_n^\bullet)^{(J)}[r]\bigr)
\simeq
 \prod_{\lambda\in I}
 \Hom_{\D{A}}
 \bigl(K_n^\bullet,(K_n^\bullet)^{(I\times J)}[r]\bigr)
\simeq
 \prod_{\lambda\in I}
 \bigoplus_{\mu\in I\times J}
 \Hom_{\D{A}}(K_n^\bullet,K_n^\bullet[r])=0.
\]
It remains to verify generation.  Since $L_n=P_n$, one has
$P_n\in\thick(K_n^\bullet)$.  The triangles induced by
\[
 P_{i+1}\longrightarrow P_i\longrightarrow L_i
 \longrightarrow P_{i+1}[1]
\]
show, by descending induction, that
$P_i\in\thick(K_n^\bullet)$ for every $i$.  Hence
$A=\bigoplus_{i=1}^{n}P_i\in\thick(K_n^\bullet)$.
Since $K_n^\bullet$ is a direct summand of
$T_n^\bullet=(K_n^\bullet)^{(I)}$, it follows that
$A\in\thick(T_n^\bullet)$.  Thus $T_n^\bullet$ is big tilting.

Conversely, suppose that $T_n^\bullet$ is big tilting.  Each
$L_i[i-1]$ is a direct summand of $K_n^\bullet$, and $K_n^\bullet$ is a
direct summand of $T_n^\bullet$.  Its self-orthogonality therefore
implies
\[
 \Hom_{\D{A}}
 \bigl(L_i[i-1],L_j[j-1+r]\bigr)=0
 \qquad(r\ne0).
\]
Taking $r=i-j$ gives \eqref{eq:chain-hom}; taking
$r=i-j+1$ gives \eqref{eq:chain-ext}.

Finally,
$H^{-(i-1)}(T_n^\bullet)\simeq L_i^{(I)}$ for all $1\leq i\leq n$.
If every $L_i$ is nonzero, then $T_n^\bullet$ has nonzero cohomology in
all degrees $-(n-1),\ldots,0$ and hence cannot be represented by a
complex supported in fewer than $n$ consecutive degrees.
\overpr

\begin{Rem}
Example~\ref{ex:three-term-big-tilting} is the basic concrete instance of
Proposition~\ref{prop:idempotent-chain-n-term}.  For
$A_n=\operatorname{LT}_n(R)$, the diagonal modules satisfy
\[
 \Hom_{A_n}(L_i,L_j)=0\quad(i\ne j),
 \qquad
 \Ext_{A_n}^1(L_i,L_j)=0\quad(j\ne i+1)
\]
by \eqref{eq:matrix-ext-table}.  Hence the proposition recovers the
genuine non-compact $n$-term complexes constructed there.
\end{Rem}

\begin{Ex}
\label{ex:transported-localisation}
Let $A=\operatorname{LT}_3(\mathbb Z)$, put $P_i=Ae_i$, and let
$u_i:P_{i+1}\to P_i$ be induced by the matrix unit $E_{i+1,i}$.
Set $L_i=\operatorname{Coker}(u_i)$ for $i=1,2$ and $L_3=P_3$.
As in Example~\ref{ex:three-term-big-tilting}, the object
$ Q^\bullet=L_1\oplus L_2[1]\oplus L_3[2]$
is a compact three-term tilting complex.  Put
$\Gamma=\End_{\D{A}}(Q^\bullet),
 \Sigma=\mathbb Q\otimes_{\mathbb Z}\Gamma
$.
Let $X\in\D{A\textcolor[rgb]{1.00,0.00,0.00}{\otimes_{\IZ}}\Gamma^{\opp}}$ be the corresponding two-sided tilting
complex.
Central localisation gives an injective flat ring epimorphism
$\Gamma\to\Sigma$.  A telescope presentation gives
$\operatorname{pd}_\Gamma\Sigma\le1$, and the standard module $U=\Sigma\oplus\Sigma/\Gamma$
is a non-compact good one-tilting $\Gamma$-module, with good tilting
sequence
\[
              0\longrightarrow\Gamma\longrightarrow\Sigma
                \longrightarrow\Sigma/\Gamma\longrightarrow0.
\]
Proposition~\ref{prop:derived-transport} therefore produces a
non-compact big tilting complex
$P_U^\bullet=X\otimesL_\Gamma U$
over $A$, with a projective model having at most four nonzero terms.
This example is not obtained by taking infinitely many copies of a
compact tilting complex.  Moreover,
\[
 X\otimesL_\Gamma\Sigma
 \simeq
 Q^\bullet\otimes_{\mathbb Z}\mathbb Q
 \simeq
 (L_1\otimes_{\mathbb Z}\mathbb Q)
 \oplus(L_2\otimes_{\mathbb Z}\mathbb Q)[1]
 \oplus(L_3\otimes_{\mathbb Z}\mathbb Q)[2].
\]
Thus $P_U^\bullet$ has nonzero cohomology in more than one degree and is
not, up to shift, a projective resolution of a module.

Let $B_U=\End_\Gamma(U)$.  If
$\pi:\Sigma\to\Sigma/\Gamma$ is the quotient map, then
\[
 \theta=\Hom_\Gamma(U,\pi):
 \Hom_\Gamma(U,\Sigma)\longrightarrow
 \Hom_\Gamma(U,\Sigma/\Gamma)
\]
is a morphism between finitely generated projective $B_U$-modules.  The
one-tilting universal-localisation theorem gives a homological ring
epimorphism $B_U\to(B_U)_\theta$ and identifies
$\cE_U=\operatorname{Im}((B_U)_\theta\Modcat\ra B_U\Modcat)$.
Hence Theorem~\ref{thm:ordinary-final} specialises to
\[
 \D{(B_U)_\theta}\simeq\D{\cE_U}
 \lraf{\simeq}
 \Ker\!(
 T_U\otimesL_{B_U}-:
 \D{B_U}\longrightarrow\D{A}).
\]
\end{Ex}

\medskip
\subsection{A direct-product construction in arbitrary length}
\label{subsec:finite-product}

We conclude this section with a construction which produces genuine
$n$-term big tilting complexes. In particular, no auxiliary infinite coproduct indexed by
an additional set $I$ is used.

\begin{Prop}\label{prop:finite-product-n-term}
Let $n\geq2$.  For each $1\leq r\leq n-1$, let $A_r$ be a ring and let
$e_r\in A_r$ be an idempotent such that $e_rA_r$ is infinitely generated
as a left $e_rA_re_r$-module.  Choose a set \textcolor[rgb]{1.00,0.00,0.00}{$I_r$} and an epimorphism
$(e_rA_re_r)^{(I_r)}\ra e_rA_r$.
As in the two-term idempotent construction, this gives an exact sequence
\[
 (A_re_r)^{(I_r)}
 \lraf{\varphi_r} A_r
 \longrightarrow A_r/A_re_rA_r
 \longrightarrow0
\]
and a complex of projective left $A_r$-modules
\[
 P_r^\bullet=
 \left(
 0\longrightarrow
 Q_r:=(A_re_r)^{(I_r)}\oplus A_re_r
 \xrightarrow{
 \omega_r=\left(
 \begin{smallmatrix}{}
                         \varphi_r \\
                         0 \\
                       \end{smallmatrix}
                     \right)}
 A_r\longrightarrow0
 \right),
 \qquad
 Q_r\text{ in degree }-1.
\]
Put
$A=\prod_{r=1}^{n-1}A_r$
and regard $P_r^\bullet$ as a complex of left $A$-modules supported on
the $r$-th direct factor.  Then
$
 T_n^\bullet
 :=
 \bigoplus_{r=1}^{n-1}P_r^\bullet[r-1]
 \label{eq:finite-product-Tn}
$
is an $n$-term big tilting complex over $A$ if and only if
\[
 \Hom_{A_r}\!\left(A_r/A_re_rA_r,A_re_r\right)=0
 \qquad
 \text{for every }1\leq r\leq n-1.
 \label{eq:factor-idempotent-criterion}
\]
If, in addition,
$A_1/A_1e_1A_1\neq0$
and
$ A_{n-1}e_{n-1}\neq0
 \label{eq:genuine-amplitude-criterion}
$,
then $T_n^\bullet$ has nonzero cohomology in degrees $0$ and $-(n-1)$.
Consequently it is not isomorphic in $\D{A}$ to a complex supported in
fewer than $n$ consecutive degrees.
\end{Prop}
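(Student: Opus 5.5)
The plan is to reduce everything to the individual factors $A_r$, and then to check the two defining conditions of Definition~\ref{Big tilting} for the two-term complexes $P_r^\bullet$ one at a time.

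\emph{Step 1: reduction to the factors.} Since $A=\prod_{r}A_r$ is a finite product, there are central idempotents $c_r$ with $A_r=Ac_r$. Hence $\D{A}\simeq\prod_r\D{A_r}$. Under this equivalence, morphisms between objects supported on different factors vanish, and coproducts, shifts and $\thick$ are computed factorwise. For an index set $\alpha$ and $i\neq0$ this gives
\[
\Hom_{\D{A}}\bigl(T_n^\bullet,(T_n^\bullet)^{(\alpha)}[i]\bigr)\simeq\prod_{r=1}^{n-1}\Hom_{\D{A_r}}\bigl(P_r^\bullet,(P_r^\bullet)^{(\alpha)}[i]\bigr).
\]
The shifts $[r-1]$ cancel inside each factor. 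Likewise ${}_AA=\bigoplus_rA_r\in\thick(T_n^\bullet)$ if and only if $A_r\in\thick_{\D{A_r}}(P_r^\bullet)$ for every $r$. The term bound holds because $P_r^\bullet[r-1]$ is supported in $[-r,-(r-1)]\subseteq[-(n-1),0]$. Thus $T_n^\bullet$ is an $n$-term big tilting complex if and only if every $P_r^\bullet$ is a two-term big tilting complex over $A_r$.

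\emph{Step 2: self-orthogonality of one $P^\bullet=P_r^\bullet$.} I drop the index $r$ and put $M=H^0(P^\bullet)=A/AeA$. From the definition of $\omega$ one gets $H^{-1}(P^\bullet)=\Ker\varphi\oplus Ae$. Because $P^\bullet$ and $X=(P^\bullet)^{(\alpha)}$ are two-term, only $i=\pm1$ can contribute.

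For $i=1$, truncation gives $\Hom(P^\bullet,X[1])\simeq\Hom(P^\bullet,H^0(X)[1])$. This group is a quotient of $\Hom_A(Q,M^{(\alpha)})$. Since $Q\in\Add(Ae)$ and $\Hom_A(Ae,M^{(\alpha)})=eM^{(\alpha)}=0$, it vanishes unconditionally.

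For $i=-1$, one gets $\Hom(P^\bullet,X[-1])\simeq\Hom_A\bigl(M,H^{-1}(X)\bigr)$, and $H^{-1}(X)=(\Ker\varphi\oplus Ae)^{(\alpha)}$. Since $M$ is cyclic, $\Hom_A(M,-)$ commutes with coproducts. Since $\Ker\varphi\subseteq(Ae)^{(I)}$, the condition $\Hom_A(A/AeA,Ae)=0$ kills this group. Conversely, if $\Hom_A(A/AeA,Ae)\neq0$, composing with the inclusion of the summand $Ae\subseteq H^{-1}(P^\bullet)$ yields a nonzero element of $\Hom(P^\bullet,P^\bullet[-1])$. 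This proves necessity of the stated condition.

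\emph{Step 3: generation, the main obstacle.} I must show $A\in\thick(P^\bullet)$ regardless of the Hom condition. The complex splits as $P^\bullet\simeq C^\bullet\oplus Ae[1]$ with $C^\bullet=\bigl((Ae)^{(I)}\xrightarrow{\varphi}A\bigr)$, so $Ae\in\thick(P^\bullet)$. The triangle $(Ae)^{(I)}\to A\to C^\bullet\to$ then shows it suffices to get the infinite coproduct $(Ae)^{(I)}$ into $\thick(P^\bullet)$. This is exactly the delicate point of the two-term idempotent construction invoked in the statement.

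My first attempt is to import that verification verbatim from the two-term case. Failing that, I would use an Eilenberg-swindle choice of $\varphi$. Take $I$ with $I\simeq I\sqcup\mathbb N$, which is possible since $eA$ is infinitely generated. Arrange that $\varphi$ restricted to a copy of $(Ae)^{(I)}$ is itself a split summand of $\varphi$. The cone $C^\bullet$ then absorbs $(Ae)^{(I)}[1]$ as a summand up to a contractible piece, so $(Ae)^{(I)}\in\thick(P^\bullet)$.

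\emph{Step 4: genuine amplitude.} By Step 1, $H^0(T_n^\bullet)\supseteq H^0(P_1^\bullet)=A_1/A_1e_1A_1\neq0$. Also $H^{-(n-1)}(T_n^\bullet)=H^{-1}(P_{n-1}^\bullet)\supseteq A_{n-1}e_{n-1}\neq0$. Since cohomological support is an invariant in $\D{A}$, any representative, even after an overall shift, must meet $n$ consecutive degrees.
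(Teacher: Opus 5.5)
\paragraph{Verdict.} Your Steps 1, 2 and 4 are correct; Step 2 spells out the two-term Hom criterion that the paper only cites. Step 3 is a genuine gap, however, and for the data as stated it cannot be filled.

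\paragraph{Why generation can fail.} Importing generation ``verbatim from the two-term case'' is exactly what the paper's proof does (``the two-term construction also gives $A_r\in\thick_{\D{A_r}}(P_r^\bullet)$''). Yet this is false for some admissible choices of the epimorphism $\pi_r:(e_rA_re_r)^{(I_r)}\to e_rA_r$.

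To see this, apply the triangle functor $X\mapsto e_rX=\rHom_{A_r}(A_re_r,X)$ from $\D{A_r}$ to $\D{e_rA_re_r}$. Two facts hold:
\begin{itemize}
\item $e_r\varphi_r=\pi_r$;
\item $e_r(A_r/A_re_rA_r)=0$.
\end{itemize}
Hence $e_rP_r^\bullet$ has cohomology only in degree $-1$, so $e_rP_r^\bullet\simeq(\Ker\pi_r\oplus e_rA_re_r)[1]$. Therefore $A_r\in\thick(P_r^\bullet)$ would force $e_rA_r\in\thick_{\D{e_rA_re_r}}(\Ker\pi_r\oplus e_rA_re_r)$.

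If $\pi_r$ is an isomorphism, this thick subcategory is $\Dc{e_rA_re_r}$. Then $e_rA_r$ would be finitely generated over $e_rA_re_r$, contradicting the standing hypothesis.

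This is exactly the situation of Example~\ref{ex:endomorphism-product-n-term}: there $R=\End_k(V)$, $\pi$ is induced by a $k$-basis of $eR$, and $\Hom_R(R/ReR,Re)=0$ does hold. So already for $n=2$ the ``if'' direction of the statement fails. Your remark that generation holds ``regardless of the Hom condition'' is right only in the sense that generation is independent of that condition; it does depend on the choice of $\pi_r$.

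\paragraph{The repair.} Your Eilenberg-swindle option is the right fix, but it proves a modified statement. As written, it also does not isolate what is needed: ``$I\simeq I\sqcup\mathbb N$'' holds for every infinite set and gives nothing by itself.

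What works is to choose $I_r=I_r'\sqcup I_r''$ with:
\begin{itemize}
\item $I_r''$ infinite and $|I_r'|\le|I_r''|$;
\item $\pi_r$ surjective on $(e_rA_re_r)^{(I_r')}$ and zero on $(e_rA_re_r)^{(I_r'')}$.
\end{itemize}
Then $\varphi_r=(\varphi_r',0)$. The cone $C_r^\bullet$ of $\varphi_r$ contains $(A_re_r)^{(I_r'')}[1]\simeq(A_re_r)^{(I_r)}[1]$ as a direct summand. The triangle $(A_re_r)^{(I_r)}\xrightarrow{\varphi_r}A_r\to C_r^\bullet\to(A_re_r)^{(I_r)}[1]$ then gives $A_r\in\thick(P_r^\bullet)$.

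With this choice written into the hypotheses, your Steps 1--4 give a complete proof. Without it, the ``if'' direction of the statement is not true in general.
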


{\it Proof}.
Let $\varepsilon_r\in A$ be the central idempotent corresponding to the
$r$-th direct factor.  The category of left $A$-modules is the finite
product of the categories of left $A_r$-modules.  For every set $J$ and
every $j\in\mathbb Z$, the finite direct-sum decomposition gives
\[
 \Hom_{\K{A}}
 \bigl(P_r^\bullet[r-1],
       (P_s^\bullet)^{(J)}[s-1+j]\bigr)=0
 \qquad (r\neq s)
\]
and, when $r=s$, the shifts cancel.  Consequently
\[
 \Hom_{\D{A}}
 \bigl(T_n^\bullet,(T_n^\bullet)^{(J)}[j]\bigr)
\simeq
 \prod_{r=1}^{n-1}
 \Hom_{\D{A_r}}
 \bigl(P_r^\bullet,(P_r^\bullet)^{(J)}[j]\bigr).
\]
The two-term idempotent criterion therefore shows that condition
\textup{(T1)} of Definition~\ref{Big tilting} holds for
$T_n^\bullet$ if and only if
\eqref{eq:factor-idempotent-criterion} holds.  Conversely, one can also
see the necessity directly: $P_r^\bullet[r-1]$ is a direct summand of
$T_n^\bullet$, and $(P_r^\bullet)^{(J)}[r-1]$ is a direct summand of
$(T_n^\bullet)^{(J)}$.

Under \eqref{eq:factor-idempotent-criterion}, the two-term construction
also gives
$ A_r\in\thick_{\D{A_r}}(P_r^\bullet)$ for all $1\leq r\leq n-1$.
After inflation along the $r$-th factor, shifts and direct summands show
that every factor module $\varepsilon_rA$ belongs to
$\thick_{\D(A)}(T_n^\bullet)$.  Hence their finite direct sum
$A=\bigoplus_{r=1}^{n-1}\varepsilon_rA$ also belongs to this thick
subcategory.  This proves condition \textup{(T2)} and hence the big
tilting assertion.

Finally,
$ H^0(T_n^\bullet)
 \supseteq H^0(P_1^\bullet)
 \simeq A_1/A_1e_1A_1$,
whereas the zero second component of $\omega_{n-1}$ gives
$ H^{-(n-1)}(T_n^\bullet)\supseteq A_{n-1}e_{n-1}$.
Hence \eqref{eq:genuine-amplitude-criterion} forces nonzero cohomology at
both endpoints.  Cohomology is invariant under isomorphism in $\D{A}$,
which proves the last assertion.
\overpr

\begin{Ex}\label{ex:endomorphism-product-n-term}
Let $k$ be a field, let $V$ be a countably infinite-dimensional
$k$-vector space, and put
$R=\End_k(V)$.
Fix $0\neq v_0\in V$ and $\lambda\in V^*$ with $(v_0)\lambda=1$, and
define the rank-one idempotent $e\in R$ by 
$(v)e=(v)\lambda v_0$ for all $v\in V$.
Then $eRe\cong k$, while $eR\cong V^*$ as left $k$-vector spaces.
In particular, $eR$ is infinitely generated over $eRe$.  Moreover,
$ReR=\mathcal F(V)=\{ f \in \End_k(V) \mid \dim_k(\Img f) < \infty \}$,
the ideal of finite-rank endomorphisms of $V$, and therefore
$R/ReR\neq0$.

We also have
$
 \Hom_R(R/ReR,Re)=0.
 \label{eq:endomorphism-hom-vanishing}
$
Indeed, an $R$-homomorphism $R/ReR\to Re$ is determined by an element
$x\in Re$ annihilated on the left by $ReR$.  Write $x=ae$.  If $x\neq0$,
then $w=(v_0)a\neq0$.  Choose $b\in R$ such that $(w)b=v_0$.  Since
$eb\in ReR$, we obtain
$ (eb)x=ebae=e\neq0$,
contradicting $(ReR)x=0$.  This proves
\eqref{eq:endomorphism-hom-vanishing}.

Choose a $k$-basis $\Gamma$ of $eR$.  The associated epimorphism gives
$\varphi:(Re)^{(\Gamma)}\longrightarrow ReR\subseteq R$.
Set
\[
 Q=(Re)^{(\Gamma)}\oplus Re,
 \qquad
 P^\bullet=
 \left(
 0\longrightarrow Q
 \xrightarrow{\omega=
 \left(
 \begin{smallmatrix}{}
                         \varphi \\
                         0 \\
                       \end{smallmatrix}
                     \right)}
 R\longrightarrow0
 \right).
\]
By \eqref{eq:endomorphism-hom-vanishing} and the two-term idempotent
criterion, $P^\bullet$ is a two-term big tilting complex over $R$.

For a completely explicit three-term example, take $A=R\times R$ and
let $P_1^\bullet$ and $P_2^\bullet$ denote the copies of $P^\bullet$
supported on the first and second factors.  Then
$
 T_3^\bullet=P_1^\bullet\oplus P_2^\bullet[1]
$
is represented by
\[
 0\longrightarrow
 (0,Q)
 \xrightarrow{\,d^{-2}\,}
 (Q,0)\oplus(0,R)
 \xrightarrow{\,d^{-1}\,}
 (R,0)
 \longrightarrow0,
\]
where
\[
(0,q)d^{-2}=(0,(0,(q)\omega)),
 \qquad
((q,0),(0,r))d^{-1}=((q)\omega,0).
\]
It is genuinely three-term: its degree-zero cohomology contains
$(R/ReR,0)\neq0$, and its degree-$-2$ cohomology contains
$(0,Re)\neq0$.

More generally, for every $n\geq2$, take
\[
 A^{(n)}=R^{\,n-1}
 \qquad\text{and}\qquad
 T_n^\bullet=\bigoplus_{r=1}^{n-1}P_r^\bullet[r-1].
\]
Proposition~\ref{prop:finite-product-n-term} shows that
$T_n^\bullet$ is a genuine $n$-term big tilting complex over $A^{(n)}$.
The basis $\Gamma$ is intrinsic to the free presentation of the
infinitely generated module $eR$; no further infinite indexing set is
introduced in passing from the two-term building block to arbitrary
length.
\end{Ex}

\end{document}